\newtheorem{thm}{Theorem} 
\newtheorem{prop}{Proposition}[section]
\newtheorem{cor}[prop] {Corollary} 
\newtheorem{rem}[prop]{Remark} 
\newtheorem{lem}[prop]{Lemma} 
\newtheorem{que}[prop]{Question} 
\newtheorem{defn}[prop]{Definition} 
\newtheorem{exmp}[prop]{Example}
\begin{document}
\title[$C^0$-limits and spectral invariants]{$C^0$-limits of Hamiltonian paths and the Oh-Schwarz spectral invariants }

\author[Sobhan Seyfaddini]{Sobhan Seyfaddini}

\address{University of California Berkeley\\ Berkeley, CA 94720 \\USA}
\email{sobhan@math.berkeley.edu}

\begin{abstract}

{\noindent In this article we study the behavior of the Oh-Schwarz spectral invariants under $C^0$-small perturbations of the Hamiltonian flow.  We show that if two Hamiltonians $G, H$ vanish on a small ball and if their flows are sufficiently $C^0$-close then  $$|\rho(G;a) - \rho(H;a)| \leq C \, d_{C^0}^{path}(\phi^t_G, \phi^t_H).$$

    Using the above result, we prove that if $\phi$ is a sufficiently $C^0$-small Hamiltonian diffeomorphism on a surface of genus $g$ then
   $$\Vert \phi \Vert_{\gamma} \leq C \, (d_{C^0}(Id, \phi))^{2^{-2g-1}}$$
   hence establishing $C^0$-continuity of the spectral norm on surfaces.
   
   We also present applications of the above results to the theory of Calabi quasimorphisms and improve a result of Entov, Polterovich and Py \cite{EPP}.  In the final section of the paper we use our results to answer a question of Y.-G. Oh about spectral Hamiltonian homeomorphisms.
   
}
\end{abstract}

\maketitle

\section{{\bf Introduction}}
  The 1-periodic orbits of a non-degenerate Hamiltonian $H \in C^{\infty}([0,1] \times M)$, which are the critical points of the action functional associated to $H$, generate the Floer chain complex, $CF_*(H)$, of the Hamiltonian $H$; see \cite{Sa} for a survey of Floer theory on a large class of symplectic manifolds.  The important fact that the homology of this complex, denoted by $HF_*(H)$, coincides with the quantum cohomology of $M$, $QH^*(M) = H^*(M) \otimes \Lambda$, is one of the most spectacular achievements of Floer theory.  Using the isomorphism between $QH^*(M)$ and $HF_*(H)$, one defines the spectral invariant associated to $H$ and $a \in QH^*(M) \setminus \{0\}$, denoted by $\rho(H;a)$, to be the minimum action required to represent the quantum cohomology class $a$ in $HF_*(H)$.  Spectral invariants were introduced in the symplectically aspherical case by Schwartz in \cite{Sc} and were extended to general closed symplectic manifolds by Oh \cite{Oh1}.  We will briefly review this construction in section \ref{Spectral Invariants}; see \cite{Oh2} for a full survey.  One interesting application of the spectral invariant $\rho(\cdot ;1)$ is the construction of a bi-invariant norm, called the spectral norm, on $Ham(M)$.  For other interesting applications of spectral invariants we refer the reader to \cite{EP1, EP2, EP3, EP4, Oh2, Oh3}.  
  
  In this paper we study the behavior of the spectral invariants of a Hamiltonian $H$, and the spectral norm of a Hamiltonian diffeomorphism $\psi$ under $C^0$-small perturbations of the flow of $H$ and the diffeomorphism $\psi$, respectively.  In section \ref{Spectral Invariants} we provide a brief introduction to spectral invariants.  Section \ref{Main Results} contains the statements of the main results of the paper.  In section \ref{The Lipschitz Estimate}, we prove the first main theorem of this note: under the assumptions that $H, G$ vanish on a small ball and that $\phi^t_H, \phi^t_G$ are sufficiently $C^0$-close , we will bound $|\rho(H;1) - \rho(G;1)|$ by a multiple of the $C^0$-distance of $\phi^t_{H}$ and $\phi^t_G$.  In section \ref{Surfaces} we compare the spectral norm and the $C^0$-norm on surfaces and show that the spectral norm is continuous with respect to the $C^0$-topology on $Ham$.  In section \ref{Application to the theory of Calabi quasimorphisms}, after briefly introducing the theory of Calabi quasimorphisms, we present applications of our results to this theory.  In the final section of this paper, section \ref{Application to Spectral Hamiltonian Homeomorphisms}, we recall Oh's definition of spectral Hamiltonian homeomorphisms and we answer a question raised by him on this subject.
  
 {\bf Acknowledgments: } This paper grew out of an attempt to better understand an idea of Yong-Geun Oh.  I thank him for his interest in this project.  I would like to express my gratitude to my advisor, Alan Weinstein, for his time, support, and insightful guidance.  I feel heavily indebted to Denis Auroux for generously sharing his time and ideas with me.  I'm grateful to Leonid Polterovich for a very helpful suggestion and for drawing my attention to the results in \cite{EPP}, which play an important role in this paper.  I'm also thankful to Claude Viterbo for pointing out the connection between the results of this paper and an ongoing project of his, and for making a preliminary version of his work available to me.  For interesting comments and stimulating conversations I'd like to thank Lev Buhovsky, Yakov Eliashberg, Viktor Ginzburg, Michael Hutchings and Beno\^it Jubin.

\subsection{Preliminaries on spectral invariants} \label{Spectral Invariants}  
In this section we set our notation and briefly review the necessary background on spectral invariants and Hofer geometry.  For further details on these subjects, we refer the interested reader to \cite{Oh2}, \cite{MS2}, \cite{HZ} and \cite{P}.
    
    Let $(M, \omega)$ denote a closed and connected symplectic manifold.  Any smooth Hamiltonian $H:[0,1] \times M \rightarrow \mathbb{R}$ induces a Hamiltonian flow $\phi^t_H : M \rightarrow M \text{ } (0\leq t \leq 1)$, by integrating the unique time-dependent vector field $X_H$ satisfying $dH_t = \iota_{X_H}\omega$, where $H_t(x) = H(t,x)$.  We denote the space of Hamiltonian flows by $PHam(M, \omega)$.  A Hamiltonian diffeomorphism is by definition any diffeomorphism obtained as the time-1 map of a Hamiltonian flow.  We denote by $Ham(M, \omega)$ the set of all Hamiltonian diffeomorphisms.  We will eliminate the symplectic form $\omega$ from the above notations unless there is a possibility of confusion.
  
   Define 
   $$\Gamma:= \frac{\pi_2(M)}{ker(c_1) \cap ker([\omega])}.$$
   The Novikov ring of $(M, \omega)$ is defined to be
   $$ \displaystyle \Lambda = \{ \sum_{A\in \Gamma}{a_A A }: a_A \in \mathbb{Q}, (\forall C \in \mathbb{R}) (|\{A: a_A \neq 0, \int_{A} {\omega} < C\}| < \infty)\}.$$  
   Let $\Omega_0(M)$ denote the space of contractible loops in $M$.  $\Gamma$ forms the group of deck transformations of a covering $\tilde{\Omega}_0(M) \rightarrow \Omega_0(M)$ called the Novikov covering of $\Omega_0(M)$ which can be described as follows:
   $$\tilde{\Omega}_0(M) = \frac{ \{ [z,u]: z \in \Omega_0(M) , u: D^2 \rightarrow M , u|_{\partial D^2} = z \}}{[z,u] = [z', u'] \text { if } z=z' \text{ and } \bar{u} \# u' = 0 \text{ in } \Gamma},$$
   where $\bar{u} \# u'$ denotes the sphere obtained by gluing $u$ and $u'$ along their common boundary with the orientation on $u$ reversed.
   The action functional, associated to a Hamiltonian $H \in C^{\infty}([0,1]\times M)$, is the map $\mathcal{A}_H:\tilde{\Omega}_0(M) \rightarrow \mathbb{R}$ given by
   $$\mathcal{A}_H([z,u]) =  - \int H(t,z(t))dt \text{ }- \int_{u} \omega.$$
   $Crit(\mathcal{A}_H) = \{ [z,u] : \text{ z is a 1-periodic orbit of } X_H \}$ denotes the set of critical points of $\mathcal{A}_H$.  The action spectrum of $H$ is defined to be the set of critical values of the action functional, i.e., $Spec(H) = \mathcal{A}_H(Crit(\mathcal{A}_H))$.  $Spec(H)$ is a measure zero subset of $\mathbb{R}$.
   
  We say that a Hamiltonian $H$ is non-degenerate if the graph of $\phi^1_H$ intersects the diagonal in $M \times M$ transversally.  The Floer chain complex of (non-degenerate) $H$, $CF_*(H)$, is generated as a module over $\Lambda$ by $Crit(\mathcal{A}_H)$.  The boundary map of this complex is obtained, formally, by counting isolated negative gradient flow lines of $\mathcal{A}_H$.  The homology of this complex, $HF_*(H)$, is naturally isomorphic to $QH^*(M) = H^*(M) \otimes \Lambda$, the quantum cohomology of $M$.  We denote this natural isomorphism by 
   $\Phi : QH^*(M) \rightarrow HF_*(H)$.  
   
   Given $\displaystyle \alpha = \sum_{[z,u] \in Crit(\mathcal{A}_H)}{a_{[z,u]}[z,u]} \in CF_*(H)$ we define the action level of $\alpha$ by
   $$\lambda_H(\alpha) = max\{\mathcal{A}_H([z,u]): a_{[z,u]} \neq  0 \}.$$
   Finally, given a non-zero quantum cohomology class $a$, we define the spectral invariant associated to $H$ and $a$ by
   $$\rho(H;a) = inf \{ \lambda_H(\alpha): [\alpha] = \Phi(a)\},$$
   where $[\alpha]$ denotes the Floer homology class of $\alpha$.  It was shown in \cite{Oh1} that $\rho(H;a)$ is well defined, i.e., it is independent of the auxiliary data (almost complex structure) used to define it and $\rho(H;a) \neq -\infty$.
     
   Thus far we have defined $\rho(H;a)$ for non-degenerate $H$.  Define the $L^{(1,\infty)}$ (or Hofer) norm of a Hamiltonian $K$ by \[ \Vert K  \Vert_{(1,\infty)} = \int_0^1 (\max_x K(t,x) - \min_x K(t,x))\,dt.\]
   The spectral invariants of two non-degenerate Hamiltonians $H_1$, $H_2$ satisfy the following estimate
   $$|\rho(H_1;a) - \rho(H_2;a)| \leq \Vert H_1 - H_2  \Vert_{(1,\infty)}.$$
   This estimate allows us to extend $\rho(\cdot;a)$ continuously to all smooth (in fact continuous) Hamiltonians.  
   
   We will now list, without proof, some properties of $\rho$ which will be used later on.  Recall that if $H, \text{ }G$ are two smooth Hamiltonians with flows $\phi^t_H, \text{ } \phi^t_G$, then the composition of the two flows, $\phi^t_H \circ \phi^t_G$, is a Hamiltonian flow generated by the Hamiltonian $H\#G(t,x) = H(t,x) + G(t, (\phi^t_H)^{-1}(x))$.  One can also check that the Hamiltonian $\bar{H}(t,x) = -H(t, \phi^t_H(x))$ generates the inverse flow $(\phi^{t}_H)^{-1}$.  A Hamiltonian $H$ is said to be normalized if $\displaystyle \int_{M}{H_t \omega^n} = 0$ for each $t\in [0,1]$.  
   
   \begin{prop} \label{Properties of Spectral Invariants}(\cite{Oh1}, \cite{Oh2})\\
    The function $\rho: C^{\infty}([0,1] \times M) \times (QH^*(M) \setminus {0}) \rightarrow \mathbb{R}$ has the following properties:
    \begin{enumerate}
   
    \item  \label{Shift}If $r : [0,1] \rightarrow \mathbb{R}$ is smooth then $$\rho(H+r;a) = \rho(H;a) - \int_{0}^{1}{r(t) dt}.$$
    \item \label{Normalization}(Normalization) $\rho(0;1) =0$.
    \item \label{Symplectic Invariance}(Symplectic Invariance) $\rho(\eta^*H; \eta^*a) = \rho(H;a)$ for any symplectomorphism $\eta$.
    \item \label{Triangle Inequality}(Triangle Inequality) $\rho(H\#G; a*b) \leq \rho(H;a) + \rho(G;b)$ where $*$ denotes the quantum product in $QH^*(M)$.
    \item \label{Continuity}($L^{(1,\infty)}-continuity$) 
    
             $\displaystyle - \int_{0}^{1} \max_{ M} (H_t - G_t) dt \leq  \rho(H;a) - \rho(G;a)
             \leq - \int_{0}^{1} \min_{M} (H_t - G_t) dt$, and in particular
             
             $\displaystyle - \int_{0}^{1} \max_{M} H_t \, dt  \leq  \rho(H;1)\leq - \int_{0}^{1} \min_{M} H_t \, dt $.
    \item \label{Spectrality}(Non-degenerate Spectrality) $\rho(H;a) \in Spec(H)$ for non-degenerate $H$.
    \item \label{Homotopy Invariance}(Homotopy Invariance) Assume that $\phi^1_H = \phi^1_G$ and that the Hamiltonian paths $\phi^t_H$ and $\phi^t_G$ are homotopic rel. endpoints.  If $H$ and $G$ are normalized, then $\rho(H;a) = \rho(G;a)$.
    \end{enumerate} 
   
   \end{prop}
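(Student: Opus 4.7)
The plan is to treat the seven properties in three groups. Elementary chain-level identifications handle (1), (2), (3); continuation and pair-of-pants morphisms of Floer theory handle (4), (5); and structural arguments about the action spectrum and the universal cover handle (6), (7).

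For the shift (1), a time-dependent constant $r(t)$ does not alter $dH_t$ and so $X_H = X_{H+r}$; the complex $CF_*(H+r)$ equals $CF_*(H)$ on the nose, while the action functional is uniformly shifted by $-\int_0^1 r(t)\,dt$, hence so is $\rho$. For the symplectic invariance (3), the correspondence $[z,u] \mapsto [\eta^{-1}\circ z, \eta^{-1} \circ u]$ is an action-preserving bijection of Novikov lifts of 1-periodic orbits, and the isomorphism $\Phi$ is natural under symplectomorphisms, giving the claim.

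Properties (4) and (5) follow from the usual chain-level Floer morphisms. For (5), I would use the monotone continuation map $CF_*(H) \to CF_*(G)$ associated to a linear homotopy from $H$ to $G$; the standard energy estimate bounds the action shift by $\int_0^1 \max(H_t - G_t)\,dt$, and swapping $H, G$ gives the two-sided bound. For (4), the pair-of-pants product $CF_*(H) \otimes CF_*(G) \to CF_*(H\#G)$, whose count of Floer trajectories on the thrice-punctured sphere induces the quantum product under $\Phi$, satisfies $\lambda_{H\#G}(\alpha * \beta) \leq \lambda_H(\alpha) + \lambda_G(\beta)$; taking infima yields the inequality. For (2), with (5) in hand I would perturb $0$ by a $C^2$-small autonomous Morse function $\epsilon f$, identify $CF_*(\epsilon f)$ with the Morse complex of $\epsilon f$ equipped with trivial cappings, observe that the unit of $QH^*(M)$ is represented by a maximum of $\epsilon f$ so that $\rho(\epsilon f; 1) = -\epsilon \max f$, and let $\epsilon \to 0$.

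For (6), $\rho(H;a)$ is by definition an infimum of numbers drawn from $Spec(H)$, and since $Spec(H)$ is a closed nowhere dense subset of $\mathbb{R}$ when $H$ is non-degenerate, this infimum is attained. The main obstacle is homotopy invariance (7). The plan is to connect normalized $H$ and $G$ by a smooth family $\{H^s\}_{s\in[0,1]}$ of normalized Hamiltonians all generating paths from $Id$ to $\phi^1_H = \phi^1_G$. The function $s \mapsto \rho(H^s; a)$ is continuous by (5); by (6), applied after a generic perturbation, it takes values in the union of the spectra; and the crucial step, which is exactly where the normalization hypothesis is used, is to show that the relevant ``branch'' of the spectrum — the values attained at capped orbits lying in a fixed lift to the universal cover — is locally constant in $s$, so that the continuous path sits inside a totally disconnected set and must be constant.
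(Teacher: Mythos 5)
This proposition is stated in the paper without proof --- it is quoted from Oh's work (\cite{Oh1}, \cite{Oh2}) --- so there is no internal argument to compare against; your outline for items (1)--(5) and (7) follows the standard Floer-theoretic proofs (chain-level identifications, monotone continuation maps, the pair-of-pants product, and the ``continuous function into a totally disconnected spectrum is constant'' argument, which is indeed the mechanism the paper itself reuses in its Proposition 2.2). One cosmetic remark: with the paper's action convention $\mathcal{A}_H([z,u])=-\int H\,dt-\int_u\omega$, a $C^2$-small Morse function has $\rho(\epsilon f;1)=-\epsilon\min f$ rather than $-\epsilon\max f$; this does not affect your limit argument for (2).

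There is, however, a genuine gap in your treatment of (6). You argue that $\rho(H;a)$ lies in $Spec(H)$ because it is an infimum of spectral values and ``$Spec(H)$ is a closed nowhere dense subset of $\mathbb{R}$ when $H$ is non-degenerate.'' This is false in the generality the paper works in: for a non-degenerate $H$ on a general closed $(M,\omega)$, $Spec(H)$ is a finite union of cosets of the period group $\Gamma_\omega=\{\int_A\omega : A\in\Gamma\}$, and when $\Gamma_\omega$ is non-discrete (e.g.\ $S^2\times S^2$ with incommensurable areas) this set is \emph{dense} in $\mathbb{R}$ --- countable and measure zero, but neither closed nor nowhere dense --- so the infimum of a subset of it need not lie in it. Closedness of the spectrum only rescues the argument in the rational/aspherical case. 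Non-degenerate spectrality for general closed symplectic manifolds is a substantive theorem (due to Oh, with a later purely algebraic proof by Usher exploiting the Novikov finiteness condition to show the infimum over representing Floer cycles is actually attained), and your sketch supplies no replacement for that input. Note also that your item (7) quietly relies on (6) holding along the whole family $H^s$; there the fix is easy --- non-degeneracy depends only on the common time-one map, so no ``generic perturbation'' is needed --- but the underlying spectrality statement still requires the harder argument above.
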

   
   One can also define spectral invariants for Hamiltonian paths.  Given $\phi^t \in PHam(M)$ we take $H$ to be the unique normalized Hamiltonian generating $\phi^t$ and define $\rho(\phi^t;a) = \rho(H;a)$.  Note that the homotopy invariance property implies that $\rho(\cdot;a)$ descends to the universal cover of $Ham(M)$.  
   
   We define the spectral length function $\gamma : C^{\infty}([0,1] \times M) \rightarrow \mathbb{R}$ by 
   $$\gamma(H) = \rho(H;1) + \rho(\bar{H};1).$$
   Note that the triangle inequality implies that $\gamma$ is always non-negative.  Also, if $H$ and $G$ differ by a function of time, then part \ref{Shift} of Proposition \ref{Properties of Spectral Invariants} implies that $\gamma(H) = \gamma(G)$.  Hence, $\gamma$ is well defined on $PHam(M)$ and by the homotopy invariance property it in fact descends to the universal cover of $Ham(M)$.  By property (\ref{Continuity}) of spectral invariants we have $\gamma(H) \leq \Vert H \Vert_{(1, \infty)}$.
   
   Define the spectral norm of a Hamiltonian diffeomorphism $\psi$ by
   $$\Vert \psi \Vert_{\gamma} = \inf \{ \gamma(H): \psi = \phi^1_H \}.$$
   $\Vert \cdot \Vert_{\gamma}$ induces a non-degenerate norm on $Ham(M)$, see \cite{Oh3, U1} for a proof.  The spectral distance of two Hamiltonian diffeomorphisms is given by: $$d_{sp}(\phi, \psi) = \Vert \phi^{-1} \psi \Vert_{\gamma}.$$
   
   {\bf Warning:}  Some authors use the notation $\gamma(\cdot)$ for the spectral norm.  Please note that we are using that notation for a different purpose here.  \\
   
   Recall that the Hofer norm of a Hamiltonian diffeomorphism $\psi \in Ham(M)$, introduced in \cite{H1}, is given by the following expression:
  $$\Vert \psi \Vert_{Hofer} = \inf \{ \Vert H \Vert_{(1,\infty)}: \psi = \phi^1_H \}.$$
  Non-degeneracy of the above norm was established in \cite{H1} on $\mathbb{R}^{2n}$ and in \cite{LM} on general symplectic manifolds.  The Hofer distance of two Hamiltonian diffeomorphisms is given by: $$d_{Hofer}(\phi, \psi) = \Vert \phi^{-1} \psi \Vert_{Hofer}.$$

\subsection{Main Results} \label{Main Results}
   Throughout this paper, we equip $M$ with a distance $d$ induced by any Riemannian metric.  The $C^0$-topology on $Diff(M)$, the space of diffeomorphisms of $M$, is the topology induced by the distance $ \displaystyle  d_{C^0}(\phi, \psi):= \max_{x}d(x, \phi^{-1}\psi(x))$.  Similarly, for paths of diffeomorphisms $\phi^t, \psi^t$ ($t\in [0,1]$) we define their $C^0$-distance by the expression $\displaystyle d_{C^0}^{path}(\phi^t, \psi^t):= \max_{t,x} d(x, (\phi^t)^{-1}\psi^t(x)).$  Unless otherwise mentioned, we assume that both $Ham$ and $PHam$ are equipped with the above $C^0$ distances.
   
\subsubsection{{\bf A locally Lipschitz estimate for spectral invariants}}
   
   Recall that for a Hamiltonian path $\phi^t_H$, generated by normalized $H$, $\rho(\phi^t_H;a)$ is defined to be $\rho(H;a)$.  One may ask if the map $\rho(\cdot;a): PHam(M) \rightarrow \mathbb{R}$ is $C^0$-continuous.  The answer to this question turns out to be no.  In Example \ref{Discontinuity Example}, we will construct a sequence of normalized Hamiltonians $H_k$ such that the flows of these Hamiltonians $C^0$-converge to the identity, but the spectral invariants $\rho(H_k;1)$ and $\rho(\bar{H_k};1)$ converge to $1$ and $-1$, respectively.  However, as the first main theorem of this note demonstrates, it seems that the culprit here is the requirement that the generating Hamiltonians be normalized.
   
   Let $B$ be an open ball in $(M, \omega)$ and denote by $C^{\infty}_c([0,1] \times (M \setminus B))$ the set of smooth functions vanishing on $B$.
  
  \begin{thm} ($C^0$-Spectral Estimate) \label{Lipschitz Estimate}
      Suppose $H, G \in C^{\infty}_c([0,1] \times (M \setminus B))$.   There exist constants $C, \delta > 0$ (depending on $B$) such that if $d_{C^0}^{path}(\phi^t_G, \phi^t_H) < \delta$, then 
   
       $$|\rho(G;a) - \rho(H;a)| \leq C \, d_{C^0}^{path}(\phi^t_G, \phi^t_H).$$
      
  \end{thm}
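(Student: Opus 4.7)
The plan is to reduce the theorem to a spectral length estimate for a ``difference'' Hamiltonian $F$, and then to bound that spectral length by exploiting the ball $B$ as a source of short $1$-periodic orbits.

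\emph{Step 1: Reduction via the triangle inequality.} Set
$$F(t,y) := G(t, \phi^t_H(y)) - H(t, \phi^t_H(y)),$$
so that $F$ generates $\theta^t := (\phi^t_H)^{-1} \phi^t_G$. A direct computation verifies the exact identities $H \# F = G$ and $G \# \bar F = H$ (with no time-dependent additive constant). Applying item~\ref{Triangle Inequality} of Proposition~\ref{Properties of Spectral Invariants} with $b = 1$ (so $a \ast b = a$) gives
$$\rho(G;a) - \rho(H;a) \leq \rho(F;1), \qquad \rho(H;a) - \rho(G;a) \leq \rho(\bar F;1),$$
whence $|\rho(G;a) - \rho(H;a)| \leq \max(\rho(F;1), \rho(\bar F;1))$. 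Since $d_{C^0}^{path}(\phi^t_H, \phi^t_G) = d_{C^0}^{path}(id, \theta^t)$ and $F(t, \cdot)$ vanishes on the open set $U_t := (\phi^t_H)^{-1}(B)$, it suffices to prove: \emph{if $F$ vanishes on $U_t$ for each $t$ and $d_{C^0}^{path}(id, \phi^t_F) < \delta$, then both $\rho(F;1)$ and $\rho(\bar F;1)$ are bounded by $C\delta$.}

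\emph{Step 2: Short orbits from the ball.} Because $G$ and $H$ both vanish on $B$, every $x_0 \in B$ is a common fixed point of $\phi^t_H$ and $\phi^t_G$ for all $t$, so in particular a fixed point of $\theta^1$. The corresponding $1$-periodic orbit of $\phi^t_F$,
$$z_{x_0}(t) := \theta^t(x_0) = (\phi^t_H)^{-1}(x_0),$$
is entirely contained in the geodesic ball of radius $\delta$ around $x_0$, it lies inside $U_t$ for every $t$, and therefore $F(t, z_{x_0}(t)) \equiv 0$. Consequently the action of any capping $[z_{x_0}, u_{x_0}]$ reduces to the purely symplectic quantity $-\int_{u_{x_0}} \omega$. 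An entirely parallel construction, using the inverse flow $\theta^{-t}$ and observing that $x_0 \in B$ is also fixed by $\phi^t_H$, produces corresponding short orbits of $\phi^t_{\bar F}$ with vanishing $\bar F$-integrand.

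\emph{Step 3: The main obstacle.} The central difficulty is to promote the existence of small-action critical points into an upper bound on $\rho(F;1)$: the spectrality property (item~\ref{Spectrality}) only asserts $\rho(F;1) \in \mathrm{Spec}(F)$ and identifies no particular element. I expect the heart of the proof to be a Piunikhin-Salamon-Schwarz-type argument, showing that the PSS image of the fundamental class $[M] \in H^*(M)$ in $CF_*(F)$ is realized, via a count of pseudoholomorphic spiked disks, by a Floer cycle supported on the $2n$-parameter Morse--Bott family $\{[z_{x_0}, u_{x_0}] : x_0 \in B\}$. A delicate subsidiary point is the selection of the cappings $u_{x_0}$: although $z_{x_0}$ has diameter at most $\delta$, its length is \emph{a priori} unbounded, so naive isoperimetric estimates do not suffice; a natural candidate is to build $u_{x_0}$ directly from the isotopy $\phi^t_H$, yielding an area of order $\delta$ (or better). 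Standard genericity and $L^{(1,\infty)}$-continuity arguments should then bridge the non-degenerate PSS calculation and the general smooth $F$. Applying the same argument to $\bar F$ completes the bound on both spectral invariants and hence on $|\rho(G;a) - \rho(H;a)|$.
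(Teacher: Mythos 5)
Your Step 1 is fine and is essentially the paper's own reduction (the paper also passes to the ``difference'' Hamiltonian $\bar G\# H$, which vanishes on $B$, and invokes the triangle inequality with $b=1$). The problem is that everything after that is the theorem, and you have not proved it: the bound $|\rho(F;1)|\le C\, d_{C^0}^{path}(Id,\phi^t_F)$ is exactly the hard content, and your Step 3 openly defers it to a hoped-for PSS argument. Worse, the mechanism you sketch cannot work as stated. Since $H$ and $G$ vanish identically on the open ball $B$, their vector fields vanish there, so every $x_0\in B$ is fixed by both flows and your orbits $z_{x_0}(t)=(\phi^t_H)^{-1}\phi^t_G(x_0)$ are in fact \emph{constant}; their capped actions are $-\int_u\omega\in -\omega(\pi_2(M))$, a set that does not depend on $\delta$ at all, and with the constant capping the action is $0$. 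Hence a Floer cycle representing $\Phi(1)$ and supported on this family would force $\rho(F;1)\le 0$ (up to sphere areas), and no estimate linear in $d_{C^0}$ can come out of these actions. The paper's own Example \ref{Sharpness of the Lipschitz Estimate} shows this is false: there $F$ vanishes on a ball, $d_{C^0}^{path}(Id,\phi^t_F)\le C/i$, yet $\rho(F;1)=-\min F=a/i>0$. So the representative of the unit is \emph{not} localized on the fixed ball, and there is no standard PSS mechanism that would localize it there (the PSS moduli spaces are global, and the family of constant orbits in $B$ is maximally degenerate, so any perturbation destroys it anyway).

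The paper's actual route to the key estimate is quite different and is where the Lipschitz constant comes from. One chooses a Morse function $K$ with all critical points in $B$, so that $\|X_K\|\ge C_1>0$ on $M\setminus B$; for small $s$ the diffeomorphism $\phi^s_K$ moves every point of $Supp(F)\subset M\setminus B$ by at least $C_1 s/2$ ($\epsilon$-shifting, Definition \ref{Epsilon Displacement Definition}). If $d_{C^0}^{path}(Id,\phi^t_F)<C_1 s/2$, a fixed-point analysis gives $Fix(\phi^t_F\phi^s_K)=Fix(\phi^s_K)$, disjoint from $Supp(F)$, and then Proposition \ref{Usher Generalization} (an Entov--Polterovich/Ostrover/Usher-type argument combining nondegenerate spectrality, the fact that $s\mapsto\rho(L_s;1)-\int_0^s c$ takes values in the measure-zero set $Spec(K)$ and is hence constant, and homotopy invariance) yields $|\rho(F;1)|\le\gamma(sK)\le s\|K\|_{(1,\infty)}$ (Theorem \ref{Epsilon Displacement Theorem}). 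Letting $s$ decrease to $\frac{2}{C_1}d_{C^0}^{path}(Id,\phi^t_F)$ produces the linear bound with $C=\frac{2}{C_1}\|K\|_{(1,\infty)}$. In short: your reduction step matches the paper, but the core estimate is missing, and the localization idea you propose in its place is contradicted by the sharpness example in the paper itself.
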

  
  In Example \ref{Sharpness of the Lipschitz Estimate} we prove that the above estimate is sharp in the sense that a locally Lipschitz estimate is optimal. 
   The proof of Theorem \ref{Lipschitz Estimate} relies on the following variation of the concept of displaceability:
   
   \begin{defn} \label{Epsilon Displacement Definition} 
     Fix a positive real number $\epsilon$.  A subset of a symplectic manifold $U \subset M$ is said to be $\epsilon$-shiftable if there exists a Hamiltonian diffeomorphism, $\phi$, such that 
     $$d(p, \phi(p)) \geq \epsilon \text{  } \forall p \in U.$$
         
  \end{defn}
  
  The main idea of the proof of Theorem \ref{Lipschitz Estimate} is as follows: we first reduce the theorem to the case where $G=0$.  In Theorem \ref{Epsilon Displacement Theorem} we will show that if the support of $H$ can be $\epsilon$-shifted by $\psi \in Ham(M)$, and if $d_{C^0}^{path}(Id, \phi^t_H) \leq \epsilon$ then $| \rho(H;1) | \leq \Vert \psi \Vert_{\gamma}$.  We then prove Theorem \ref{Lipschitz Estimate} by carefully constructing $\psi$.  The details of this argument are carried out in section \ref{The Lipschitz Estimate}.
   
   The following statement follows readily from Theorem \ref{Lipschitz Estimate} and the triangle inequality:
  \begin{cor} \label{Lipschitz Estimate for Gamma Norm}
  Suppose $H, G \in C^{\infty}_c([0,1] \times (M \setminus B))$.   There exist constants $C, \delta> 0$ (depending on $B$) such that if $d_{C^0}^{path}(\phi^t_G, \phi^t_H) \leq \delta$, then 
   
       $$|\gamma(G) - \gamma(H)| \leq C \, d_{C^0}^{path}(\phi^t_G, \phi^t_H).$$
  \end{cor}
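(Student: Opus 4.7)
The plan is to deduce the corollary directly from Theorem~\ref{Lipschitz Estimate} and the triangle inequality. Since $\gamma(F) = \rho(F;1) + \rho(\bar{F};1)$, the triangle inequality gives
\[
|\gamma(G) - \gamma(H)| \;\leq\; |\rho(G;1) - \rho(H;1)| \;+\; |\rho(\bar{G};1) - \rho(\bar{H};1)|,
\]
so it suffices to bound each summand by a multiple of $d_{C^0}^{path}(\phi^t_G, \phi^t_H)$. Theorem~\ref{Lipschitz Estimate} applied with $a = 1$ to the pair $(G, H)$ handles the first summand at once.

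For the second summand, I apply the same theorem to the pair $(\bar{G}, \bar{H})$. Two routine checks are needed. First, $\bar{H}$ and $\bar{G}$ still lie in $C^{\infty}_c([0,1] \times (M \setminus B))$: since $H$ vanishes on the open ball $B$, the vector field $X_H$ vanishes on $B$, so $\phi^t_H$ fixes $B$ pointwise; therefore $\bar{H}(t,x) = -H(t, \phi^t_H(x)) = -H(t,x) = 0$ for every $x \in B$, and the same argument works for $\bar{G}$. Second, the $C^0$-distance of the inverse paths must be controlled by $d_{C^0}^{path}(\phi^t_G, \phi^t_H)$; this follows from the change of variables $y = (\phi^t_H)^{-1}(x)$, which identifies $d(x, \phi^t_G(\phi^t_H)^{-1}(x))$ with $d(\phi^t_H(y), \phi^t_G(y))$ and brings the quantity into essentially the same form as $d(y, (\phi^t_G)^{-1}\phi^t_H(y))$ entering the definition of $d_{C^0}^{path}(\phi^t_G, \phi^t_H)$.

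With these in hand, shrinking $\delta$ if necessary, a second application of Theorem~\ref{Lipschitz Estimate} yields $|\rho(\bar{G};1) - \rho(\bar{H};1)| \leq C\, d_{C^0}^{path}(\phi^t_G, \phi^t_H)$. Summing the two estimates and adjusting the constant completes the argument. The proof is essentially bookkeeping; the only non-automatic piece I would write out with care is the symmetry of the $C^0$-distance under inversion of Hamiltonian paths, which is what ensures that the constants $C$ and $\delta$ remain uniform in $H$ and $G$ (depending only on $B$).
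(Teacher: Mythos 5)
Your decomposition $|\gamma(G)-\gamma(H)|\leq|\rho(G;1)-\rho(H;1)|+|\rho(\bar{G};1)-\rho(\bar{H};1)|$, the first application of Theorem \ref{Lipschitz Estimate}, and the check that $\bar{G},\bar{H}$ vanish on $B$ are all fine. The gap is precisely the step you call routine: the comparison of the $C^0$-distance of the inverse paths with $d_{C^0}^{path}(\phi^t_G,\phi^t_H)$. With the paper's definition, $d_{C^0}^{path}(\phi^t_G,\phi^t_H)=\max_{t,y}d\bigl((\phi^t_G)^{-1}(y),(\phi^t_H)^{-1}(y)\bigr)$, whereas your change of variables shows $d_{C^0}^{path}(\phi^t_{\bar{G}},\phi^t_{\bar{H}})=\max_{t,y}d\bigl(\phi^t_G(y),\phi^t_H(y)\bigr)$. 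These are not ``essentially the same form'': passing from one to the other requires applying $\phi^t_G$ (or its inverse), whose modulus of continuity is not controlled by $B$, while the corollary demands constants depending on $B$ alone. Only the distance to the \emph{identity} is inversion-invariant (this is the identity $d_{C^0}^{path}(Id,\phi^t_H)=d_{C^0}^{path}(Id,\phi^t_{\bar{H}})$ used in the proof of Theorem \ref{Epsilon Displacement Theorem}); for two non-identity paths no such symmetry holds. Concretely, take $G$ vanishing on $B$ whose flow strongly expands a small ball $B'$ disjoint from $B$, and $H=G\#F$ with $F$ supported in a tiny sub-ball of $B'$ and $C^0$-small: then $d_{C^0}^{path}(\phi^t_G,\phi^t_H)=d_{C^0}^{path}(Id,\phi^t_F)$ is arbitrarily small, while $d_{C^0}^{path}(\phi^t_{\bar{G}},\phi^t_{\bar{H}})=\max_{t,y}d(\phi^t_G\phi^t_F(y),\phi^t_G(y))$ stays large. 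So your second application of Theorem \ref{Lipschitz Estimate} bounds $|\rho(\bar{G};1)-\rho(\bar{H};1)|$ by the wrong quantity, and the constants cannot be made uniform.

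The intended ``triangle inequality'' is the one for spectral invariants, applied so that only distances to the identity appear. Since $H=G\#(\bar{G}\#H)$ and $\bar{H}=\overline{\bar{G}\#H}\,\#\,\bar{G}$, the triangle inequality gives $\gamma(H)-\gamma(G)\leq\rho(\bar{G}\#H;1)+\rho(\overline{\bar{G}\#H};1)$. Both Hamiltonians $\bar{G}\#H$ and $\overline{\bar{G}\#H}$ vanish on $B$ (because $\phi^t_G$ and $\phi^t_H$ fix $B$ pointwise), and their flows are $(\phi^t_G)^{-1}\phi^t_H$ and its inverse path, each at $C^0$-distance from the identity exactly $d_{C^0}^{path}(\phi^t_G,\phi^t_H)$. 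Applying Theorem \ref{Lipschitz Estimate} (with the other Hamiltonian equal to $0$) to each of these two Hamiltonians yields $\gamma(H)-\gamma(G)\leq 2C\,d_{C^0}^{path}(\phi^t_G,\phi^t_H)$, and the symmetric argument bounds $\gamma(G)-\gamma(H)$, which is the corollary.
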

  
  We will prove Theorem \ref{Lipschitz Estimate} in section \ref{The Lipschitz Estimate}.  By part \ref{Shift} of Proposition \ref{Properties of Spectral Invariants}, if $r:[0,1] \rightarrow \mathbb{R}$ is a function of time then $\gamma(H) = \gamma(H + r)$, and so $\gamma$ is not affected by normalization.  Hence, in light of Corollary \ref{Lipschitz Estimate for Gamma Norm}, it is more reasonable to expect that the map $\gamma: PHam \rightarrow \mathbb{R}$ is $C^0$-continuous.  It follows from Theorem \ref{Continuity of Gamma} that on surfaces this indeed is the case.  
  
  \subsubsection{{\bf Spectral norm v.s. $C^0$-norm}}
  In section \ref{Surfaces}, we study the relation between $\Vert \cdot \Vert_{\gamma}$ and $d_{C^0}$ on surfaces.  In \cite{H2}, Hofer compares the $C^0$-distance and the Hofer distance on $Ham_c(\mathbb{R}^{2n})$, the group of compactly supported Hamiltonian diffeomorphisms of $\mathbb{R}^{2n}$, and obtains the well known $C^0$-Energy estimate:
  $$ d_{Hofer}(\phi, \psi) \leq C d_{C^0}(\phi, \psi).$$
  
  No estimate of this kind holds on closed manifolds.  In fact, one can show that on any surface there exists a sequence of Hamiltonian diffeomorphisms which converges to the identity in $C^0$-topology, but diverges with respect to Hofer's metric. Contrary to the above fact, in section \ref{Surfaces}, we will show that:
    \begin{thm}\label{Gamma Norm}
    Let $(\Sigma, \omega)$ denote a closed surface of genus $g$ equipped with an area form.  Suppose that $\phi \in Ham(\Sigma)$.  There exist constants $C\text{, }\delta > 0 $ such that if  $d_{C^0}(Id, \phi) \leq \delta$ then 
     $$\Vert \phi \Vert_{\gamma} \leq C \, (d_{C^0}(Id, \phi))^{2^{-2g-1}}.$$ 
    
   \end{thm}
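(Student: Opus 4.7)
The strategy is to combine Corollary \ref{Lipschitz Estimate for Gamma Norm} with a carefully chosen recursive decomposition of $\phi$. Given $\phi$ with $\epsilon := d_{C^0}(Id, \phi)$ small, the goal is to produce a factorization $\phi = \phi_1 \circ \cdots \circ \phi_N$ in which each $\phi_i$ is generated by a Hamiltonian $H_i$ vanishing on a fixed ball $B_i \subset \Sigma$, together with a chosen generating path $\phi^t_{H_i}$ whose $C^0$-path-distance to the identity is bounded by $C \, \epsilon^{2^{-2g-1}}$. Applying Corollary \ref{Lipschitz Estimate for Gamma Norm} to each factor and summing then yields the desired bound on $\Vert \phi \Vert_\gamma$.

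The decomposition is built recursively using the topology of $\Sigma$. Choose a system of $2g$ disjoint simple closed curves $\alpha_1, \ldots, \alpha_{2g}$ whose complement in $\Sigma$ is an open disk $D$. The fragmentation proceeds one curve at a time: at the $k$-th step, one extracts from the current residual factor a piece supported in a thin tubular neighborhood of $\alpha_k$, leaving a residual factor supported in a strictly simpler subsurface. A final, $(2g+1)$-th, step handles the piece left in $D$ via the Epsilon Displacement Theorem (Theorem \ref{Epsilon Displacement Theorem}), since a disk support is $\epsilon$-shiftable by a Hamiltonian of small spectral norm. The key quantitative point is that at each step the width of the cutoff neighborhood is chosen of order $\sqrt{\epsilon_{\text{prev}}}$, where $\epsilon_{\text{prev}}$ is the $C^0$-size entering that step; the trade-off between the width of the cutoff and the $C^0$-displacement being localized then produces an extracted factor and a residual factor each of $C^0$-path-size of order $\sqrt{\epsilon_{\text{prev}}}$. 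Composing $2g+1$ such steps compounds the exponent to $2^{-2g-1}$.

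The main obstacle is carrying out this recursion so that each extracted factor simultaneously satisfies (a) its generating Hamiltonian vanishes on a fixed ball, so that Corollary \ref{Lipschitz Estimate for Gamma Norm} applies with uniform constants, and (b) its path-$C^0$-distance is controlled by the appropriate power of $\epsilon$. The exponent $2g+1$ reflects the topological fact that reducing a closed genus $g$ surface to a disk requires $2g$ independent cuts, plus one final fragmentation on the remaining disk, while the square-root loss per cut is intrinsic to any fragmentation argument on a surface. The constant $C$ in the final estimate depends on $g$ through the number of iterations, but not on $\phi$ once $\delta$ is chosen small enough.
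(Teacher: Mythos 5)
Your overall strategy is the paper's: fragment a $C^0$-small $\phi$ into finitely many factors supported in fixed disks, with each factor admitting a generating Hamiltonian that vanishes on a fixed ball and a generating path whose $C^0$-size is controlled by a power of $d_{C^0}(Id,\phi)$ with one square-root loss per cut, and then apply Corollary \ref{Lipschitz Estimate for Gamma Norm} to each factor and sum. However, there are two genuine gaps. First, the topological decomposition you propose does not exist: there is no system of $2g$ \emph{disjoint} simple closed curves in a closed genus $g$ surface whose complement is a single open disk. If the complement of a disjoint union of circles were an open disk, then $\chi(\Sigma)=\chi(\coprod S^1)+1=1$, contradicting $\chi(\Sigma)=2-2g$; equivalently, the complement of a regular neighborhood of the curves would be a closed disk with one boundary circle, while the neighborhood has $4g$ boundary circles. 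The failure is already visible for $g=0$: your scheme would reduce to a single step on ``the disk'' $D=S^2$, but $S^2$ is not a disk and cannot be $\epsilon$-shifted (a shift of all of $S^2$ would be a fixed-point-free Hamiltonian diffeomorphism), so at least two pieces and one square-root loss are unavoidable. The correct combinatorics is the handle-by-handle scheme of Proposition \ref{Fragmentation} (first cut off a disk, then induct over the $2g$ one-handles using rectangles), which gives $N=2g+2$ pieces and $2g+1$ square-root losses, hence the exponent $2^{-2g-1}$; a standard curve system with disk complement exists only if the curves are allowed to intersect (all through a basepoint), and then the ``extract a factor near $\alpha_k$'' step is no longer the clean annulus cut you describe.

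Second, and more seriously, the quantitative extraction step is asserted rather than proved, and it is the technical heart of the theorem. At each cut one must produce a factorization $\phi=\phi_1 h$ with $\phi_1$ a \emph{Hamiltonian} diffeomorphism supported in a fixed disk (rectangle or annulus) and $h$ a \emph{Hamiltonian} diffeomorphism of the simpler subsurface, both of $C^0$-size $O(\sqrt{\epsilon})$. Keeping both factors Hamiltonian requires a zero-flux condition across the cut (in the annulus version: the area between $S^1\times\{y\}$ and its image vanishes), which must be checked from the hypothesis that $\phi$ itself is Hamiltonian; your sketch never addresses flux, and for cuts along non-separating curves this is exactly where an area-preserving but non-Hamiltonian residual factor could appear. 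The quantitative $\sqrt{\epsilon}$ bound itself comes from the modified extension lemmas of \cite{EPP} (choice of mesh $r=\epsilon^{1/2}$, adjustment of areas of squares, and a Moser argument with a $C^0$-estimate on the primitive), none of which is supplied by the heuristic ``trade-off between the width of the cutoff and the displacement.'' Finally, after fragmenting the diffeomorphisms one still needs generating \emph{paths} of comparable $C^0$-size (this is Lemma \ref{Local Path Connectedness}, via rescaling in the disk); you fold this requirement into the decomposition, which is fine as a statement but also needs an argument. With the handle-by-handle fragmentation and these extension lemmas in place, the remainder of your argument (summing $\gamma(H_i)$ via the triangle inequality) is exactly the paper's conclusion.
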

   
    \begin{rem}
     It follows readily from the above result that if $\phi, \psi \in Ham(\Sigma)$ and $d_{C^0}(\phi, \psi) \leq \delta$ then 
   $$d_{sp}(\phi, \psi) \leq C \, (d_{C^0}(\phi, \psi))^{2^{-2g-1} }.$$ 
   This establishes $C^0$-continuity of the spectral norm on surfaces.  This answers part 1 of Question 5.13 of \cite{Oh6}, in the case of surfaces.
   \end{rem}
   
   One interesting consequence of Theorem \ref{Gamma Norm} is the following result about $C^0$-continuity of the map
   $\gamma: PHam(\Sigma) \rightarrow \mathbb{R}$.  
   
   \begin{thm} \label{Continuity of Gamma}
     Suppose $\phi^t_H \in PHam(\Sigma)$, where $\Sigma$ is a surface of genus $g$.  There exist constants $C\text{, }\delta > 0 $ such that if  $d_{C^0}^{path}(Id, \phi^t_H) \leq \delta$ then 
     $$\gamma(H) \leq C \, (d_{C^0}(Id, \phi^1_H))^{2^{-2g-1}}.$$ 
   \end{thm}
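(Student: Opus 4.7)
The plan is to bridge Theorem \ref{Gamma Norm} (which controls the spectral norm of the endpoint $\phi^1_H$) with $\gamma(H)$, which depends not merely on the endpoint but on the class of $\phi^t_H$ in the universal cover of $Ham(\Sigma)$. Since $d_{C^0}(Id,\phi^1_H)\leq d_{C^0}^{path}(Id,\phi^t_H)\leq \delta$, Theorem \ref{Gamma Norm} immediately supplies
$$\Vert \phi^1_H\Vert_\gamma \;\leq\; C\,(d_{C^0}(Id,\phi^1_H))^{2^{-2g-1}}.$$
By the definition of $\Vert\cdot\Vert_\gamma$ as an infimum and by tracking the explicit construction used to prove Theorem \ref{Gamma Norm}, one can arrange a specific Hamiltonian $G$ with $\phi^1_G=\phi^1_H$ and $\gamma(G)\leq 2C\,(d_{C^0}(Id,\phi^1_H))^{2^{-2g-1}}$ whose flow $\phi^t_G$ remains inside a small $C^0$-neighborhood of the identity.

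Given such $G$, I apply the triangle inequality for $\gamma$:
$$\gamma(H) \;\leq\; \gamma(G) + \gamma(\bar G \# H),$$
and observe that $\bar G\#H$ generates the Hamiltonian loop $\ell^t:=(\phi^t_G)^{-1}\phi^t_H$ based at the identity. By the triangle inequality for $d_{C^0}^{path}$ (applied to $\phi^t_H$ and $\phi^t_G$), this loop is $C^0$-small. Because $\gamma$ descends to the universal cover of $Ham(\Sigma)$, it suffices to show that $\ell^t$ is null-homotopic through Hamiltonian loops; the homotopy invariance property in Proposition \ref{Properties of Spectral Invariants}(\ref{Homotopy Invariance}) then gives $\gamma(\bar G\#H)=0$, whence $\gamma(H)\leq \gamma(G)$ and the theorem follows.

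The principal obstacle is the contractibility of sufficiently $C^0$-small Hamiltonian loops on a closed surface. For $\Sigma=S^2$, this is easy: the generator of $\pi_1(Ham(S^2))=\mathbb{Z}/2$ is a full rotation, which is visibly not $C^0$-small. For $g\geq 1$, one can use the flux homomorphism, under which $\pi_1(Ham(\Sigma))$ maps into a discrete subgroup of $H^1(\Sigma,\mathbb{R})$; a $C^0$-small loop has small flux and must therefore represent the trivial class. Alternatively, one may avoid this homotopy step by applying Corollary \ref{Lipschitz Estimate for Gamma Norm} directly to $G$ and a modification of $H$ arranged to vanish on the common ball on which $G$ vanishes, but producing such a modification demands essentially the same geometric control, so I view the homotopy route as the conceptually cleanest.
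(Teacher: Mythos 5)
Your overall architecture is the same as the paper's: use the construction behind Theorem \ref{Gamma Norm} (this is exactly Remark \ref{Local Path Connectedness on Surfaces}) to produce $G$ with $\phi^1_G=\phi^1_H$, small $\gamma(G)$, and $C^0$-small flow, then reduce $\gamma(H)\leq\gamma(G)$ to the contractibility of the $C^0$-small loop $(\phi^t_G)^{-1}\phi^t_H$ via homotopy invariance. The genuine gap is in the step you dismiss as ``easy'' for $\Sigma=S^2$. The statement you need is Lemma \ref{Contractibility of small loops}: \emph{every} sufficiently $C^0$-small Hamiltonian loop on $S^2$ is contractible. Saying that the generator of $\pi_1(Ham(S^2))=\mathbb{Z}/2\mathbb{Z}$ is a full rotation ``which is visibly not $C^0$-small'' only tells you that one particular representative of the nontrivial class is not small; it does not exclude the possibility that some $C^0$-small loop is homotopic to the rotation, and $C^0$-smallness is not an obvious homotopy-invariant obstruction. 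The paper has to work for this: it subdivides the small loop into $N$ pieces, caps each piece off with Hamiltonians $F_i$ produced by Theorem \ref{Gamma Norm} (so $\gamma(F_i)\leq 2C\delta^{1/2}$), uses the triangle inequality to bound $\gamma$ of each resulting small loop by $4C\delta^{1/2}+1<4\pi$, and then uses that $\gamma$ descends to $\widetilde{Ham}(S^2)$ together with the computation $\gamma(f)=4\pi$ for the full rotation to conclude each small loop cannot lie in the nontrivial class. Without this (or an equivalent) argument, your proof is incomplete precisely in the only case where the contractibility statement has content.

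A secondary point: for $g\geq 1$ your flux argument is off. Every Hamiltonian loop has zero flux, so ``small flux'' gives no information; what you actually need, and what the paper invokes, is that $Ham(\Sigma)$ is simply connected for surfaces of positive genus (see chapter 7 of \cite{P}), which also lets one skip the loop comparison entirely in that case since then $\gamma(H)=\Vert\phi^1_H\Vert_\gamma$ and Theorem \ref{Gamma Norm} applies directly. The conclusion you want is true, but the justification as written is not the correct mechanism.
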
 
   
   The above statement no longer holds if one replaces the assumption $d_{C^0}^{path}(Id, \phi^t_H) \leq \delta$ with the weaker assumption that $d_{C^0}(Id, \phi^1_H) \leq \delta$.  To see this let $H$ denote a time independent Hamiltonian on $S^2$ whose flow rotates the sphere nearly a full turn around its central axis.  Then, $\phi^1_H$ is $C^0$-close to the identity but $\gamma(H)$ is almost $4\pi$.
   
   We will present a proof of Theorem \ref{Continuity of Gamma} in section \ref{Surfaces}.  The proof presented there follows almost immediately from the following lemma, which is probably of independent interest.
   
   \begin{lem}\label{Contractibility of small loops}
   Suppose that $\Sigma$ is a surface of genus $g$ and that $\phi^t_H \in PHam(\Sigma)$ is a loop.  There exists a constant $\delta > 0 $ such that if  $d_{C^0}^{path}(Id, \phi^t_H) \leq \delta$ then the loop $\phi^t_H$ is contractible. 
   \end{lem}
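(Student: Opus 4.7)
I would separate the problem by genus, since the topology of $\mathrm{Ham}(\Sigma)$ behaves very differently in the two cases.

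\emph{Case $g \geq 1$.} The fundamental group $\pi_1(\mathrm{Ham}(\Sigma_g))$ is trivial: for $g \geq 2$ this follows from Earle--Eells' theorem (contractibility of $\mathrm{Diff}_0(\Sigma_g)$) combined with Moser's identification of $\mathrm{Ham}(\Sigma_g)$ as a deformation retract of $\mathrm{Diff}_0(\Sigma_g)$; for $g = 1$ it follows from the flux long exact sequence applied to $\mathrm{Symp}_0(T^2) \simeq T^2$ (rotations). In both subcases every Hamiltonian loop is automatically contractible, so any $\delta > 0$ will do.

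\emph{Case $g = 0$, $\Sigma = S^2$.} Here $\pi_1(\mathrm{Ham}(S^2)) = \mathbb{Z}/2$, generated by the loop of rigid rotations about a fixed axis, so the lemma has genuine content. My strategy is to use the spectral norm $\gamma$ as a quantitative obstruction. Since $\gamma$ descends to a nondegenerate norm on the universal cover $\widetilde{\mathrm{Ham}}(S^2)$ and depends only on homotopy classes rel endpoints, every loop representing the nontrivial class of $\pi_1(\mathrm{Ham}(S^2))$ takes the same fixed positive value $c_0 > 0$, explicitly computable from the spectral invariants of the rotation Hamiltonian. Thus it suffices to establish the inequality $\gamma(H) < c_0$ for sufficiently $C^0$-small loops.

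To bound $\gamma(H)$ from above in terms of $\delta$ I would apply Corollary \ref{Lipschitz Estimate for Gamma Norm} with $G = 0$. The corollary requires the Hamiltonian to vanish on an open ball, so I would first perform a gauge modification: given a small embedded ball $B \subset S^2$, replace $H$ by a homotopic Hamiltonian $\widetilde{H}$ vanishing on $B$ whose flow is $C^0$-close to $\phi^t_H$. This is done by shifting $H$ by a time-dependent constant to normalize its value on $B$ and then cutting it off by a bump function supported in a collar around $B$; the key input that keeps the cut-off harmless is the $C^0$-smallness of the orbits, which are of diameter at most $\delta$. The corollary then yields $\gamma(H) = \gamma(\widetilde{H}) \leq C\delta$, and choosing $\delta < c_0/C$ forces the loop to lie in the trivial class.

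The main obstacle I anticipate is the cut-off/gauge modification step on $S^2$: one needs $\widetilde{H}$ to vanish on the ball $B$ while keeping both the homotopy class of the generated loop and its $C^0$-path-distance to identity under control. This is an adaptation of the patching construction used in the proof of Theorem \ref{Lipschitz Estimate}, and the value of $\delta$ that emerges in the conclusion reflects the trade-off between the size of the collar of $B$, the Lipschitz constant in Corollary \ref{Lipschitz Estimate for Gamma Norm}, and the $\gamma$-gap $c_0$ on the sphere.
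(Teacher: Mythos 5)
Your genus $g \geq 1$ case and your overall strategy for $S^2$ (use $\gamma$, which descends to $\widetilde{Ham}(S^2)$ and is constant on the nontrivial class of loops, as a quantitative obstruction, and beat the value $4\pi$ attained by the full rotation) agree with the paper. The genuine gap is the ``gauge modification'' step. Corollary \ref{Lipschitz Estimate for Gamma Norm} requires a Hamiltonian that vanishes identically on a fixed ball $B$, i.e.\ whose flow fixes $B$ pointwise, and you propose to produce such an $\widetilde H$ from $H$ by subtracting a time-dependent constant and multiplying by a cut-off $\chi$ vanishing near $B$. This is not justified: the modified vector field is $X_{\widetilde H} = \chi X_{H-c} + (H_t - c(t))X_\chi$, and the second term is controlled only by the oscillation of $H_t$ on the collar, which is \emph{not} controlled by the $C^0$-smallness of the flow (a Hamiltonian made of very large, fast-circulating bumps supported in tiny discs generates a $C^0$-small isotopy while oscillating enormously on every collar, so the cut-off flow is neither $C^0$-small nor close to $\phi^t_H$). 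Moreover the cut-off path is in general no longer a loop, let alone homotopic rel endpoints to $\phi^t_H$, so the equality $\gamma(H)=\gamma(\widetilde H)$ --- the only way homotopy invariance can enter --- is unavailable; and since the constants in Corollary \ref{Lipschitz Estimate for Gamma Norm} depend on $B$, you cannot choose $B$ adapted to $H$ without losing the uniformity of $\delta$. Finally, there is no ``patching construction'' in the proof of Theorem \ref{Lipschitz Estimate} to adapt: that proof works by $\epsilon$-shifting the support of $H$, which does not localize a given loop away from a ball.

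The paper circumvents the localization problem entirely by a subdivision-in-time argument: it cuts the loop into $N$ pieces $\phi^t_{K_i}$ that are $C^\infty$-small (so $\gamma(K_i)\leq 1$), uses Theorem \ref{Gamma Norm} (via Remark \ref{Local Path Connectedness on Surfaces}) to choose Hamiltonians $F_i$ with $\phi^1_{F_i}=\phi^{i/N}_H$ and $\gamma(F_i)\leq 2C\delta^{1/2}$, writes $\phi^t_H$ up to homotopy as a concatenation of loops $\lambda_i\simeq \phi^t_{F_{i-1}}\phi^t_{K_i}\phi^t_{\bar{F_i}}$, and bounds $\gamma(\lambda_i)\leq 4C\delta^{1/2}+1<4\pi$ by the triangle inequality, forcing each $\lambda_i$, hence the original loop, to be contractible. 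If you want to rescue your route, you would have to prove that a $C^0$-small Hamiltonian loop on $S^2$ is homotopic, through loops with controlled $C^0$-size, to one generated by a Hamiltonian vanishing on a fixed ball; that statement is not supplied by a bump-function cut-off and is essentially of the same difficulty as the lemma itself.
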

   Of course, the above lemma is only interesting in the case of $\Sigma = S^2$, as $Ham(\Sigma)$ is simply connected for other surfaces.  The proof of this lemma, which will be presented in section \ref{Surfaces}, uses Theorem \ref{Gamma Norm}.
   
  \begin{rem}\label{Connection to Viterbo}
  The statements of Theorems \ref{Lipschitz Estimate}, \ref{Gamma Norm} and \ref{Continuity of Gamma} can be translated into questions about Lagrangian submanifolds of cotangent bundles:  let $\mathcal{L}_0$ denote the space of Lagrangian submanifolds of a cotangent bundle which are Hamiltonian isotopic to the zero section.  In \cite{V1}, C. Viterbo uses generating functions to construct a distance, which we denote by $\Vert \cdot \Vert_{\gamma, \mathcal{L}_0}$, on $\mathcal{L}_0$.  In fact, Viterbo's work in \cite{V1} is a precursor to Oh and Schwarz's work on spectral invariants.  Viterbo has asked whether the distance $\Vert \cdot \Vert_{\gamma, \mathcal{L}_0}$ can be bounded by a multiple of the Hausdorff distance.  An affirmative answer to Viterbo's question would have significant consequences for his theory of symplectic homogenization \cite{V2}.  
  
  
   Note that the distance $\Vert \cdot \Vert_{\gamma}$ on $Ham$ can not be bounded by $d_{C^0}$.  This is because on some compact manifolds, such as the two dimensional torus, $\Vert \cdot \Vert_{\gamma}$  is unbounded.  
  \end{rem}
  
  It would be interesting to see if results in the spirit of Theorems \ref{Gamma Norm} \& \ref{Continuity of Gamma} hold on more general symplectic manifolds.
  
  \subsubsection{{\bf Applications to the theory of Calabi quasimorphisms}}
   Assume that $M$ admits a Calabi quasimorphism $\mu :\widetilde{Ham}(M) \rightarrow \mathbb{R}$, as defined by Entov and Polterovich in \cite{EP1}.  See section \ref{Application to the theory of Calabi quasimorphisms} for the definition of $\mu$ and the necessary background.\\
    
    {\bf First Application: } Let $\eta = \mu - \widetilde{Cal}_U$.  The Calabi property of $\mu$ indicates that $\eta = 0$ if $U$ is displaceable.  If  $U$ is not displaceable, our first application of Theorem \ref{Lipschitz Estimate} establishes that the homogeneous quasimorphism {\bf $\eta$ is locally Lipschitz} with respect to $d_{C^0}^{path}$.  See Theorem \ref{Application to Calabi Quasimorphisms} for a precise statement.\\
    
    {\bf Second Application: } Denote by $B^{2n}$ the ball of radius 1 in $\mathbb{R}^{2n}$, by $Ham(B^{2n})$ the group of compactly supported Hamiltonian diffeomorphisms of $B^{2n}$, and by $\mathcal{H}(B^{2n})$ the $C^0$-closure of $Ham(B^{2n})$ inside the group of compactly supported homeomorphisms of $B^{2n}$.  In \cite{EPP} Entov, Polterovich and Py study a family of homogeneous quasimorphisms $\eta_{\delta}: Ham(B^{2n}) \rightarrow \mathbb{R}$, where $\delta$ is a parameter that ranges over $(0,1)$.  One of the main results of \cite{EPP} is that the homogeneous quasimorphisms $\eta_{\delta}$ are $C^0$-continuous and hence, by general properties of homogeneous quasimorphisms, extend continuously to $\mathcal{H}(B^{2n})$; see Theorem 1.1 and Propositions 1.3, 1.4 and 1.9 in \cite{EPP}.  
    
    In Theorem \ref{Local Lipschitz Continuity of EPP's Quasimorphisms} we improve the results in \cite{EPP} by obtaining an estimate which shows firstly, that the quasimorphisms {\bf $\eta_{\delta}$ are locally Lipschitz continuous} with respect to $d_{C^0}$ on $Ham(B^{2n})$, and secondly, that each {\bf $\eta_{\delta}$ extends to a locally Lipschitz continuous map} on $\mathcal{H}(B^{2n})$.  See Theorem \ref{Local Lipschitz Continuity of EPP's Quasimorphisms} in section \ref{Application to Calabi Quasimorphisms} for proofs and precise statements.
  
\subsubsection{{\bf Spectral Hamiltonian Homeomorphisms and a Question of Y.-G. Oh}} 
     The set of symplectic homeomorphisms, denoted by  $\displaystyle Sympeo(M, \omega)$, is the $C^0$-closure of $Symp(M, \omega)$ in the group of homeomorphisms of $M$.  We denote by $Sympeo_0(M, \omega)$ the path connected component of the identity in $Sympeo(M, \omega)$.  We should point out that $Sympeo(D^2) = Sympeo_0(D^2)$. 
     
   Recently, Oh used spectral invariants to introduce new $C^0$ generalizations of $Ham$ and $PHam$, denoted by $Hameo_{sp}$ and $PHameo_{sp}$, respectively.  The reader should keep in mind that it is not known whether $Hameo_{sp}$ and $PHameo_{sp}$ are closed under composition.  These objects will be defined in section \ref{Application to Spectral Hamiltonian Homeomorphisms}.  Oh asked the following question:
   
   \begin{que}(Oh's Question) \label{Oh's Question}
   Suppose that $M=S^2$ or $D^2$.  Is $Hameo_{sp}(M)$ a proper subset of $Sympeo_0(M)$?  
   \end{que} 
   
   An affirmative answer to the above question would settle the longstanding open problem regarding the simpleness v.s. non-simpleness of the groups $Sympeo_0(S^2)$ and $Sympeo_0(D^2)$.  This is because $Sympeo_0(S^2)$ and $Sympeo_0(D^2)$ have normal subgroups which are contained in 
   $\displaystyle Hameo_{sp}(S^2)$ and $Hameo_{sp}(D^2)$, respectively.

   In section \ref{Application to Spectral Hamiltonian Homeomorphisms}, we will answer the above question in the case of $D^2$ by showing that $Hameo_{sp}(D^2) = Sympeo_0(D^2)$.  In the same section we present two results regarding uniqueness issues for spectral Hamiltonian homeomorphisms.

\section{{\bf Proof of the $C^0$-spectral estimate}} \label{The Lipschitz Estimate}
 
  The main goal of this section is to prove Theorem \ref{Lipschitz Estimate}.  
  
  Recall that we say $U \subset M$ is $\epsilon$-shiftable if there exists a Hamiltonian diffeomorphism, $\phi$, such that 
     $d(p, \phi(p)) \geq \epsilon \text{  } \forall p \in U$; see Definition \ref{Epsilon Displacement Definition}.
  
  \begin{rem} An $\epsilon$-shiftable set is not necessarily displaceable.  However, every compact displaceable set is $\epsilon$-shiftable for a sufficiently small $\epsilon$.  It was pointed out to the author, by the referee of this paper, that there exist displaceable sets which can not be $\epsilon$-shifted for any $\epsilon$.  For example, let $A$ denote a countable dense subset of any closed symplectic manifold $M$.  $A$ can not be $\epsilon$-shifted because otherwise we would obtain a Hamiltonian diffeomorphism of $M$ without any fixed points.  However, $A$ is displaceable: it can be checked that if $H$ is a Morse function without any critical points in $A$, then the set $\{ t \in \mathbb{R}: \phi^t_H(A) \cap A \neq \emptyset \}$ is countable. 
  \end{rem}
  
  The support of a time dependent Hamiltonian is defined by $\displaystyle Supp(H)= \cup_{t \in [0,1]} supp(H_t)$.  The following theorem is the main reason for the introduction of Definition \ref{Epsilon Displacement Definition}.  
  \begin{thm} \label{Epsilon Displacement Theorem}
   Suppose that the support of a Hamiltonian $H$ can be $\epsilon$-shifted by $\psi \in Ham(M)$.  If $d_{C^0}^{path}(Id, \phi^t_H) < \epsilon$, then $$|\rho(H;1) | \leq \Vert \psi \Vert_{\gamma}.$$
  \end{thm}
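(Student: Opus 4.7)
The plan is to adapt the standard energy--capacity inequality argument to the $\epsilon$-shiftable setting. I will establish the upper bound $\rho(H;1) \leq \Vert\psi\Vert_{\gamma}$ directly; the matching lower bound $\rho(H;1) \geq -\Vert\psi\Vert_{\gamma}$ then follows by applying the same estimate to $\bar H$, whose support equals $\mathrm{Supp}(H)$ and whose flow $(\phi^t_H)^{-1}$ is also $\epsilon$-close to the identity, together with $\rho(H;1) + \rho(\bar H;1) = \gamma(H) \geq 0$.

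For the upper bound, fix $\delta > 0$ and choose a normalized, non-degenerate Hamiltonian $F$ generating $\psi$ with $\gamma(F) < \Vert\psi\Vert_{\gamma} + \delta$. Since $\bar F \# F = 0$ as a Hamiltonian, one has $\bar F \# F \# H = H$, and the triangle inequality from Proposition \ref{Properties of Spectral Invariants} gives
\[ \rho(H;1) \,\leq\, \rho(\bar F;1) + \rho(F \# H;1). \]
The whole matter thus reduces to proving the identity $\rho(F\#H;1) = \rho(F;1)$, which combined with the above yields $\rho(H;1) \leq \gamma(F) < \Vert\psi\Vert_{\gamma} + \delta$; letting $\delta \to 0$ concludes.

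I plan to establish this identity by a spectrality-plus-continuity argument applied to the one-parameter family $s \in [0,1] \mapsto F \# sH$. The map $s \mapsto \rho(F\#sH;1)$ is continuous by part \ref{Continuity} of Proposition \ref{Properties of Spectral Invariants}, so it suffices to verify $\mathrm{Spec}(F\#sH) = \mathrm{Spec}(F)$ for every $s$: the spectrum is of measure zero and hence totally disconnected, so a continuous spectrum-valued map must be constant. The time-$1$ map of $F\#sH$ is $\psi \circ \phi^s_H$, and I claim $\mathrm{Fix}(\psi\phi^s_H) = \mathrm{Fix}(\psi)$. Indeed, if $p \in \mathrm{Supp}(H)$, then $\phi^t_H$ preserves $\mathrm{Supp}(H)$, so $q := \phi^s_H(p) \in \mathrm{Supp}(H)$; the $\epsilon$-shifting hypothesis yields $d(q,\psi(q)) \geq \epsilon$ while the $C^0$-hypothesis gives $d(p,q) < \epsilon$, and the triangle inequality forces $\psi(q) \neq p$. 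For $p \notin \mathrm{Supp}(H)$, the equation $\psi(\phi^s_H(p)) = p$ reduces to $\psi(p) = p$ since $\phi^s_H(p) = p$. Because $\phi^s_H$ is the identity on an open neighborhood of every such $p$, the $1$-periodic orbit through $p$ of $F\#sH$ coincides with $\phi^t_F(p)$, and a direct computation using $H(t,p)=0$ shows it carries the same action as the corresponding $F$-orbit; the linearized Poincar\'e map also equals $d\psi_p$, so non-degeneracy of $F$ automatically passes to $F\#sH$ for every $s$. The main technical hurdle here is arranging that the spectrality framework is available throughout the deformation, and the explicit orbit-by-orbit correspondence just described is precisely what resolves it.
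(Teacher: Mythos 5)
Your plan is essentially the paper's: the paper also reduces to the one-sided bound $\rho(H;1)\le\Vert\psi\Vert_{\gamma}$ (recovering the other side via $\bar H$ and $\gamma\ge 0$), deforms along a one-parameter family interpolating between $\psi$ and $\psi$ composed with the $H$-flow, shows that all fixed points stay off $Supp(H)$ and do not move, deduces that the action spectrum is constant in $s$, and concludes by non-degenerate spectrality plus continuity (this is Proposition \ref{Usher Generalization}). Your variant is in fact a bit leaner: by deforming $F\#H^s$ directly, with endpoint literally equal to $F$ at $s=0$, you avoid the paper's normalization of $H$ (the bookkeeping shift $\int_0^s c(t)\,dt$ in the spectrum) and the homotopy-invariance step identifying the time-concatenated Hamiltonian $L_1$ with $\tilde H\# K$; since $H$ vanishes along every surviving $1$-periodic orbit, no normalization is needed and the spectrum is honestly unchanged, exactly as you compute.

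Two points need repair. First, ``choose a normalized, non-degenerate Hamiltonian $F$ generating $\psi$'' is not a choice you can make: non-degeneracy is a property of the time-one map $\psi$ itself, so if $\psi$ is degenerate no generator of it is non-degenerate, and the theorem assumes nothing about $\psi$. The fix is the paper's: perturb $\psi$ in $C^{\infty}$ to a non-degenerate $\psi'$ which still $\epsilon'$-shifts $Supp(H)$ for some $\epsilon'$ with $d_{C^0}^{path}(Id,\phi^t_H)<\epsilon'<\epsilon$ (possible because the hypothesis is a strict inequality), and control $\Vert\psi'\Vert_{\gamma}\le\Vert\psi\Vert_{\gamma}+\Vert\psi^{-1}\psi'\Vert_{Hofer}$, which can be made arbitrarily small, before letting the perturbation and $\delta$ go to zero. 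Second, a notational slip that matters for time-dependent $H$: the Hamiltonian $sH(t,x)$ does not generate $\phi^{st}_H$, so its time-one map need not be $\phi^s_H$ and need not displace points by less than $\epsilon$; you must use the reparametrized family $H^s(t,x)=sH(st,x)$, whose time-one map is $\phi^s_H$ as your fixed-point and action computations require. With $F\#H^s$ in place of $F\#sH$ and the perturbation of $\psi$ added, your argument goes through and matches the paper's.
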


We will now prove Theorem \ref{Lipschitz Estimate}, using the above statement.
  \begin{proof} (Theorem \ref{Lipschitz Estimate})
  
     We will prove the result in two steps.
     
    {\bf Step 1.}  Suppose $\phi^t_G =Id, a=1$.  
  
     We have to show that there exist constants $C$ and $\delta > 0$ such that if $d_{C^0}^{path}(Id, \phi^t_H) < \delta$ then 
     
     \begin{equation}\label{Step 1}
     |\rho(H;1)| \leq C \, d_{C^0}^{path}(Id, \phi^t_H).
      \end{equation}

    To establish (\ref{Step 1}), pick a Morse function $K$ with critical points contained in $B$, and denote by $X_K$ the Hamiltonian vector field of $K$.  $X_K$ is non-vanishing on the compact set $M\setminus B$.  Let $C_1 : = \inf \{\Vert X_K(x)\Vert: x \in M \setminus B\}$.   Using the compactness of $M \setminus B$, one can show that there exists a sufficiently small $r > 0$, such that for each $s\in [0, r]$ the Hamiltonian diffeomorphism $\phi^s_K\text{ } \frac{C_1 s}{2}$-shifts the set $M\setminus B$.
    
    Take $H \in C^{\infty}_c([0,1] \times (M \setminus B))$ such that $d_{C^0}^{path}(Id, \phi^t_H) < \frac{C_1 r}{2}$.  Then, by the previous paragraph, for $s \in (\frac{2}{C_1} d_{C^0}^{path}(Id, \phi^t_H), r]$ the Hamiltonian diffeomorphism $\phi^s_K\text{ } \frac{C_1s}{2}$-shifts the support of $H$.  Also, note that $d_{C^0}^{path}(Id, \phi^t_H) < \frac{C_1s}{2}$.  Therefore, Theorem \ref{Epsilon Displacement Theorem} implies that
    $$|\rho(H;1)| < \Vert \phi^s_K \Vert_{\gamma}.$$
    Because $\Vert \phi^s_K \Vert_{\gamma} \leq  \Vert s K \Vert_{(1,\infty)} = s \Vert K \Vert_{(1,\infty)}$ we have  $$|\rho(H;1)| <  s \Vert K \Vert_{(1,\infty)}.$$
   
    Since the above inequality holds for $s \in (\frac{2}{C_1} d_{C^0}^{path}(Id, \phi^t_H), r]$, we get that
    
                                 $$|\rho(H;1) |\leq  \frac{2}{C_1} d_{C^0}^{path}(Id, \phi^t_H) \Vert K \Vert_{(1,\infty)}.$$
    The estimate (\ref{Step 1}) follows, with $C:= \frac{2}{C_1}  \Vert K \Vert_{(1, \infty)}$ and $\delta := \frac{C_1 r}{2}$.
    
    {\bf Step 2.} No assumptions on $\phi^t_G$ or $a$.
    
    We use the constants $\delta$ and $C$ from the first step.  Recall that by definition $d_{C^0}^{path}(\phi^t_G, \phi^t_H) = d_{C^0}^{path}(Id, \phi^{-t}_G \phi^t_H)$.  If  $d_{C^0}^{path}(\phi^t_G, \phi^t_H) < \delta$, then it follows from the first step that  $$\rho(\bar{G}\#H;1) \leq C d_{C^0}^{path}(\phi^t_G, \phi^t_H).$$
  
  By the triangle inequality for spectral invariants we have $\rho(H;a) - \rho(G;a) \leq \rho(\bar{G}\#H;1)$.  Hence, we get
  $$ \rho(H;a) - \rho(G;a) \leq C d_{C^0}^{path}(\phi^t_G, \phi^t_H).$$
  Similarly, we get $ \rho(G;a) - \rho(H;a) \leq C d_{C^0}^{path}(\phi^t_H, \phi^t_G)$, which combined with the previous inequality implies the result.
  \end{proof}
  
  We will now provide a proof for Theorem \ref{Epsilon Displacement Theorem}.    
  \begin{proof}(Theorem \ref{Epsilon Displacement Theorem})
   Observe that it is sufficient to show that the assumptions of the theorem imply that 
   $$ \rho(H;1) \leq \Vert \psi \Vert_{\gamma}.$$
   Indeed, if the above statement holds then we get $\rho(\bar{H}; 1)  \leq \Vert \psi \Vert_{\gamma}.$  This is because $d_{C^0}^{path}(Id, \phi^t_H) = d_{C^0}^{path}(Id, \phi^{t}_{\bar{H}})$.  By the triangle inequality we have $-\rho(H;1) \leq \rho(\bar{H}; 1) $, and thus $-\rho(H;1) \leq \Vert \psi \Vert_{\gamma}$.  
  
   We may assume, by slightly $C^{\infty}$-perturbing $\psi$, if needed, that it is non-degenerate.
   Let $K$ denote a generating Hamiltonian for $\psi$, i.e., $\psi = \phi^1_K$.  
   
   We claim that $Fix(\phi^t_H \phi^1_K) = Fix(\phi^1_K)$ for each $t \in [0,1]$.  Indeed, if $p \in Fix(\phi^1_K)$, then $p$ can not belong to the support of $H$ because $\phi^1_K$ moves every point in the support of $H$ by a distance of at least $\epsilon$.  Hence, for $p \in Fix(\phi^1_K)$ we have $\phi^t_H \phi^1_K(p) = \phi^t_H(p)=p.$  This shows that, $ Fix(\phi^1_K) \subset Fix(\phi^t_H\phi^1_K)$.  
   
   To show the other containment take a point $p$ in $Fix(\phi^t_H\phi^1_K)$.  First, for a contradiction, suppose that $p \in Supp(H)$. Then $d(p, \phi^1_K(p)) > \epsilon > d_{C^0}^{path}(Id, \phi^t_H)$, so $\phi^t_H$ can not move $\phi^1_K(p)$ back to $p$ and hence, $p$ can not be a fixed point of $\phi^t_H\phi^1_K$.  Next, we will show that $\phi^1_K(p) \notin Supp(H)$.  If $\phi^1_K(p) \in Supp(H)$ then $\phi^t_H\phi^1_K(p) \in Supp(H)$, which in turn implies that $\phi^t_H\phi^1_K(p) \neq p$ because $p \notin Supp(H)$.  Since $\phi^1_K(p) \notin Supp(H)$ we get that $\phi^t_H\phi^1_K(p) = \phi^1_K(p)$.  Thus, $p \in Fix(\phi^1_K)$.
   
   Note that the above argument implies that $Fix(\phi^1_K) \cap Supp(H) = \emptyset$.   
   The result then follows from Proposition \ref{Usher Generalization}, stated and proven below.    
  \end{proof}  
  
   The following proposition is a variation of previously obtained results by Entov-Polterovich\cite{EP1}, Ostrover\cite{O}, and Usher\cite{U1}.  Our proof follows the argument in \cite{U1}.

   \begin{prop}\label{Usher Generalization} (See \cite{EP1} Proposition 3.3, \cite{O} Proposition 2.6, \cite{U1} Proposition 3.1)
   Suppose that $H, \text{ }K: [0,1]\times M \rightarrow \mathbb{R}$ are two Hamiltonians with the following properties:
   \begin{enumerate}
   \item $\phi^1_K$ is non-degenerate,
   \item $Fix(\phi^t_H\circ\phi^1_K) = Fix(\phi^1_K)$ for each $t \in [0,1]$, and
   \item $Fix(\phi^1_K) \cap Supp(H) = \emptyset$.
   \end{enumerate}
   Then, $$\rho(H;1) \leq \gamma(K).$$ 
   \end{prop}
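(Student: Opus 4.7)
The plan is to reduce the inequality to a cleaner statement via the triangle inequality. Since $(H\#K)\#\bar{K}$ and $H$ generate the same Hamiltonian path (their flows agree pointwise), they represent the same element of $\widetilde{Ham}(M)$, and the triangle inequality yields
$$\rho(H;1) = \rho((H\#K)\#\bar{K};1) \leq \rho(H\#K;1) + \rho(\bar{K};1).$$
Thus it suffices to establish $\rho(H\#K;1) \leq \rho(K;1)$, and I will in fact prove equality by an interpolation argument.

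The strategy is to interpolate through a one-parameter family $\{K^s\}_{s\in[0,1]}$ of (normalized) Hamiltonians with $K^0 = K$ and $K^1$ homotopic rel endpoints, as a Hamiltonian path, to the composition path of $H\#K$. A natural choice is the reparametrized concatenation: run $K$ on $[0,1/2]$ and $sH$ on $[1/2,1]$, each at double speed. Assumption (3) says the support of $H$ is separated from $Fix(\phi^1_K)$, so $\phi^t_{sH}$ is the identity in a neighborhood of every point of $Fix(\phi^1_K)$; combined with assumption (2), this forces $Fix(\phi^1_{K^s}) = Fix(\phi^1_K)$ for all $s$, with unchanged linearizations. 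Hence each $K^s$ is non-degenerate and its set of 1-periodic orbits agrees with that of $K$. The key observation is that the action spectrum is frozen: each 1-periodic orbit $z$ coincides with one of $\phi^1_K$ and never enters $\mathrm{Supp}(H)$, so the extra contribution $\int sH(t,z(t))\,dt$ vanishes. Under the bijection of capped orbits (with cappings chosen constantly in $s$), we have $\mathcal{A}_{K^s}([z,u]) = \mathcal{A}_K([z,u])$, and therefore $Spec(K^s) = Spec(K)$ throughout the family.

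To conclude, the function $s \mapsto \rho(K^s;1)$ is continuous by the $L^{(1,\infty)}$-continuity property (item \ref{Continuity} of Proposition \ref{Properties of Spectral Invariants}) and by the non-degenerate spectrality property takes values in the fixed measure-zero set $Spec(K)$, hence is constant. Evaluating at $s=1$ and applying homotopy invariance then gives $\rho(H\#K;1) = \rho(K^1;1) = \rho(K^0;1) = \rho(K;1)$, closing the argument. The main obstacle I anticipate is the careful bookkeeping around the concatenation: checking that the reparametrized path $\phi^t_{K^1}$ is genuinely homotopic rel endpoints to $\phi^t_{H\#K} = \phi^t_H \circ \phi^t_K$ so that homotopy invariance applies, and verifying that the correspondence between critical points of $\mathcal{A}_{K^s}$ and $\mathcal{A}_K$ respects Novikov lifts and not merely the underlying loops. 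Both steps are technical but standard, and are precisely where Usher's framework from \cite{U1} is invoked.
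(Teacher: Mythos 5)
Your proposal is correct and is essentially the paper's own proof: the same interpolating family obtained by concatenating $K$ with a rescaled copy of $H$, the same rigidity argument (non-degenerate spectrality plus $L^{(1,\infty)}$-continuity forcing the locally constant value in a measure-zero spectrum), and the same endgame via homotopy invariance and the triangle inequality, yielding in effect $\rho(H\#K;1)=\rho(K;1)$. Two bookkeeping points you should fix when writing it up: the second factor of the concatenation must be the time-rescaled Hamiltonian $sH(st,x)$, whose time-$1$ map is $\phi^s_H$, so that hypothesis (2) applies --- with literally $sH$ the identity $Fix(\phi^1_{sH}\phi^1_K)=Fix(\phi^1_K)$ is not guaranteed for time-dependent $H$; and the homotopy invariance property in Proposition \ref{Properties of Spectral Invariants} is stated only for normalized Hamiltonians, which is precisely why the paper interpolates with the normalized $\tilde H=H-c(t)$ and sees the spectrum drift by $\int_0^s c(t)\,dt$, whereas in your un-normalized family the spectrum is frozen but each application of homotopy invariance requires first normalizing and checking that the time-integrated means of the two Hamiltonians agree (they do, since $\int_M (H\#K)_t\,\omega^n=\int_M H_t\,\omega^n+\int_M K_t\,\omega^n$).
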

   \begin{proof}
   Note that the 2nd and the 3rd assumptions imply that for each $t \in [0,1]$ the Hamiltonian diffeomorphism $\phi^t_H\circ\phi^1_K$ coincides with $\phi^1_K$ on a neighborhood of all its fixed points.  Hence, it follows, from the non-degeneracy of $\phi^1_K$, that $\phi^t_H\circ\phi^1_K$ is non-degenerate as well.  We may also assume that $K$ is normalized because, as was mentioned earlier, $\gamma(\cdot)$ does not distinguish between Hamiltonians that differ by a function of time.
   
   Let $\tilde{H}$ denote the normalization of $H$.  Clearly, $\tilde{H}(t,x) = H(t,x) - c(t)$, where $c(t)= \frac{\int_{M} H(t,x) \omega^n}{vol(M)}$.  
   
   Let $\alpha: [0, \frac{1}{2}] \rightarrow [0,1]$ denote a smooth, non-decreasing map from $[0, \frac{1}{2}]$ onto $[0,1]$ which equals zero on a neighborhood of zero, and equals $1$ on a neighborhood of $\frac{1}{2}$. Let $$L_s(t,x) =  \left\{ \begin{array}{ll}
         \alpha^{'}(t)K(\alpha(t),x) & \mbox{if $0 \leq t \leq \frac{1}{2}$};\\
         s\alpha^{'}(t-\frac{1}{2})\tilde{H}(s\alpha(t-\frac{1}{2}),x) & \mbox{if $\frac{1}{2} \leq t \leq 1 $}.\end{array} \right. $$
   Then, $\phi^1_{L_s} = \phi^s_{\tilde{H}} \circ \phi^1_{K}$.  This Hamiltonian diffeomorphism is non-degenerate by the discussion in the first paragraph of the proof.  Hence, $\rho(L_s;1) \in Spec(L_s)$, by the spectrality property of spectral invariants.  
   
   Next, we'll show that $\rho(L_s;1) = \rho(K;1) + \int_{0}^{s}c(t) dt$.  Let $[z,u] \in Crit(\mathcal{A}_{L_s})$.  Then, the second and the third assumptions imply that $$z(t) = \left\{ \begin{array} {ll} 
                                         \phi_K^{\alpha(t)}(z_0) & \mbox{if $0 \leq t \leq \frac{1}{2}$};\\
                                         z_0& \mbox{if $\frac{1}{2} \leq t \leq 1 $}.\end{array} \right. $$
                                         
   where $z_0 = z(0)$.  Thus, $$\mathcal{A}_{L_s}([z,u]) = - \int L_s(t,z(t))dt \text{ }- \int_{u} \omega$$
         $$ = - \int_{0}^{\frac{1}{2}} \alpha^{'}(t)K(\alpha(t), \phi_K^{\alpha(t)}(z_0)) dt \text{ }
             - \int_{\frac{1}{2}}^{1} s\alpha^{'}(t-\frac{1}{2})\tilde{H}(s\alpha(t-\frac{1}{2}),z_0) dt \text{ }
              - \int_{u} \omega $$                
          $$ = - \int_{0}^{1} K(t, \phi_K^t(z_0)) dt \text{ }
             - \int_{\frac{1}{2}}^{1} s\alpha^{'}(t - \frac{1}{2}) c(s\alpha(t - \frac{1}{2}),z_0) dt \text{ }- \int_{u} \omega $$
          $$ = - \int_{0}^{1} K(t, \phi_K^t(z_0) ) dt \text{ } - \int_{u} \omega \text{ } + \int_{0}^{s}  c(t) dt$$
          $$ = \mathcal{A}_{K}([\phi_K^t(z_0),u])\text{ } + \int_{0}^{s}  c(t) dt.$$
          
    So, we conclude that $Spec(L_s) = Spec(K) + \int_{0}^{s}c(t) dt$.  The continuous function $\rho(L_s;1) - \int_{0}^{s}c(t) dt$ (as a functions of s) takes values in the nowhere dense, measure zero set $Spec(K)$, and therefore it is constant, i.e., $\rho(L_s;1) = \rho(L_0;1) + \int_{0}^{s}c(t) dt = \rho(K;1) + \int_{0}^{s}c(t) dt$.  Here, we have used the homotopy invariance property of spectral invariants to conclude that $\rho(L_0;1) = \rho(K;1).$
    
    The Hamiltonian paths $\phi^t_{L_1}$ and $\phi^t_{\tilde{H}\#K}$ are homotopic rel. endpoints, and the Hamiltonians $L_1, \text{ }\tilde{H}\#K$ are both normalized.  Thus, by the homotopy invariance property, 
    $$\rho(\tilde{H}\#K;1) = \rho(L_1;1) = \rho(K;1) + \int_{0}^{1}c(t) dt.$$
    
    We then have:  $\rho(\tilde{H};1) \leq \rho(\tilde{H}\#K;1) + \rho(\bar{K};1) = \rho(K;1) + \rho(\bar{K};1)+ \int_{0}^{1}c(t) dt.$  Because $\rho(\tilde{H};1) = \rho(H;1) + \int_{0}^{1}c(t) dt$, by part \ref{Shift} of Proposition \ref{Properties of Spectral Invariants}, we conclude that
    $$ \rho(H;1) \leq \rho(K;1) + \rho(\bar{K};1) = \gamma(K).$$
   \end{proof}
   
   \begin{exmp} \label{Discontinuity Example}
   In Theorem \ref{Lipschitz Estimate}, we consider Hamiltonian paths $\phi^t_H$ which fix the points of a ball $B$ for all time, i.e., $\phi^t_H(p)=p \;\; \forall (t,p) \in [0,1]\times B$.  As one can see from the statement of the theorem, the generating Hamiltonian $H$ is taken to be the unique Hamiltonian which vanishes on $B$, i.e., $H(t, p) = 0 \;\; \forall (t,p) \in [0,1]\times B$.  This is our way of ``normalizing" generating Hamiltonians for such Hamiltonian paths.  The usual normalization procedure is different than ours; it requires the generating Hamiltonian to satisfy:  $\int_{M}H(t,\cdot)\omega^n = 0$ for each $t\in [0,1]$.  In this example, we demonstrate that Theorem \ref{Lipschitz Estimate} does not hold if the generating Hamiltonians are required to be normalized in the usual sense of normalization.
   
 Let $F$ be a smooth, time-independent Hamiltonian, supported inside a Darboux chart $(U,x,y)$ such that $\int_{M}F=1.$ Let $F_k = k^{2n}F(kx, ky),$ where $2n$ is the dimension of the manifold.  Note that $Supp(F_k)$ shrinks to a point and thus $d_{C^0}^{path}(Id, \phi^t_{F_k})$ converges to $0$.   Also, note that $\int_{M} F_k = 1$, so these Hamiltonians are not normalized.  
   
   Corollary 3.3 in \cite{U1} states that $\rho(F_k;1) \leq e(Supp(F_k))$, where $e(Supp(F_k))$ is the displacement energy of support of $F_k$.  Applying the above mentioned result to $\bar{F_k}$ we get $\rho(\bar{F_k};1) \leq e(Supp(F_k))$, which combined with the fact that $0 \leq \rho(F_k;1) + \rho(\bar{F_k};1)$, implies $- \rho(F_k;1)  \leq e(Supp(F_k))$.  Hence, we have $$| \rho(F_k;1) |  \leq e(Supp(F_k)).$$
   
   Similarly, one can show that 
   $\left| \rho(\bar{F_k};1) \right| \leq e(supp(F_k)) $. 
   
   Since the sets $Supp(F_k)$ shrink to a point, $e(Supp(F_k))$ converges to $0$.   Thus, $\left| \rho(F_k;1) \right| $ and $\left| \rho(\bar{F_k};1) \right|$ both converge to zero.    
   
   Let $H_k$ be the Hamiltonian obtained by normalizing $F_k$, i.e., $H_k = F_k - 1$.  Then, 
     $$\lim_{k\to \infty} \rho(H_k; 1)= \lim_{k\to \infty}\rho(F_k - 1;1)  =  \lim_{k\to \infty}\rho(F_k;1) + 1= 1.$$
     
     Similarly, we see that $$\lim_{k\to \infty} \rho(\bar{H}_k;1) = -1.$$
\end{exmp}

   \begin{exmp} \label{Sharpness of the Lipschitz Estimate}
      In this example, we will show that the (locally) Lipschitz estimate obtained in Theorem \ref{Lipschitz Estimate} is sharp in the sense that it can not be improved to a (locally) H\"older estimate of H\"older exponent larger than $1$, i.e., the following estimate, for $H$ as in Theorem \ref{Lipschitz Estimate},  can only hold if $\alpha \leq 1$:
      
     \begin{equation}\label{eq: Holder Estimate}
 	      |\rho(H;1)| \leq C \, (d_{C^0}^{path}(Id, \phi^t_H))^{\alpha}.
 	   \end{equation}
 	   
      Our example is for the case of surfaces, but it can easily be generalized to higher dimensions.  Let $U$ denote a Darboux chart on a surface $(\Sigma, \omega)$, and assume that  $\omega = r dr \wedge d\theta$, in $U$.  Let $a$ be a small enough positive number such that (an embedding of) the disk of radius $a$ is contained in $U$.  Pick a smooth function $h:[0,a] \rightarrow \mathbb{R}$ such that $h \equiv -a$ on $[0, \epsilon]$, $h$ is increasing on $(\epsilon, a-\epsilon)$ , and $h \equiv 0$ on ($ a -\epsilon, a)$, where $\epsilon$ is picked to be sufficiently small.  Extend $h$ to $\Sigma$ by setting it to be zero outside the disk of radius $a$.  Note that $X_h(r, \theta) := h'(r) \frac{\partial}{\partial \theta}$, and hence $\Vert X_h(r, \theta) \Vert = r |h'(r)| \leq C$, for some constant $C$.  This implies that for each $s\in [0,1]$ we have $d_{C^0}^{path}(Id,  \phi^s_h) \leq s\,C$.  Define a sequence of Hamiltonians $H_i : = \frac{1}{i} h$.  The above discussion implies that
       $$d_{C^0}^{path}(Id, \phi^t_{H_i}) \leq \frac{1}{i} \, C.$$
      
        Observe that for $i$ large enough $H_i$ is $C^{\infty}$-small, and hence it has no non-trivial contractible periodic orbits of period at most $1$.  Proposition 4.1 in \cite{U1}, states that if a Hamiltonian, $K$, has no non-trivial contractible periodic orbits of period at most one then $\rho(K;1) = - \min_{M} K$.  Hence, $$ \rho(H_i;1) = - \min_{M} H_i = \frac{a}{i}.$$
      
      We conclude that the estimate (\ref{eq: Holder Estimate}) can only hold for $\alpha \leq 1$.

   \end{exmp}
   
\section{{\bf The case of surfaces: spectral norm v.s. $C^0$-norm}} \label{Surfaces}

Our main objective in this section is to prove Theorems \ref{Gamma Norm}, \ref{Continuity of Gamma} and Lemma \ref{Contractibility of small loops}.  Throughout this section $(\Sigma, \omega)$ denotes a surface equipped with an area form $\omega$.  A disk in $\Sigma$ is the image of an area preserving embedding of $D^2_r:= \{(x,y) \in \mathbb{R}^2 : x^2 + y^2 \leq r \}.$

\subsection{{\bf A fragmentation theorem}}

 To prove Theorem \ref{Gamma Norm} we will employ a fragmentation theorem for $C^0$-small Hamiltonian diffeomorphisms of a surface.  In the case of a surface with boundary, $Ham(\Sigma)$ denotes the group of Hamiltonian diffeomorphisms generated by Hamiltonians that vanish near $\partial \Sigma$.  Also, recall that if a surface $\Sigma$ has non-empty boundary then it can be obtained by attaching a number of $1$-handles to a disk. 
  
  \begin{prop} ($C^0$-Fragmentation, see \cite{EPP} section 1.6.2) \label{Fragmentation}
   Let $\Sigma$ denote a compact surface.  There exists a $C^0$-neighborhood $\nu$ of the identity in $Ham(\Sigma)$ and a finite covering of $\Sigma$ consisting of $N$ disks $(D_i)_{1 \leq i \leq N}$ with the property that any $\phi \in \nu$ can be written as a composition $\phi = \phi_1 \cdots \phi_N$, where each $\phi_i$ is supported in one of the disks $D_j$ and satisfies the estimate 
                           $$d_{C^0}(Id, \phi_i) \leq C \, (d_{C^0}(Id, \phi))^{2^{1-N}}, $$
   where $C$ is a constant.  Furthermore, 
   \begin{enumerate}
   \item  If $\partial \Sigma \neq \emptyset,$ then $N= l+1$, where $l$ denotes the number of $1$-handles needed to obtain $\Sigma$ from a disk.
   \item If $\partial \Sigma = \emptyset,$ then $N = 2g+2$.  
   \end{enumerate}

  \end{prop}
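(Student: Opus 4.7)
My plan is to follow the handle-peeling strategy of Entov--Polterovich--Py (\cite{EPP}, Section 1.6.2). The idea is to fix a covering of $\Sigma$ by $N$ disks associated to a system of cutting curves, and inductively peel off one Hamiltonian factor per cut, each time losing at most a square root in $C^0$-size. For closed $\Sigma$ of genus $g$, one treats $\Sigma$ as the union of a small disk $D_0$ and the complementary surface-with-boundary $\Sigma \setminus D_0$ (which is built from a disk by attaching $2g$ one-handles), which accounts for the count $N = (2g+1) + 1 = 2g+2$; the bordered case is the core case to prove.

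In the bordered case ($\Sigma$ built from a disk by $l$ one-handles), I would fix $l$ properly embedded arcs $\gamma_1,\dots,\gamma_l$ whose successive cutting reduces $\Sigma$ to a single disk $\Delta$, and let $D_i$ be a thin rectangular neighborhood of $\gamma_i$ for $i \leq l$, with $D_{l+1} := \Delta$; these $N = l+1$ disks cover $\Sigma$. The engine of the argument is a straightening lemma: for $\psi \in Ham(\Sigma)$ supported in the partially cut subsurface $\Sigma_i$ with $d_{C^0}(Id,\psi) \leq \epsilon_i$, one constructs $\phi_i$ supported in $D_i$ such that $\phi_i^{-1}\psi$ is supported in $\Sigma_{i+1}$ (the subsurface obtained by cutting along $\gamma_i$), with both $\phi_i$ and $\phi_i^{-1}\psi$ of $C^0$-displacement at most $C\sqrt{\epsilon_i}$. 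Iterating $l$ times produces a factorization $\phi = \phi_1 \phi_2 \cdots \phi_{l+1}$ with $\phi_{l+1}$ supported in $\Delta$, and the recursion $\epsilon_{i+1} \leq C\sqrt{\epsilon_i}$ starting from $\epsilon_1 = \epsilon := d_{C^0}(Id,\phi)$ gives $\epsilon_i \lesssim \epsilon^{2^{1-i}}$, hence the stated bound $d_{C^0}(Id,\phi_i) \leq C'\epsilon^{2^{1-N}}$ uniformly in $i$.

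The heart of the proof, and the main obstacle, is the straightening lemma with its square-root loss. To construct $\phi_i$, I would work locally on $D_i$: since $D_i$ is simply connected and $\psi$ is area-preserving, the displacement field of $\psi$ admits a stream function $G$ of oscillation of order $\epsilon_i$; I would then modify $G$ by a smooth cutoff so that it vanishes outside $D_i$ and define $\phi_i$ as the time-one flow. Choosing the cutoff so that $G$ is unchanged in a core neighborhood of $\gamma_i$ forces $\phi_i$ to equal $\psi$ there, whence $\phi_i^{-1}\psi$ is identity near $\gamma_i$ and thus supported in $\Sigma_{i+1}$. The $\sqrt{\epsilon_i}$ arises from balancing two competing constraints: the core must have width at least $2\epsilon_i$ so that it contains $\psi(\gamma_i)$ (forcing $\phi_i = \psi$ along $\gamma_i$), while in the tapering band of width $w$ the Hamiltonian gradient is of order $\epsilon_i/w$, contributing $C^0$-displacement of order $\epsilon_i/w$; the optimal choice $w \sim \sqrt{\epsilon_i}$ balances these and yields the bound. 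A final piece of bookkeeping is to choose $\nu$ small enough that at every stage of the iteration $\psi_i(\gamma_i)$ stays inside $D_i$ and is isotopic to $\gamma_i$ rel endpoints, so the straightening can be performed within the fixed disk $D_i$ at every step.
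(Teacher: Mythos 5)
Your global strategy is the same as the paper's (and EPP's): peel off one factor per handle via an extension/straightening step localized in a rectangle around a cutting arc, lose a square root in the $C^0$-size at each step, reduce the closed case to the bordered case by first splitting off a factor supported in a disk, and track the recursion $\epsilon_{i+1}\leq C\sqrt{\epsilon_i}$ to get the exponent $2^{1-N}$. That skeleton is fine and matches Part (1)/(2) of the paper's argument.

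The gap is in the engine, your ``straightening lemma.'' You construct $\phi_i$ by taking a ``stream function $G$ of the displacement field of $\psi$'' and cutting it off. Two steps here do not work. First, a merely $C^0$-small area-preserving diffeomorphism is not the time-one map of an autonomous Hamiltonian built from its displacement field: $C^0$-smallness gives no control on derivatives, so you cannot interpolate $\psi$ by a vector field with oscillation $O(\epsilon_i)$, and in general no such $G$ with $\phi^1_G=\psi$ on $D_i$ exists. Second, even granting a generating Hamiltonian, multiplying it by a cutoff does not make the time-one map agree with $\psi$ on the core: trajectories of the truncated flow need not stay where the Hamiltonian is unmodified, so ``$G$ unchanged near $\gamma_i$ forces $\phi_i=\psi$ there'' is unjustified. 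Finally, you need $\phi_i^{-1}\psi$ not just to be the identity near $\gamma_i$ but to lie in $Ham(\Sigma_{i+1})$ (and $\phi_i\in Ham(D_i)$); this requires the zero-flux/zero-area condition on $\gamma_i$ and its image, which you never invoke. In the actual proof the square root has a different source: one first extends $\psi$ near the core as a plain (non--area-preserving) compactly supported diffeomorphism $f$ with $d_{C^0}(Id,f)\leq C\epsilon$, and then corrects the pulled-back area form $f^*\omega$ back to $\omega$ by a diffeomorphism equal to the identity on the core, built from a grid of squares of side $r=\epsilon^{1/2}$ together with a quantitative Moser argument (the bound $d_{C^0}(Id,f)\leq C\Vert\chi-1\Vert_{C^0}$ for $f^*\omega_2=\omega_1$, $\omega_2=\chi\omega_1$); the $\sqrt{\epsilon}$ is the diameter of those squares, not a cutoff-band optimization. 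Your balancing heuristic ($\epsilon_i/w$ versus core width) does not substitute for this area-correction step, so the central lemma of your proposal remains unproved.
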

  
   This result is a slight modification of a statement that appears in section 1.6 of \cite{EPP}.  We will discuss the proof of this result in section \ref{Proof of the Fragmentation Theorem }.  
   
  \subsection{{\bf Proof of Theorem \ref{Gamma Norm}}}
  
  The proof of Theorem \ref{Gamma Norm} will use the following lemma:
  \begin{lem}\label{Local Path Connectedness}
      
      Suppose $\psi \in Ham(B^{2n}_r)$.  There exists a Hamiltonian $H: [0,1] \times B^{2n}_r \rightarrow \mathbb{R}$ such that $\psi = \phi^1_H$ and $d_{C^0}^{path}(Id, \phi^t_H) \leq d_{C^0}(Id, \psi)$.
  \end{lem}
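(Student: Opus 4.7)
The plan is to use the Alexander (conformal) rescaling to construct an explicit Hamiltonian isotopy from the identity to $\psi$ whose $C^0$-displacement from the identity grows linearly with the time parameter. Since $\psi$ is compactly supported in $B^{2n}_r$, choose $R < r$ with $\mathrm{supp}(\psi) \subset \overline{B^{2n}_R}$, and for $s \in (0,1]$ define
$$\psi_s(x) := s\,\psi(x/s), \qquad \psi_0 := \mathrm{id}.$$
The chain rule gives $d\psi_s(x) = d\psi(x/s)$, so since the standard symplectic form on $\mathbb{R}^{2n}$ has constant coefficients and $\psi$ is itself symplectic, $\psi_s^*\omega = \omega$; moreover $\psi_s$ is supported in $\overline{B^{2n}_{sR}} \subset B^{2n}_r$, so $\psi_s \in Ham(B^{2n}_r)$. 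The substitution $y = x/s$ gives the crucial estimate
$$d_{C^0}(\mathrm{id}, \psi_s) = \sup_x |x - s\psi(x/s)| = s\sup_y |y - \psi(y)| = s\,d,$$
where $d := d_{C^0}(\mathrm{id}, \psi)$.

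If $d = 0$ then $\psi = \mathrm{id}$ and one takes $H = 0$; otherwise set $\epsilon := \min(1,\, d/(2R))$. The map $s \mapsto \psi_s$ is smooth on $(0,1]$ but only $C^0$-continuous at $s = 0$, so the raw Alexander isotopy does not extend to a smooth path on all of $[0,1]$. I would remedy this by a two-piece construction. Since $Ham(B^{2n}_{\epsilon R})$ is path-connected, there exists a smooth Hamiltonian $L$ compactly supported in $B^{2n}_{\epsilon R}$ whose flow $\phi^t_L$ is supported in $B^{2n}_{\epsilon R}$ for all $t$ and satisfies $\phi^1_L = \psi_\epsilon$; the support condition forces $d_{C^0}(\mathrm{id}, \phi^t_L) \leq \mathrm{diam}(B^{2n}_{\epsilon R}) = 2\epsilon R \leq d$. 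Choose smooth reparametrizations $\beta : [0,\epsilon] \to [0,1]$ with $\beta(0)=0$, $\beta(\epsilon)=1$, and $\alpha : [\epsilon, 1] \to [\epsilon, 1]$ with $\alpha(\epsilon)=\epsilon$, $\alpha(1)=1$, both flat to infinite order at $t = \epsilon$, and define
$$\phi^t := \begin{cases} \phi^{\beta(t)}_L & t \in [0, \epsilon], \\ \psi_{\alpha(t)} & t \in [\epsilon, 1]. \end{cases}$$

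The flatness of $\beta$ and $\alpha$ at $t = \epsilon$ forces every $t$-derivative of each piece to vanish there, and both expressions agree at $\psi_\epsilon$; hence the concatenated family is jointly smooth in $(t,x)$ and is generated by a smooth Hamiltonian $H : [0,1] \times B^{2n}_r \to \mathbb{R}$ with $\phi^1_H = \psi$. The $C^0$-bound $d_{C^0}^{path}(\mathrm{id}, \phi^t_H) \leq d$ follows piecewise: from $2\epsilon R \leq d$ on $[0,\epsilon]$, and from $d_{C^0}(\mathrm{id}, \psi_{\alpha(t)}) = \alpha(t)\,d \leq d$ on $[\epsilon,1]$.

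The main obstacle is the failure of joint smoothness of the Alexander isotopy at $(s,x) = (0,0)$: the second $x$-derivative $\partial_x^2 \psi_s(x) = s^{-1}\, d^2\psi(x/s)$ blows up at the origin as $s \to 0$ whenever $\psi$ does not have trivial $2$-jet there, so no flat reparametrization of $s$ alone can smoothly glue $\psi_s$ to the identity across $s=0$. The two-piece construction, with a genuinely different smooth Hamiltonian $L$ on the initial sub-interval $[0,\epsilon]$ chosen to match $\psi_\epsilon$ within a small ball, is what circumvents this difficulty while still delivering the $C^0$-bound inherited from the small diameter of $B^{2n}_{\epsilon R}$.
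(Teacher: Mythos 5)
Your construction is essentially the paper's: the Alexander rescaling $\psi_s(x)=s\psi(x/s)$ on $[\epsilon,1]$ with the linear bound $d_{C^0}(\mathrm{id},\psi_s)\le s\,d_{C^0}(\mathrm{id},\psi)$, an initial leg supported in a ball of radius of order $\epsilon\sim d/2$ whose diameter supplies the $C^0$ bound, and a smooth concatenation of the two pieces. The one step that is not justified as written is the existence of your Hamiltonian $L$. You infer ``$\psi_s\in Ham(B^{2n}_r)$'' from $\psi_s$ being symplectic with small support, and you produce $L$ compactly supported in $B^{2n}_{\epsilon R}$ by invoking path-connectedness of $Ham(B^{2n}_{\epsilon R})$; but path-connectedness is beside the point (and circular): what you actually need is that $\psi_\epsilon$ is an \emph{element} of $Ham(B^{2n}_{\epsilon R})$, i.e.\ that it admits a generating Hamiltonian compactly supported in the small ball, and being a compactly supported symplectomorphism does not by itself give membership in the compactly supported Hamiltonian group.

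The fix is exactly the observation the paper makes explicit. Let $G$ be a Hamiltonian compactly supported in $B^{2n}_r$ with $\phi^1_G=\psi$, and choose $R<r$ so that $\mathrm{supp}\,G_t\subset B^{2n}_R$ for all $t$ (not merely $\mathrm{supp}\,\psi\subset \overline{B^{2n}_R}$). Then the rescaled Hamiltonian $G_s(t,x)=s^2G(t,x/s)$ for $|x|\le sR$, extended by zero, is supported in $B^{2n}_{sR}$ and satisfies $\phi^1_{G_s}=\psi_s$. This simultaneously proves $\psi_s\in Ham(B^{2n}_r)$ and hands you $L:=G_\epsilon$ directly, whose flow is supported in $B^{2n}_{\epsilon R}$, so your diameter bound $2\epsilon R\le d$ goes through; indeed the paper's initial leg is precisely the flow $\phi^t_{G_\epsilon}$ suitably reparametrized. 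With that substitution, the remainder of your argument (the flat reparametrizations at $t=\epsilon$, the piecewise $C^0$ estimates, and the degenerate cases $d=0$ or $\epsilon=1$) is correct and coincides with the paper's proof.
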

  
  \begin{rem}\label{Local Path Connectedness General}
      Let $B_r$ denote the image of a symplectic embedding of $B^{2n}_r$ into $M$. Suppose $\psi \in Ham(B_r)$, i.e., there exists a Hamiltonian $G$ whose support is contained in $B_r$ and $\psi = \phi^1_G$.
      One can easily check that the proof of Lemma \ref{Local Path Connectedness} can be adapted to obtain the following statement: 
      
      There exists a Hamiltonian $H$ supported in $B_r$ such that $\psi = \phi^1_H$ and $d_{C^0}^{path}(Id, \phi^t_H) \leq C \, d_{C^0}(Id, \psi),$ where $d_{C^0}$ denotes a $C^0$-distance on $M$ and $C$ is an appropriately chosen constant.
    \end{rem}
  
  Postponing the proof of the above lemma, we now prove Theorem \ref{Gamma Norm}:

  \begin{proof} (Theorem \ref{Gamma Norm})
     We pick $\delta$ small enough so that we have $\phi \in \nu$, where $\nu$ is the neighborhood of the identity from Proposition \ref{Fragmentation}.  Applying Proposition \ref{Fragmentation}, we obtain disks $(D_i)_{1 \leq i \leq 2g+2}$ and Hamiltonian diffeomorphisms $\phi_i \in Ham(D_j)$ such that $\phi = \phi_1 \cdots \phi_{2g+2}$, and 
     \begin{equation} \label{Estimate From Fragmentation}
        d_{C^0}(Id, \phi_i) \leq C_1 \, (d_{C^0}(Id, \phi))^{2^{-2g-1}}.
    \end{equation}
     
      Lemma \ref{Local Path Connectedness} and Remark \ref{Local Path Connectedness General} imply that we can find Hamiltonians $H_1, \cdots, H_{2g+2}$ such that $H_i$ is supported in the same disk as $\phi_i$, $\phi_i = \phi^1_{H_i}$, and 
     
     \begin{equation} \label{Estimate From Path Connectedness}
       d_{C^0}^{path}(Id, \phi^t_{H_i}) \leq C_2 \, d_{C^0}(Id, \phi_i).
     \end{equation}
  
     Assuming $\delta$ is sufficiently small, we can apply Corollary \ref{Lipschitz Estimate for Gamma Norm} to each Hamiltonian $H_i$ and obtain
      $$\gamma(H_i) \leq A_i \, d_{C^0}^{path}(Id, \phi^t_{H_i}),$$
     where $A_i$ is a constant depending on the disk $D_j$ which contains the support of $H_i$.  Combine the above inequality with the estimates (\ref{Estimate From Fragmentation}) and (\ref{Estimate From Path Connectedness}) to get
  \begin{equation} \label{Combined Estimate for the Hamiltonians}
   \gamma(H_i) \leq C \, (d_{C^0}(Id, \phi))^{2^{-2g-1}},
  \end{equation}
   where $C$ is an appropriately chosen constant.  Let $H:= H_1 \# \cdots \# H_{2g+2}$, so that $\phi = \phi^1_H$.  The triangle inequality for spectral invariants and estimate (\ref{Combined Estimate for the Hamiltonians}) imply that $$ \gamma(H) \leq \Sigma_{i=1}^{2g+2} \gamma(H_i) \leq (2g+2) C\, (d_{C^0}(Id, \phi))^{2^{-2g-1}},$$
   from which the result follows.
  \end{proof}
  
  \begin{rem} \label{Local Path Connectedness on Surfaces}
    Observe that in the above proof we have shown that if $\phi \in \nu$, where $\nu$ is the neighborhood from Proposition \ref{Fragmentation}, then there exists a Hamiltonian $H$ such that $$  \phi^1_H = \phi  \text{, }  d_{C^0}^{path}(Id, \phi^t_H) \leq A \, (d_{C^0}(Id, \phi))^{2^{-2g-1}} \text{, and } \gamma(H) \leq C \, d_{C^0}(Id, \phi))^{2^{-2g-1}},$$
    for appropriately chosen constants $A, \; C$.
  \end{rem}
  
  Finally, we give a proof for Lemma \ref{Local Path Connectedness}.
  \begin{proof}(Lemma \ref{Local Path Connectedness})
  WLOG, we may assume that $r=1$.  Indeed, the proof presented below for $B^{2n}_1$ can be rescaled to provide a proof for any value of $r$.  Take a positive constant $\epsilon$ such that $\epsilon \leq \frac{d_{C^0}(Id, \psi)}{2}$.
    For each $s \in [\epsilon, 1]$ we define a Hamiltonian diffeomorphism $\phi^s$ as follows:
    $$\phi^s(x) =  \left\{ \begin{array}{ll}
         s\psi(\frac{x}{s}) & \mbox{if $|x| \leq s $};\\
         x & \mbox{if $|x| \geq s$}.\end{array} \right. $$
    A simple computation shows that $\phi^s \in Ham(B^{2n}_r)$, and in fact if $G(t,x)$ is a Hamiltonian, supported in $B^{2n}_1$, which generates $\psi$ then $\phi^s$ is the time $1$ map of the flow of the following Hamiltonian:
    $$G_s(t,x) =  \left\{ \begin{array}{ll}
         s^2G(t,\frac{x}{s})& \mbox{if $|x| \leq s $};\\
         0 & \mbox{if $|x| \geq s$}.\end{array} \right. $$
         
    Note that $\phi^s$ is obtained by rescaling $\psi$ by a factor of $s$ and hence it can easily be checked that for each $s \in [\epsilon, 1]$ we have 
    \begin{equation}\label{The Path is Small}
      d_{C^0}(Id, \phi^s) \leq s \, d_{C^0}(Id, \psi).
    \end{equation}
    
    It remains to define $\phi^s$ for $s\in[0,\epsilon]$.  We do so by the formula:$$\phi^s(x) := \phi^{\frac{s}{\epsilon}}_{G_{\epsilon}}.$$
    We obtain $\phi^s$ for all $s \in [0,1]$ by smoothly concatenating the two paths $\phi^s|_{s\in[0,\epsilon]}$ and $\phi^s|_{s\in[\epsilon,1]}$.  Note that the Hamiltonian path $\phi^s \text{ } (s\in [0,\epsilon])$ is supported in the ball of radius $\epsilon$ and hence its distance from the identity is bounded by $2 \epsilon$ which is smaller than $d_{C^0}(Id, \psi)$.  This combined with (\ref{The Path is Small}) implies that the whole path $\phi^s \text{ } (s\in [0,1])$ satisfies the inequality $$d_{C^0}(Id, \phi^s) \leq d_{C^0}(Id, \psi).$$  
    Let $H$ be the Hamiltonian that generates $\phi^s$.  Clearly $H$ satisfies all the required conditions.
  \end{proof}
  
  \subsection{{\bf Proofs for Theorem \ref{Continuity of Gamma} and Lemma \ref{Contractibility of small loops}}}
  
  We will now provide proofs for Theorem \ref{Continuity of Gamma} and Lemma \ref{Contractibility of small loops}.
  
   \begin{proof}(Proof of Theorem \ref{Continuity of Gamma})
   It is well known that $Ham(\Sigma)$ is simply connected if $\Sigma$ has positive genus.  See chapter 7 of \cite{P} for a proof of this fact.  This implies that $\Vert \phi^1_H \Vert_{\gamma} = \gamma(H)$.  Hence, in the case of surfaces of positive genus we get, from Theorem \ref{Gamma Norm}, that if $d_{C^0}(Id, \phi^1_H) \leq \delta$ then $\gamma(H)$ satisfies the required inequality.  
   
     For the rest of this proof we assume that $\Sigma = S^2$.  We pick $\delta$ such that the entire path $\phi^t_H$ lies in the neighborhood $\nu$ from Proposition \ref{Fragmentation}.  According to Remark \ref{Local Path Connectedness on Surfaces} there exists a Hamiltonian $K$ such that 
     $$\phi^1_K = \phi^1_H \text{, } d_{C^0}^{path}(Id, \phi^t_K) \leq A\, \delta^{2^{-2g-1}} \text{, and } \gamma(K) \leq  C\, d_{C^0}(Id, \phi^1_H)^{2^{-2g-1}},$$
     for some constants $A$ and $C$. We're done if we show that $\gamma(H) = \gamma(K).$  By the homotopy invariance property of spectral invariants it is sufficient to show that the following Hamiltonian loop is contractible:
     
      $$\lambda(t) =  \left\{ \begin{array}{ll}
         \phi^{2t}_H & \mbox{if $0 \leq t \leq \frac{1}{2} $};\\
         \phi_K^{1-2t} & \mbox{if $ \frac{1}{2} \leq t \leq 1$}.\end{array} \right. $$
      Note that 
              $$d_{C^0}^{path}(Id,\lambda) \leq \max(\delta, A\, \delta^{2^{-2g-1}}),$$
      hence, by picking a sufficiently small $\delta$, we can ensure that the Hamiltonian path $\lambda(t)$ is $C^0$-small enough for the application of Lemma \ref{Contractibility of small loops}, which implies that $\lambda(t)$ is indeed contractible.   
   
   \end{proof}
   
   Finally, we prove Lemma \ref{Contractibility of small loops}:
   \begin{proof}(Proof of Lemma \ref{Contractibility of small loops})
   Once again, because $Ham(\Sigma)$ is simply connected for surfaces of positive genus, we assume that $\Sigma = S^2$. 
   We set $S^2 = \{(x,y,z) \in \mathbb{R}^3 : x^2 + y^2 + z^2 = 1 \}$ and equip it with the standard area form. 
   It is well known that $\pi_1(Ham(S^2)) = \frac{\mathbb{Z}}{2\mathbb{Z}}$, with the non trivial element being the full rotation around the vertical axis; see Chapter 7 of \cite{P}.  Let $f: S^2 \rightarrow \mathbb{R}$ denote the time independent and normalized Hamiltonian generating the mentioned rotation.  One can easily check that $\rho(f;1) = \rho(\bar{f};1) = 2 \pi$, and thus $\gamma(f) = 4 \pi$.
   
   We pick $\delta$ small enough such that if $d_{C^0}(Id, \phi) \leq \delta$ then, by Theorem \ref{Gamma Norm}, $\Vert \phi \Vert_{\gamma} \leq C \, d_{C^0}(Id, \phi)^{\frac{1}{2}}.$  Now suppose that $d_{C^0}^{path}(Id, \phi^t_H) \leq \delta$.  
   We break the interval $[0,1]$ into $N$ equal parts and consider the Hamiltonian paths:
   
    $$\phi^t_{K_i} = (\phi_H^{\frac{i-1}{N}})^{-1} \phi_H^{\frac{t+i-1}{N}} \text{, } 0 \leq t \leq 1 \text{, } 1\leq i \leq N.$$ 
   Note that $\phi^t_H$ is the concatenation of the paths $\phi_H^{\frac{i-1}{N}} \phi^t_{K_i}$.  We pick $N$ large enough to ensure that the paths $\phi^t_{K_i}$ are all $C^{\infty}$ small enough to guarantee that
   $$\gamma(K_i) \leq 1.$$
   
   By Theorem \ref{Gamma Norm}, we can pick Hamiltonians $F_i \text{ } 1 \leq i \leq N$ such that 
   $$\phi^1_{F_i} = \phi_H^{\frac{i}{N}} \text{ and } \gamma(F_i) \leq 2 C\,\delta^{\frac{1}{2}}.$$
   
   Consider the Hamiltonian loops:
   
    $$\lambda_i(t) =  \left\{ \begin{array}{ll}
         \phi^{3t}_{F_{i-1}} & \mbox{if $0 \leq t \leq \frac{1}{3} $};
         \vspace{1 mm}\\
         \phi_H^{\frac{i-1}{N}} \phi_{K_i}^{3(t-\frac{1}{3})} & \mbox{if $ \frac{1}{3} \leq t \leq \frac{2}{3}$};
         \vspace{1 mm}\\
         \phi^{1- 3(t-\frac{2}{3}) }_{F_i} & \mbox{if $ \frac{2}{3} \leq t \leq 1 $},\end{array} \right. $$
         
  \noindent  where $1 \leq i \leq N$ and we assume that $F_0 = 0$.  Observe that $\phi^t_H$ is homotopic to the concatenation of the loops $\lambda_i \text{ } 1 \leq i \leq N$.  Hence, it is sufficient to show that each loop $\lambda_i$ is contractible.
    
    The loop $\lambda_i$ is homotopic to the composition
    
    $$\phi^t_{F_{i-1}}\phi^t_{K_i} (\phi^1_{F_i})^{-1} \phi^{1-t}_{F_i}.$$
    
   Furthermore, the path $(\phi^1_{F_i})^{-1} \phi^{1-t}_{F_i}$ is homotopic with fixed end point to the path $\phi^t_{\bar{F_i}}$.  Therefore, $\lambda_i$ is homotopic to
   
   $$\phi^t_{F_{i-1}} \phi^t_{K_i}  \phi^{t}_{\bar{F_i}}.$$
    
    By the triangle inequality we get that
    $$\gamma(\lambda_i) \leq \gamma(F_{i-1}) + \gamma(K_i) + \gamma(\bar{F_i})$$
    $$\leq 2 C\, \delta^{\frac{1}{2}} + 1 + 2 C\, \delta^{\frac{1}{2}} < 4 \pi,$$
    where the last inequality holds for sufficiently small values of $\delta$.  This implies that, for sufficiently small values of $\delta$, $\lambda_i$ is not homotopic to the full rotation around the central axis of $S^2$, and hence it must be contractible.
   
   \end{proof}
  
\subsection{Proof of the Fragmentation Theorem} \label{Proof of the Fragmentation Theorem }

   This section contains a sketch of the proof of Proposition \ref{Fragmentation}.  This fragmentation result is a slight modification of an assertion that appears in section 1.6.2 of \cite{EPP}.  The mentioned assertion is labeled by $(*)$ in \cite{EPP}. Proposition \ref{Fragmentation} can be extracted from the (very technical) proof that is presented there by making a few modifications.  Hence, we will only outline the argument presented in \cite{EPP} and mention the changes that must be made to that argument to obtain Proposition \ref{Fragmentation}.  In order to make it easier for readers to compare the proof presented here and the original proof of \cite{EPP}
 , we will try to follow the notation and format of the argument in \cite{EPP} as closely as possible.

\begin{center}
{\bf Moser's Trick:}
\end{center}
    The argument in \cite{EPP} repeatedly uses a variation of Moser's trick; see Proposition 5 in section 1.6.1 of \cite{EPP}.  Here we modify part (iii) of that proposition as follows:
   
  {\it Let $\Sigma$ be a compact connected oriented surface, possibly with a non-empty boundary $\partial \Sigma$, and let $\omega_1$,
$\omega_2$ be two area-forms on $\Sigma$. Assume that $\int_\Sigma \omega_1 = \int_\Sigma \omega_2 $. If $\partial\Sigma \neq
\emptyset$, we also assume that the forms $\omega_1$ and $\omega_2$ coincide on $\partial\Sigma$.

Then there exists a diffeomorphism $f: \Sigma\to \Sigma$, isotopic to the identity, such that $f^* \omega_2 = \omega_1$. Moreover,
$f$ can be chosen to satisfy the following properties:

\medskip \noindent (i) If $\partial\Sigma \neq \emptyset$, then $f$ is the identity on $\partial\Sigma$, and if $\omega_1$ and
$\omega_2$ coincide near $\partial\Sigma$, then $f$ is the identity near $\partial\Sigma$.

\medskip
\noindent (ii) If $\Sigma$ is partitioned into polygons (with piecewise smooth boundaries), so that
$\omega_2-\omega_1$ is zero on the 1-skeleton $\Gamma$ of the partition and the integrals of $\omega_1$ and $\omega_2$ over each
polygon are equal, then $f$ can be chosen to be the identity on $\Gamma$.

\medskip 
\noindent (iii) Suppose that $\omega_2 = \chi \omega_1$ for a function $\chi$.  The diffeomorphism $f$ can be chosen
to satisfy the following estimate: $$d_{C^0}(Id, f) \leq C \Vert \chi -1 \Vert_{C^0},$$ for some $C>0$.  
Here, $\Vert \cdot \Vert_{C^0}$ denotes the standard sup norm on functions.

}

\noindent
{\bf Proof:}  Following Moser's trick we consider the path of symplectic forms $\omega_t = \omega_1 + t(\omega_2- \omega_1)$.  
 The form $\omega_2 - \omega_1$ is exact.  Pick a 1-form $\sigma$ such that $d\sigma = \omega_2 - \omega_1$.  
Let $f$ be the time-1 map of the time dependent vector field $X_t$  defined by: $\iota_{X_t} \omega_t =  \sigma$.
Parts (i) and (ii) of the above statement are identical to what appears in \cite{EPP} and are proven there.  To prove 
Part (iii) we must ensure that the 1-form $\sigma$ satisfies $\Vert \sigma \Vert \leq C \Vert \chi -1 \Vert_{C^0}$.
 Lemma 1 of \cite{M}, reduces this to the case where $\omega_2 - \omega_1$ is supported in a rectangle.  In that case
 one can construct $\sigma$ and show that it satisfies the required estimate.

\begin{center}
{\bf The Extension Lemmas} 
\end{center}
 
We will need the following two extension lemmas to prove our fragmentation result.  These lemmas
are modifications of Lemmas 2 \& 3 from section 1.6.1 of \cite{EPP}.

\medskip

\noindent 
{\bf Area-preserving extension lemma for disks:} 
Let $D_1\subset D_2 \subset D\subset \mathbb{R}^2$ be closed disks such that $D_1 \subset {\rm Interior}\ (D_2)\subset D_2\subset {\rm Interior}\ (D)$. Let $\phi: D_2\to D$ be a smooth area-preserving embedding (we assume $D$ is equipped with some area form). If  $\phi$ is sufficiently $C^0$-small, then there exists $\psi\in Ham (D)$ such that 
$$ \left. \psi\right|_{D_1} = \phi \;\;\;\; {\it and } \;\;\;\;  d_{C^0}(Id, \psi) \leq (d_{C^0}(Id, \phi))^{\frac{1}{2}} .$$

\medskip

\noindent 
{\bf Area-preserving extension lemma for rectangles:} 
Let $\Pi = [0,R]\times [-c,c]$ be a rectangle and let $\Pi_1\subset \Pi_2\subset \Pi$ be two smaller rectangles of the
form $\Pi_i = [0,R]\times [-c_i, c_i]$ ($i=1,2$), $0< c_1<c_2<c$.  Let $\phi: \Pi_2\to \Pi$ be an area-preserving embedding (we
assume $\Pi$ is equipped with some area form) such that
\begin{itemize}

\item{} $\phi$ is the identity near $0\times [-c_2, c_2]$ and $R\times
[-c_2, c_2]$.

\item{} The area in $\Pi$ bounded by the curve $[0,R]\times y$ and
its image under $\phi$ is zero for some (and hence for all) $y\in
[-c_2, c_2]$.
\end{itemize}

\noindent If  $\phi$ is sufficiently $C^0$-small, then there exists $\psi\in Ham (\Pi)$ such that
$$\left. \psi \right|_{\Pi_1} = \phi\;\;\;\; {\it and} \;\;\;\; d_{C^0}(Id, \psi) \leq (d_{C^0}(Id, \phi))^{\frac{1}{2}}. $$

\begin{center}
{\bf Proof of Proposition \ref{Fragmentation} }
\end{center}
Postponing the proofs of the above extension lemmas, we will now use them to prove Proposition \ref{Fragmentation}. We will be closely following the proof of $(*)$ in section 1.6.2 of \cite{EPP}. 
\begin{proof}(Proposition \ref{Fragmentation})

   {\bf Part (1):}  We will first establish the result in the case $\partial \Sigma \neq \emptyset$.  It will be proven by induction on the number of 1-handles $l$.  The base case $l=0$ is obvious.  Assume now that the lemma holds for any surface with boundary obtained from the disk by attaching $l$ $1$-handles and suppose $\Sigma_0$ denotes one such surface.  Let $\Sigma$ be a surface obtained from $\Sigma_0$ by attaching one $1$-handle.
   
  As in \cite{EPP}, we pick a diffeomorphism $\varphi : [-1,1]^{2} \to \overline{\Sigma-\Sigma_{0}}$, which is singular at the corners,
  and maps $[-1,1]\times \{-1,1\}$ into the boundary of $\Sigma_{0}$. Let $\Pi_{r}=\varphi([-1,1]\times [-r,r])$ and 
$\Sigma_{1}=\Sigma_{0}\cup \varphi([-1,1]\times \{s, \vert s \vert \geq \frac{1}{4}\})$.  Note that $\Sigma_{1}$ is obtained from the disk by attaching $l$ $1$-handles and hence there exists a neighborhood $\nu_1$ of the identity in $Ham(\Sigma_1)$ such that all $\phi \in \nu_1$ can be fragmented as described in the proposition.

   Suppose that $\phi \in Ham(\Sigma)$ and let $\epsilon = d_{C^0}(Id, \phi)$.  As in \cite{EPP}, 
 assuming that $\epsilon$ is small enough, we apply the extension lemma for rectangles to the chain of rectangles
$\Pi_{\frac{1}{2}}\subset \Pi_{\frac{3}{4}}\subset \Pi_{\frac{7}{8}}$ and to
the restriction of $\phi$ to $\Pi_{\frac{3}{4}}$; note that $\phi$ being Hamiltonian ensures that the hypothesis on 
the curve $[-1,1]\times \{y\}$ is satisfied.  We obtain a diffeomorphism $\phi_1 \in  Ham(\Pi_{\frac{7}{8}})$  which coincides with $\phi$ on $\Pi_{\frac{1}{2}}$. Hence, we can write
  $$\phi = \phi_1 h,$$
  where $h$ is supported in $\Sigma_1$.  The argument in \cite{EPP} shows that $h \in Ham(\Sigma_1)$.  Also, note that we get the following estimates using the inequality from the extension lemma for rectangles: 
  \begin{equation} \label{Estimate on norm of h}
  d_{C^0}(Id, \phi_1) \leq C_1 \epsilon^{\frac{1}{2}} \text{  and   } d_{C^0}(Id, h) \leq C_1 \epsilon^{\frac{1}{2}},
  \end{equation}
  for some constant $C_1$.  Now, by the induction assumption there exist $l+1$ disks $(D_i)_{2 \leq i \leq l+2}$ covering $\Sigma_1$ such that any $\psi \in \nu_1 \subset Ham(\Sigma_1)$ can be fragmented as described by the lemma.  For our surface $\Sigma$ we take the required $l+2$ disks to be $D_1 = \Pi_{\frac{7}{8}}, D_2, \cdots, D_{l+2}$.  It just remains to show that $h$ can be fragmented as required by the proposition: if $\epsilon$ is picked to be sufficiently small, then (\ref{Estimate on norm of h}) guarantees that $h \in \nu_1$.  Hence, $h = \phi_2  \cdots \phi_{l+2}$, where
   $\phi_i \in Ham(D_j) \text{ } (i,j \geq 2)$ and 
   $$d_{C^0}(Id, \phi_i) \leq C_2 \, (d_{C^0}(Id, h))^{2^{-l}} \leq C_3 \epsilon^{2^{-l-1}},$$ \noindent where $C_2, C_3$ are constants.  Note that the neighborhood $\nu$ associated to $\Sigma$ must be picked so that if $\phi \in \nu$, then first,
   the restriction of $\phi$ to $\Pi_{\frac{3}{4}}$ is small enough for the application of the extension lemma for rectangles, and second, the bound on the $C^0$-norm of $h$ from (\ref{Estimate on norm of h}) is small enough to ensure that $h \in \nu_1$.
   This finishes the proof of part (1) of Proposition \ref{Fragmentation}.
   
  \noindent {\bf Part (2):}   Consider a chain of small disks $D_1 \subset D_2\subset D$ embedded in $\Sigma$.  Let $\Sigma_1 = \overline{\Sigma \setminus D_0}$, where $D_0$ is a disk contained in the interior of $D_1$.  If $\phi \in Ham(\Sigma)$ is sufficiently $C^0$-small then we can apply the extension lemma for disks to the chain of disks $D_1 \subset D_2 \subset D$ and $\left. \phi\right|_{D_2}$, exactly as the extension lemma for rectangles was applied in the proof of part (1), and obtain two diffeomorphisms $\phi_1 \in Ham(D)$ and $h \in Ham(\Sigma_1)$ such that 
   \begin{itemize}
   \item $\phi = \phi_1 h$ 
   \item $d_{C^0}(Id, \phi_1) \leq C_1 (d_{C^0}(Id,\phi))^{\frac{1}{2}}$ and $d_{C^0}(Id, h) \leq C_1 (d_{C^0}(Id,\phi))^{\frac{1}{2}}$, for some constant $C_1$.
   \end{itemize}
   The argument from \cite{EPP} ensures that $h \in Ham(\Sigma_1)$.  Observe that $\Sigma_1$ is a surface with boundary which is obtained from the disk by the attachment of $2g$ $1$-handles, and hence if $\phi$ is sufficiently $C^0$-small the result follows by applying part (1) to $h$.
\end{proof}
\begin{center}
{\bf Proofs of the extension lemmas}
\end{center}

The extension lemmas used to prove Proposition \ref{Fragmentation} follow easily, as described in section 1.6.3 of \cite{EPP}, from the following extension lemma for annuli.  This lemma is a modification of Lemma 4 in section 1.6.3 of \cite{EPP}.  The proof of this lemma contains most of the hard work that goes into proving Proposition \ref{Fragmentation}.  Once again, we have tried to follow the argument presented in \cite{EPP} as closely as possible.

\noindent 
{\bf Area-preserving extension lemma for annuli:}
{\it   Let $\mathbb{A} = S^1 \times [-3,3]$ be a closed annulus and let $\mathbb{A}_1 =
S^1\times [-1,1], \; \mathbb{A}_2= S^1\times [-2,2]$ be smaller annuli inside
$\mathbb{A}$. Let $\phi$ be an area-preserving embedding of a fixed open
neighborhood of $\mathbb{A}_1$ into $\mathbb{A}_2$ (we assume that $\mathbb{A}$ is
equipped with some area form $\omega$), so that for some $y\in
[-1,1]$ (and hence for all of them) the curves $S^1\times y$ and
$\phi(S^1\times y)$ are homotopic in $\mathbb{A}$ and the\ area\ in \ $\mathbb{A}$ \ bounded\
by \ $S^1\times y$\ and \ $\phi(S^1\times y)$\  is\ $0$.

\noindent If $\phi$ is sufficiently $C^0$-small, then there exists $\psi\in Ham(\mathbb{A})$ such that $\left.
\psi\right|_{\mathbb{A}_1} = \phi$ and $$d_{C^0}(Id, \psi) \leq C  (d_{C^0}(Id, \phi))^{\frac{1}{2}}$$
for some constant $C>0$.

Moreover, if for some arc $I\subset S^1$ we have that $\phi = Id$ outside a quadrilateral $I\times [-1,1]$ and $\phi (I\times
[-1,1])\subset I\times [-2,2]$, then $\psi$ can be chosen to be the identity outside $I\times [-3,3]$.
}

\noindent
{\bf Proof:}  We equip $\mathbb{A} = S^1 \times [-3,3]$ with the area form $\omega = dx \wedge dy$, where $x$ is the coordinate on $S^1$
and $y$ is the coordinate on $[-3,3]$.  Suppose that $d_{C^0}(Id, \phi) \leq \epsilon$.  Let  $Diff_{0,c} (\mathbb{A}_2)$ denote the connected component of identity in the group of compactly supported diffeomorphisms of $\mathbb{A}_2$. By Lemma 5 from section 1.6.3 of \cite{EPP} there exists $f \in Diff_{0,c} (\mathbb{A}_2)$ such that $d_{C^0}(Id, f) \leq C \epsilon$, $f = \phi$ 
on a neighborhood of $\mathbb{A}_1$.  Denote $\Omega = f^* \omega$.  Following the strategy in \cite{EPP}
we will find a diffeomorphism $h \in Diff_{0,c} (\mathbb{A}_2)$ with the following properties:
\begin{itemize}

\item{} $\left. h\right|_{\mathbb{A}_1} = Id$,

\item{} $h^* \Omega = \omega$,

\item{} $d_{C^0}(Id, h) \leq C' \; \epsilon^{\frac{1}{2}}$.

\end{itemize}

Note that the only requirement that is different than those in \cite{EPP} is the third one.  
Given such an $h$ the argument in \cite{EPP} implies the existence of $\psi$ with the 
required properties.  We will now describe the changes that must be made to the argument in 
\cite{EPP} to obtain $h$ with the above properties.
 

\medskip
\noindent 
{\bf 1. Preparations for the construction of $h$:}  In this section we change
$r = \epsilon ^{\frac{1}{4}}$ to $r = \epsilon ^{\frac{1}{2}}$.  Note that the requirement that 
$r > 3 \epsilon$ is satisfied if $\epsilon$ is picked to be small enough.  
The rest of this section needs no changes.

\medskip
\noindent 
{\bf 2. Adjusting $\Omega$ on $\Gamma$:}  This section requires no changes.  Our choice of $r$ does 
not affect this part.  In this section the authors obtain a diffeomorphism $h_3$, which they later arrange to satisfy  
 \begin{equation} \label{Estimate for h_3}
 d_{C^0}(Id, h_3) \leq \epsilon.
 \end{equation}

\medskip
\noindent 
{\bf 3. Adjusting the areas of the squares:}  First note that in this section the authors use
the fact that $\frac{\epsilon}{r} \to 0$ as $\epsilon \to 0$.  This fact remains true for us 
as well, since $\frac{\epsilon}{r} = {\epsilon}^{\frac{1}{2}}$.  Note that our choice of $r$ changes 
equation (1.6) to $$| t_i|\leq C_1 \frac{\epsilon}{r} = C_1 \epsilon^{\frac{1}{2}}.$$
Next, the authors pick nonnegative functions $\bar{\rho}_{i}$ supported in the interior of the squares $K_{i}$ so that
$\int_{K_{i}}\bar{\rho}_{i}\omega=r^{2}$ and $||\bar{\rho}_{i}||_{C^{0}}\leq C_{2}\epsilon^{-1/2}$.  Note that, because $\int_{K_{i}} \omega=r^{2}$ one can easily pick the functions $\bar{\rho}_{i}$ as above such that they satisfy the better estimate 
$$||\bar{\rho}_{i}||_{C^{0}}\leq C_{2}.$$

Define a function $\varrho$ on $\mathbb{A}$ by $\varrho := 1+\sum_{i=1}^N t_i \bar{\rho}_i.$  Note that for an appropriate choice 
of a constant $C_3$ we have
 \begin{equation}\label{Estimate on varrho}
||\varrho - 1||_{C^{0}}\leq C_3 \epsilon^{\frac{1}{2}} 
\end{equation}
The rest of this section of the proof is unaffected by our changes.

\medskip
\noindent {\bf 4. Finishing the construction of $h_+$: Moser's argument:}  The authors apply Moser's argument and 
obtain a diffeomorphism $h_4$ whose $C^0$-distance from the identity is bounded by the diameter of the squares $K_i$, which
 have side length $r= \epsilon^{\frac{1}{2}}$, hence for an appropriate choice of a constant $C_4$ we have:
 \begin{equation} \label{Estimate for h_4}
   d_{C^0}(Id, h_4) \leq C_4 \epsilon^{\frac{1}{2}}. 
 \end{equation}
 
 Finally, the authors obtain another diffeomorphism $h_5$ by applying Moser's argument to the forms $\omega$ and $\varrho \omega$.  Part (iii) of Moser's trick, which we proved above, and estimate (\ref{Estimate on varrho}) imply that
  \begin{equation} \label{Estimate for h_5}
   d_{C^0}(Id, h_5) \leq C_5 \epsilon^{\frac{1}{2}}. 
 \end{equation}
 
 Then, as in \cite{EPP}, we set $h+ = h_3 h_4 h_5$.  Estimates (\ref{Estimate for h_3}), (\ref{Estimate for h_4}), and (\ref{Estimate for h_5}) imply that $d_{C^0}(Id, h_+) \leq C_6 \epsilon^{\frac{1}{2}},$ which is what we needed.
 
 \medskip
\noindent {\bf 5. Final observation:}  This section is unaffected by our changes.

This finishes the proof of the modified version of the extension lemma for annuli.

\section{{\bf Applications to the theory of Calabi quasimorphisms}} \label{Application to the theory of Calabi quasimorphisms}

One can associate to each open subset, $U$, of a symplectic manifold a subgroup of $\widetilde{Ham}(M)$, the universal cover of $Ham(M)$.  This subgroup is defined by:
  $$\widetilde{Ham}_U:= \{ \widetilde{\phi^t_H}: Supp(H) \subset U\}.$$
  Similarly, we define $Ham_U:= \{\phi^1_H: \widetilde{\phi^t_H} \in \widetilde{Ham}_U \}.$
    $\widetilde{Ham}_U$ admits a homomorphism, $\widetilde{Cal}_U : \widetilde{Ham}_U \rightarrow \mathbb{R}$, called the Calabi homomorphism \cite{C}, \cite{B} defined as follows:  
        $$\widetilde{Cal}_U(\widetilde{\phi^t_H}):= \int_{0}^{1} \int_{M} H(t, \cdot) \omega^n dt.$$
   If the symplectic form $\omega$ is exact on $U$ then the above formula gives a well defined homomorphism, $Cal_U: Ham_U \rightarrow \mathbb{R}$, which is also called the Calabi homomorphism.
        
   If $U \subset V$ are open sets then $\widetilde{Cal}_V = \widetilde{Cal}_U$ on $\widetilde{Ham}_U$, and if $\omega$ happens to be exact on $U$ and $V$ then $Cal_V = Cal_U$ on $Ham_U$.  One may wonder if it is possible to coherently glue these Calabi homomorphisms together to form a map on the entire symplectic manifold.  It is well known that $Ham(M, \omega)$ is simple, and $\widetilde{Ham}(M, \omega)$ is perfect, see \cite{B}, and hence these groups admit no nontrivial homomorphisms to the real line.  However, it was first shown by Entov and Polterovich in \cite{EP1} that, under certain restrictions on $QH^*(M)$, $\widetilde{Ham}(M, \omega)$ admits a homogeneous quasimorphism which, in a sense, extends the mentioned Calabi homomorphisms.  We will briefly review their work here, and present two applications of Theorem \ref{Lipschitz Estimate} to their theory.  The interested reader is referred to \cite{EP1, EP2, EP3, EP4,Mc, U2} for further information on this subject.  \\
   
   A quasimorphism on a group G is a map $\mu : G \rightarrow \mathbb{R}$ which is a homomorphism up to a bounded error, i.e., there exists a constant $C > 0$ such that for all $\phi, \psi \in G$  
               $$ | \mu(\phi \psi) - \mu(\phi) - \mu(\psi) | \leq C.$$
               
  We say $\mu$ is homogeneous if $\mu(\phi^m) = m \mu(\phi)$, for all $m \in \mathbb{Z}$.  
  
   Let $e$ denote an idempotent in the quantum cohomology ring of $M$, i.e., $e*e = e$.  Given a Hamiltonian path $\phi^t_H$, $0 \leq t \leq 1$, where $H$ is taken to be the unique normalized Hamiltonian generating $\phi^t_H$, we define $\rho_e(\phi^t_H) := \rho(H;e)$.  The homotopy invariance property of spectral invariants implies that $\rho_e: \widetilde{Ham}(M, \omega)\rightarrow \mathbb{R}$ is a well defined map.  If there exists a constant $R$ such that $\forall H \in C^\infty([0,1] \times M)$ 
   \begin{equation}\label{Spectral Boundedness}
     \rho(H;e) + \rho(\bar{H};e) \leq R 
   \end{equation}
  
   \noindent then the map $\rho_e$ defines a quasimorphism on $\widetilde{Ham}$, see \cite{U2}.  It has been shown that such an idempotent exists in the quantum cohomology ring of many symplectic manifolds, e.g., the identity element $1 \in QH^*(\mathbb{C}P^n)$, where $\mathbb{C}P^n$ is equipped with the Fubini-Study symplectic structure. However, $\rho_e$ is not homogeneous, so we homogenize it by defining 
   $\mu : \widetilde{Ham}(M, \omega) \rightarrow \mathbb{R}$ by the formula:
   \begin{equation} \label{Spectral Calabi Quasimorphism}
      \mu(\widetilde{\phi^t_H}) =  vol(M) \lim_{m \to \infty} \frac{\rho_e(\widetilde{\phi^t_H}^m)}{m}.
   \end{equation}
   
   If the idempotent $e$ satisfies Equation (\ref{Spectral Boundedness}), then $\mu$ is a homogeneous quasimorphism (see \cite{U2}) which satisfies the so called Calabi property:  if $U$ is a displaceable open set then $\mu|_{\widetilde{Ham}_U} = \widetilde{Cal}_U.$  We will refer to the quasimorphism $\mu$ obtained via Equation (\ref{Spectral Calabi Quasimorphism}) as the spectral Calabi quasimorphism. 
   
   \subsection{{\bf A triangle like inequality for Calabi quasimorphisms}}
   We will need the following lemma for our applications:
   
   \begin{lem}\label{Modified  Triangle Inequality}
     Let $\mu: \widetilde{Ham} \rightarrow \mathbb{R}$ denote the spectral Calabi quasimorphism obtained from $\rho_e$ via Equation (\ref{Spectral Calabi Quasimorphism}).  Suppose $\phi^t, \psi^t \in PHam$.  $\mu$ satisfies the following triangle like inequalities:
    \begin{enumerate}
    \item $\mu(\phi^t \psi^t) \leq \mu(\phi^t) + vol(M) \rho_e(\psi^t)$
    \item $\mu(\phi^t \psi^t) \leq vol(M) \rho_e(\phi^t) + \mu(\psi^t)$.
    \end{enumerate}
  \end{lem}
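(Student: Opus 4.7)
The plan is to combine the triangle inequality for spectral invariants with the symplectic invariance of $\rho_e$ in order to bound $\rho_e((\phi^t\psi^t)^m)$, and then pass to the limit in the definition of $\mu$. The starting point is the elementary group identity
$$(\phi^t\psi^t)^m \;=\; (\phi^t)^m \cdot \bigl((\phi^t)^{-(m-1)}\psi^t(\phi^t)^{m-1}\bigr)\bigl((\phi^t)^{-(m-2)}\psi^t(\phi^t)^{m-2}\bigr)\cdots \bigl((\phi^t)^{-1}\psi^t\phi^t\bigr)\cdot \psi^t,$$
which holds in $PHam$ (and hence in $\widetilde{Ham}$) and is verified by an easy induction on $m$.

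Next I would apply the triangle inequality of Proposition \ref{Properties of Spectral Invariants}(4) iteratively to this factorization; since $e\ast e = e$, each application costs no defect term and we obtain
$$\rho_e\bigl((\phi^t\psi^t)^m\bigr) \;\leq\; \rho_e\bigl((\phi^t)^m\bigr) + \sum_{k=0}^{m-1}\rho_e\bigl((\phi^t)^{-k}\psi^t(\phi^t)^k\bigr).$$
I would then argue that each summand equals $\rho_e(\psi^t)$. Indeed, writing $\psi^t = \phi^t_G$ for its normalized generator, conjugation by $(\phi^t)^k$ in $PHam$ produces a Hamiltonian path generated by the pullback Hamiltonian $G(t,\eta(x))$ with $\eta = (\phi^1_H)^k \in Ham(M)$; since $\eta$ is in the identity component, it acts trivially on cohomology and therefore $\eta^*e = e$, so Proposition \ref{Properties of Spectral Invariants}(3) gives $\rho_e((\phi^t)^{-k}\psi^t(\phi^t)^k) = \rho_e(\psi^t)$. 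Thus
$$\rho_e\bigl((\phi^t\psi^t)^m\bigr) \;\leq\; \rho_e\bigl((\phi^t)^m\bigr) + m\,\rho_e(\psi^t).$$
Dividing by $m$, letting $m\to\infty$, and using the definition (\ref{Spectral Calabi Quasimorphism}) of $\mu$ yields inequality (1).

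For inequality (2) I would invoke the standard fact that any homogeneous quasimorphism on a group is conjugation invariant: since $(\phi^t)^{-1}(\phi^t\psi^t)\phi^t = \psi^t\phi^t$, we get $\mu(\phi^t\psi^t) = \mu(\psi^t\phi^t)$, and applying inequality (1) with the roles of $\phi^t$ and $\psi^t$ swapped bounds the right hand side by $\mu(\psi^t) + vol(M)\,\rho_e(\phi^t)$. I do not anticipate a significant obstacle; the only point requiring care is the identification of the conjugated path with the flow of the pullback Hamiltonian, which is a routine computation, and checking that $\mu$ is conjugation invariant, which follows from homogeneity and the quasimorphism defect inequality.
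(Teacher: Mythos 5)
Your overall strategy is the same as the paper's (the product trick, the triangle inequality with $e\ast e=e$, the claim that each conjugated factor has the same $\rho_e$, then homogenize), and your derivation of inequality (2) from (1) via conjugation invariance of the homogeneous quasimorphism $\mu$ is a perfectly acceptable shortcut (the paper simply leaves (2) to the reader). However, there is a genuine gap in your justification of the key claim. In the factorization, each factor is the path $t \mapsto \bigl((\phi^t)^{k}\bigr)^{-1}\psi^t(\phi^t)^{k}$, i.e.\ conjugation at each time $t$ by the \emph{time-dependent} family $(\phi^t)^k$, not by the fixed diffeomorphism $\eta=(\phi^1)^k$. Such a path is \emph{not} generated by the pullback Hamiltonian $G(t,\eta(x))$: the flow of $G(t,\eta(x))$ is the path $t\mapsto \eta^{-1}\psi^t\eta$, which agrees with the pointwise-conjugated path only at the endpoints $t=0,1$ (its generator is instead of the form $\bar{\Theta}\#G\#\Theta$, where $\Theta$ generates $(\phi^t)^k$). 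So the step you call a ``routine computation'' is false as stated, and symplectic invariance alone does not apply to the factors appearing in the product identity.

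The missing ingredient is the homotopy invariance property, part (\ref{Homotopy Invariance}) of Proposition \ref{Properties of Spectral Invariants}. One shows that for any $\theta^t\in PHam$ the path $(\theta^t)^{-1}\psi^t\theta^t$ is homotopic rel endpoints to $\theta^{-1}\psi^t\theta$ with $\theta=\theta^1$ fixed; an explicit homotopy is $\Lambda(s,t)=(\theta^{(1-t)s+t})^{-1}\psi^t\,\theta^{(1-t)s+t}$. Homotopy invariance (both generators being normalized, since pullback by a volume-preserving $\eta$ preserves normalization) then gives $\rho_e\bigl((\theta^t)^{-1}\psi^t\theta^t\bigr)=\rho_e(\theta^{-1}\psi^t\theta)$, and only at this point does your symplectic-invariance argument, together with $\eta^*e=e$ for $\eta\in Ham(M)$, yield $\rho_e$ of each factor equal to $\rho_e(\psi^t)$. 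With this repair the rest of your argument, including the passage to the limit and the deduction of (2), goes through.
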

  \begin{rem} \label{Remark to Modified Triangle Inequality}
  
   Note that the above lemma implies that $\mu(\phi^t) - \mu(\psi^t) \leq vol(M) \rho_e(\psi^{-t} \phi^t )$.
   One interesting consequence of this inequality is the fact that the spectral Calabi quasimorphism is Lipschitz continuous with respect to the Hofer metric.  This important fact was established in \cite{EP1} by somewhat different methods.
   \end{rem}
   
   \begin{proof} (Lemma \ref{Modified Triangle Inequality})
      We provide a proof for the first of the two inequalities and leave the second to the reader.
      Note that for any integer $m$ we have 
      $$(\phi^t \psi^t)^m = (\phi^t)^m \prod_{i=1}^{m}{(\phi^t)^{i-m} \psi^t (\phi^t)^{m-i} } .$$
      The triangle inequality of spectral invariants implies that:
      \begin{equation}\label{Consequence of the product trick}
      \rho_e((\phi^t \psi^t)^m )\leq \rho_e((\phi^t)^m) + \sum_{i=1}^{m}{\rho_e((\phi^t)^{i-m} \psi^t (\phi^t)^{m-i} )}.
      \end{equation}
      We claim that $\rho_e((\phi^t)^{i-m} \psi^t (\phi^t)^{m-i} ) = \rho_e({\psi^t})$.  To see this, first observe that for any $\theta^t \in PHam$ the path $(\theta^t)^{-1} \psi^t \theta^t$ is homotopic with fixed end points to the path $\theta^{-1} \psi^t \theta$.  Here is a homotopy from one path to the other:

$$\Lambda(s,t) = (\theta^{(1-t)s \,+\, t})^{-1} \psi^t \theta^{(1-t)s\, + \,t}.$$

  The homotopy invariance property of spectral invariants implies that $\rho_e((\theta^t)^{-1} \psi^t \theta^t) = \rho_e(\theta^{-1} \psi^t \theta)$, and the latter equals $\rho_e(\psi^t)$ by the symplectic invariance property.  This proves the claim.
  
      It follows from inequality (\ref{Consequence of the product trick}) and the above claim that 
      $$\rho_e((\phi^t \psi^t)^m )\leq \rho_e((\phi^t)^m) + m \rho_e(\psi^t).$$
      Multiplying both sides of the above inequality by vol(M), dividing by $m$ and taking the limit as $m  \to \infty$ yields the result.  
   
   \end{proof}
 
   \subsection{First Application}
   We are now ready for the first application of Theorem \ref{Lipschitz Estimate}.  In the following theorem, we assume $e\in QH^*(M, \Lambda)$ satisfies Equation (\ref{Spectral Boundedness}). Let $\mu$ denote the spectral Calabi quasimorphism obtained from Equation (\ref{Spectral Calabi Quasimorphism}).  Let $U$ denote a proper open subset of $M$ and define $\eta : \widetilde{Ham}_U \rightarrow \mathbb{R}$ by
   $$\eta = \mu - \widetilde{Cal}_U.$$
   
   \begin{thm} \label{Application to Calabi Quasimorphisms}
  Suppose that $\phi^t, \psi^t \in Ham_{U}$ for all $t\in[0,1]$.  There exist constants $C, \delta > 0 $ depending on $U$, such that if $d_{C^0}^{path}(\phi^t, \psi^t) \leq \delta$ then:
   
       $$|\eta(\phi^t) - \eta(\psi^t)| \leq C \, d_{C^0}^{path}(\phi^t, \psi^t).$$
   \end{thm}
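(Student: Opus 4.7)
The plan is to combine Lemma~\ref{Modified Triangle Inequality} with the $C^0$-spectral estimate (Theorem~\ref{Lipschitz Estimate}). Although $\rho_e$ is not $C^0$-continuous (Example~\ref{Discontinuity Example}), the crucial observation is that for a path $\theta^t \in PHam_U$ the combination $vol(M)\,\rho_e(\theta) - \widetilde{Cal}_U(\theta)$ can be rewritten as $vol(M)\,\rho(F_\theta; e)$ for a Hamiltonian $F_\theta$ vanishing on a fixed ball outside $U$, and Theorem~\ref{Lipschitz Estimate} applies directly to such Hamiltonians.

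Since $U$ is proper, I would first pick an open ball $B \subset M \setminus U$. Each $\theta^t \in PHam_U$ admits a unique generating Hamiltonian $F_\theta \in C^\infty_c([0,1] \times (M \setminus B))$ vanishing on $B$, with normalization $\tilde F_\theta = F_\theta - c_\theta(t)$, where $c_\theta(t) = vol(M)^{-1}\int_M F_\theta(t,\cdot)\,\omega^n$. Since $\rho_e(\theta) = \rho(\tilde F_\theta; e)$, part~(1) of Proposition~\ref{Properties of Spectral Invariants} together with $\int_0^1 c_\theta(t)\,dt = \widetilde{Cal}_U(\theta)/vol(M)$ yields the key identity
\[
vol(M)\,\rho_e(\theta) - \widetilde{Cal}_U(\theta) \;=\; vol(M)\,\rho(F_\theta; e).
\]

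Next, set $\alpha^t := \psi^{-t}\phi^t \in PHam_U$, which satisfies $d_{C^0}^{path}(Id, \alpha^t) = d_{C^0}^{path}(\phi^t, \psi^t)$. Lemma~\ref{Modified Triangle Inequality} gives $\mu(\phi^t) - \mu(\psi^t) \leq vol(M)\,\rho_e(\alpha^t)$, and the homomorphism property of $\widetilde{Cal}_U$ on $\widetilde{Ham}_U$ gives $\widetilde{Cal}_U(\phi^t) - \widetilde{Cal}_U(\psi^t) = \widetilde{Cal}_U(\alpha^t)$. Subtracting and applying the identity above,
\[
\eta(\phi^t) - \eta(\psi^t) \;\leq\; vol(M)\,\rho(F_\alpha; e).
\]
Since both $F_\alpha$ and $0$ lie in $C^\infty_c([0,1] \times (M \setminus B))$, Theorem~\ref{Lipschitz Estimate} then bounds $\rho(F_\alpha; e)$ by a constant multiple of $d_{C^0}^{path}(\phi^t, \psi^t)$. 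The symmetric argument swapping $\phi^t$ and $\psi^t$ (equivalently, replacing $\alpha^t$ by $\alpha^{-t}$, which has the same $C^0$-distance to the identity path) produces the matching lower bound for $\eta(\phi^t) - \eta(\psi^t)$, completing the estimate.

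The main work is the reformulation via the identity above, which replaces the non-$C^0$-continuous $\rho_e$ with a spectral invariant to which Theorem~\ref{Lipschitz Estimate} applies; the remaining ingredients are Lemma~\ref{Modified Triangle Inequality} and the homomorphism property of $\widetilde{Cal}_U$. One small bookkeeping point will be that Theorem~\ref{Lipschitz Estimate} applied to $F_\alpha$ and $G = 0$ gives $|\rho(F_\alpha; e) - \rho(0; e)| \leq C\,d_{C^0}^{path}(\phi^t, \psi^t)$, so one must confirm that $\rho(0;e) = 0$ in order to turn this difference bound into an absolute bound on $\rho(F_\alpha; e)$; this is immediate from the normalization axiom when $e = 1$, and holds for the idempotents entering the paper's applications.
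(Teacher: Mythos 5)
Your proposal is correct and takes essentially the same route as the paper: the inequality of Remark \ref{Remark to Modified Triangle Inequality} together with the shift property (part \ref{Shift} of Proposition \ref{Properties of Spectral Invariants}) converts $\eta(\phi^t)-\eta(\psi^t)$ into $vol(M)\,\rho(\,\cdot\,;e)$ of a Hamiltonian generating $\psi^{-t}\phi^t$ and vanishing outside $U$, to which Theorem \ref{Lipschitz Estimate} (with $G=0$) is applied, followed by the symmetric estimate. Your explicit ``key identity'' and the remark about needing $\rho(0;e)=0$ merely make explicit what the paper's proof does inline and silently (it applies Theorem \ref{Lipschitz Estimate} with the class $e$ and drops the $\rho(0;e)$ term without comment), so there is no substantive difference in approach.
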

   \begin{proof}
   We pick $\delta$ to be the same constant from Theorem \ref{Lipschitz Estimate}.  Let $F, G: [0,1] \times M \rightarrow \mathbb{R}$ denote the unique normalized Hamiltonians which generate the flows $\phi^t \text{, } \psi^t$, i.e., $\phi^t= \phi^t_F$ and $\psi^t = \phi^t_G$.  Note that $\eta(\phi^t) - \eta(\psi^t) = \eta(\phi^t_F) - \eta(\phi^t_G) = \mu(\phi^t_F) - \mu(\phi^t_G) - \widetilde{Cal}_U(\phi^{-t}_G \phi^t_F).$  From Lemma \ref{Modified Triangle Inequality} and Remark \ref{Remark to Modified Triangle Inequality} we get $$\mu(\phi^t_F) - \mu(\phi^t_G) \leq vol(M) \rho(\bar{G} \# F;e),$$
   \noindent which combined with the previous line gives us
    $$\eta(\phi^t) - \eta(\psi^t) \leq vol(M) \rho(\bar{G} \# F;e) - \widetilde{Cal}_U(\phi^{-t}_G \phi^t_F)$$
    $$= vol(M) \rho(\bar{G} \# F + \frac{ \widetilde{Cal}_U(\phi^{-t}_G \phi^t_F)}{vol(M)} ;e),$$
 where the last equality follows from Property (\ref{Shift}) of spectral invariants.  Now, observe that the Hamiltonian $\bar{G} \# F + \frac{ \widetilde{Cal}_U(\phi^{-t}_G \phi^t_F)}{vol(M)}$  is supported in $U$.  Hence, Theorem \ref{Lipschitz Estimate} and the above inequality imply that
            $$\eta(\phi^t) - \eta(\psi^t) \leq C \, d_{C^0}^{path}(\phi^t, \psi^t),$$
\noindent for an appropriately chosen constant $C$.  Similarly, we get an estimate for $\eta(\psi^t) - \eta(\phi^t)$ from which the result follows.
   \end{proof}

\subsection{Second Application}   
   Let $B^{2n}_r$ denote the open ball of radius $r$ in $\mathbb{R}^{2n}$, equipped with the standard symplectic form $\omega_{st}$.  Let $\mathcal{H}(B^{2n}_{r})$ denote the $C^0$-closure of $Ham(B^{2n}_r)$ inside compactly supported homeomorphisms of $B^{2n}_r$.  In \cite{EPP}, Entov, Polterovich, and Py construct an infinite dimensional family of homogeneous quasimorphisms on $\mathcal{H}(B^{2n})$.  We will now present a brief summary of their work.
   
   Consider $\mathbb{C}P^n$ equipped with the Fubini-Study symplectic form, $\omega_{FS}$, normalized so that the integral of this form over the projective line is $1$.  In \cite{BEP}, the authors construct embeddings, $\theta_{\delta}:B^{2n}_{r_0} \rightarrow \mathbb{C}P^n$, where $r_0 = \frac{1}{\sqrt{\pi}}$ and $\delta$ is a parameter ranging over $(0,1]$.  These embeddings are conformally symplectic: $\theta_{\delta}^* \omega_{FS} = \delta \omega_{st}$.  The embeddings $\theta_{\delta}$ induce monomorphisms
$\theta_{\delta,*} : Ham(B^{2n}_{r_0}, \omega_{st}) \rightarrow Ham(\mathbb{C}P^n, \omega_{FS})$.  The Hamiltonian diffeomorphisms that are in the image of $\theta_{\delta,*}$ are supported in the interior of the image of $\theta_{\delta}$ and are given by the formula:
    $$\phi \mapsto \theta_{\delta}\phi \theta^{-1}_{\delta}.$$
    
    Let $\mu: Ham(\mathbb{C}P^n, \omega_{FS}) \rightarrow \mathbb{R}$ denote the spectral Calabi quasimorphism obtained from homogenization of $\rho(\cdot;1)$ \cite{EP1}.  It is shown in \cite{BEP} that $\mu_{\delta} := \delta^{-n-1} \mu \circ \theta_{\delta,*}$ is a Calabi quasimorphism on $Ham(B^{2n}_{r_0}, \omega_{st})$.  
    
    In \cite{EPP} the authors consider homogeneous quasimorphisms $$\eta_{\delta} := \mu_{\delta} - Cal_{B^{2n}_{r_0}}.$$  They show that each $\eta_{\delta}$ is bounded in a $C^0$-neighborhood of the identity in $Ham(B^{2n}_{r_0})$.  Employing general properties of homogeneous quasimorphisms one can show that this boundedness implies that $\eta_{\delta}$ is continuous with respect to the $C^0$-topology (see \cite{Sh}), and it extends continuously to $\mathcal{H}(B^{2n}_{r_0})$.  Using the fact that $B^{2n}$ is conformally symplectomorphic to $B^{2n}_{r_0}$, one can easily transfer all of these construction to the ball of radius $1$.
    
    Below, we will improve the results in \cite{EPP} by obtaining estimates which establish firstly, local Lipschitz continuity of $\eta_{\delta}$ with respect to $d_{C^0}$ on $Ham(B^{2n}_{r_0})$, and secondly, extension of $\eta_{\delta}$ to a (locally Lipschitz) homogeneous quasimorphism on $\mathcal{H}(B^{2n}_{r_0})$.  Our proof is a direct corollary of Theorem \ref{Application to Calabi Quasimorphisms} and it does not appeal to the general properties of homogeneous quasimorphisms used in \cite{EPP}.
 
 \begin{thm} \label{Local Lipschitz Continuity of EPP's Quasimorphisms}
    There exist constants $C\text{, }\epsilon > 0$, depending on $\eta_{\delta}$, such that if  $d_{C^0}(\phi, \psi) \leq \epsilon$ then 
    
    $$|\eta_{\delta} (\phi) -  \eta_{\delta} (\psi) | \leq C \, d_{C^0}(\phi, \psi).$$
    
    Here $d_{C^0}$ is the distance induced by the standard metric on $\mathbb{R}^{2n}$, and $C$ is some constant depending on $\eta_{\delta}$.  
    Furthermore, $\eta_{\delta}$ extends to $\mathcal{H}(B^{2n}_{r_0})$, and the extension satisfies the same estimate as above.  
    \end{thm}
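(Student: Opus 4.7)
The plan is to reduce Theorem \ref{Local Lipschitz Continuity of EPP's Quasimorphisms} to Theorem \ref{Application to Calabi Quasimorphisms} by transporting the problem from $Ham(B^{2n}_{r_0})$ to $\widetilde{Ham}_{U_\delta} \subset \widetilde{Ham}(\mathbb{C}P^n)$ via the conformally symplectic embedding $\theta_\delta$, where $U_\delta := \theta_\delta(B^{2n}_{r_0})$. A direct change-of-variable computation using $\theta_\delta^*\omega_{FS} = \delta\,\omega_{st}$ gives
$$\widetilde{Cal}_{U_\delta}(\theta_{\delta,*}\phi) = \delta^{n+1}\, Cal_{B^{2n}_{r_0}}(\phi),$$
and combined with the definition $\mu_\delta = \delta^{-n-1}\mu\circ\theta_{\delta,*}$ this shows that
$$\eta_\delta(\phi) = \delta^{-n-1}\,\tilde{\eta}(\theta_{\delta,*}\phi), \quad \text{where } \tilde{\eta} := \mu - \widetilde{Cal}_{U_\delta},$$
so $\tilde{\eta}$ is precisely the quasimorphism on $\widetilde{Ham}_{U_\delta}$ to which Theorem \ref{Application to Calabi Quasimorphisms} applies, with $M=\mathbb{C}P^n$ and $U = U_\delta$ a proper open subset.

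Given $\phi,\psi \in Ham(B^{2n}_{r_0})$ with $d_{C^0}(\phi,\psi)$ small, I would first apply Lemma \ref{Local Path Connectedness} to $\phi^{-1}\psi$ to obtain a Hamiltonian $K$ compactly supported in $B^{2n}_{r_0}$ with $\phi^1_K = \phi^{-1}\psi$ and $d_{C^0}^{path}(Id, \phi^t_K) \leq d_{C^0}(\phi,\psi)$. Choosing any compactly supported Hamiltonian $F$ generating $\phi$, the paths $\phi^t_F$ and $\phi^t_{F\#K}$ end at $\phi$ and $\psi$ respectively, and by the left-invariance of the $C^0$-path distance,
$$d_{C^0}^{path}(\phi^t_F,\phi^t_{F\#K}) = d_{C^0}^{path}(Id,\phi^t_K) \leq d_{C^0}(\phi,\psi).$$
Pushing these paths forward by $\theta_{\delta,*}$ gives Hamiltonian paths in $\widetilde{Ham}_{U_\delta}$ whose $d_{C^0}^{path}$-distance is controlled by $L_\delta\cdot d_{C^0}(\phi,\psi)$, where $L_\delta$ is a Lipschitz constant for $\theta_\delta$. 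Provided $d_{C^0}(\phi,\psi)$ is small enough, Theorem \ref{Application to Calabi Quasimorphisms} yields a constant $C_1$ with
$$|\tilde{\eta}(\theta_{\delta,*}\phi^t_F) - \tilde{\eta}(\theta_{\delta,*}\phi^t_{F\#K})| \leq C_1\, L_\delta\, d_{C^0}(\phi,\psi),$$
and dividing by $\delta^{n+1}$ delivers the desired locally Lipschitz estimate for $\eta_\delta$ on $Ham(B^{2n}_{r_0})$.

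The extension to $\mathcal{H}(B^{2n}_{r_0})$ then follows from the fact that the Lipschitz constant obtained above depends only on $\delta$ (hence only on $\eta_\delta$) and not on $\phi,\psi$: for any $C^0$-Cauchy sequence $\phi_k \in Ham(B^{2n}_{r_0})$ converging to some $\phi \in \mathcal{H}(B^{2n}_{r_0})$, the real sequence $\eta_\delta(\phi_k)$ is Cauchy, and one defines $\eta_\delta(\phi)$ as its limit; independence of the approximating sequence and the Lipschitz bound for the extension both follow by passing to the limit. The main technical point I expect to require care is ensuring the uniformity of $L_\delta$ and of the constant in Theorem \ref{Application to Calabi Quasimorphisms} as $\phi,\psi$ range over all of $Ham(B^{2n}_{r_0})$; this rests on $\theta_\delta$ extending smoothly to a neighborhood of $\overline{B^{2n}_{r_0}}$ in the construction of \cite{BEP}, so that the supports of the auxiliary Hamiltonians $F$ and $F\#K$ may lie anywhere in $B^{2n}_{r_0}$ without causing the constants to blow up.
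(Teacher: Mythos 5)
Your proposal is correct and follows essentially the same route as the paper: the identity $\eta_\delta = \delta^{-n-1}(\mu - \widetilde{Cal}_{U})\circ\theta_{\delta,*}$ obtained from the conformal rescaling, the use of Lemma \ref{Local Path Connectedness} to produce paths $\phi^t_F$ and $\phi^t_{F\#K}$ with controlled $d_{C^0}^{path}$-distance, the push-forward comparison of $C^0$-distances (your Lipschitz constant $L_\delta$ plays the role of the paper's constant $A_2$ in (\ref{Equivalence of Distances})), and the application of Theorem \ref{Application to Calabi Quasimorphisms}, with the extension to $\mathcal{H}(B^{2n}_{r_0})$ deduced from the uniformity of $\epsilon$ exactly as in the paper.
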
 
   \begin{proof} (Theorem \ref{Local Lipschitz Continuity of EPP's Quasimorphisms})
     Note that because $\epsilon$ does not depend on $\phi$ or $\psi$ the estimate in the theorem proves more than local Lipschitz continuity of $\eta_{\delta}$.  In fact, the second assertion of the theorem about $\eta_{\delta}$ extending to $\mathcal{H}(B^{2n}_{r_0})$ follows easily from this estimate.  Hence, we will only provide a proof for the first assertion in the theorem.
    
    Let $U$ denote the image of $B^{2n}_{r_0}$ under the embedding $\theta_{\delta}$.  To avoid confusing the $C^0$-distance on $\mathbb{C}P^n$ and the one on  $B^{2n}_{r_0}$ we will use the notation $d_{C^0,\mathbb{C}P^n}$ to denote the distance associated to $\mathbb{C}P^n$, and use $d_{C^0}$ for $B^{2n}_{r_0}$.  We drop all tildes from our notation, because in this case both $\mu$ and $Cal_U$ descend from $\widetilde{Ham}_U$ to $Ham_U$.
    
    It is easy to show that the ratio of any two Riemannian metrics on a compact manifold is always bounded, below and above.  This fact implies that there exist constants $A_1$, $A_2$ such that
    \begin{equation} \label{Equivalence of Distances}
    A_1 \leq \frac{d_{C^0,\mathbb{C}P^n}(\theta_{\delta,*}(\phi), \theta_{\delta,*}(\psi))}{d_{C^0}(\phi, \psi)} \leq A_2,
    \end{equation}
    for any homeomorphisms $\phi$, $\psi$.
    
    Suppose that $H: [0,1] \times B^{2n}_{r_0} \rightarrow \mathbb{R}$ is a normalized Hamiltonian.  It can easily be checked that $\theta_{\delta,*}(\phi^t_H)$ is generated by the Hamiltonian $\delta H(t,\theta_{\delta}^{-1}(x))$.  For simplicity of notation we will let $H^* = \delta H(t,\theta_{\delta}^{-1}(x))$, for any Hamiltonian $H: [0,1] \times B^{2n}_{r_0} \rightarrow \mathbb{R}.$  A simple computation, which will be carried out at the end of this proof, yields the following formula for $\eta_{\delta}$:
    
    \begin{equation}  \label{Simplified Formula}       
    \eta_{\delta} (\phi^1_H) = \delta^{-n-1} ( \, \mu(\phi^1_{H^*})  - Cal_U( \phi^1_{H^*}) \,).
    \end{equation}
    
    We will now prove our Theorem using the above formula.  Pick a Hamiltonian $F: [0,1] \times B^{2n}_{r_0} \rightarrow \mathbb{R} $ such that $\phi = \phi^1_F$.   By Lemma \ref{Local Path Connectedness} there exists $K: [0,1] \times B^{2n}_{r_0} \rightarrow \mathbb{R}$ such that $\phi^{-1} \psi = \phi^1_K$ and $$d_{C^0}^{path}(Id, \phi^t_K) \leq d_{C^0}(Id, \phi^{-1} \psi) =   d_{C^0}(\phi, \psi).$$  Let $G = F \# K$, so that $\phi^1_G = \psi$.  Note that we have:
    $$ d_{C^0}^{path}(\phi^t_F, \phi^t_G)   \leq d_{C^0}(\phi, \psi),$$
    which, by (\ref{Equivalence of Distances}), gives us the following estimate:
    \begin{equation} \label{Small Path}
    d_{C^0,\mathbb{C}P^n}^{path}(\phi^t_{F^*}, \phi^t_{G^*})  \leq A_2 d_{C^0}(\phi, \psi). 
    \end{equation}
    
    Formula (\ref{Simplified Formula}) tells us that $\eta_{\delta}$ is nothing but $\delta^{-n-1}$ times the pull back to $B^{2n}_{r_0}$ of the quasimorphism $\eta$ considered in Theorem \ref{Application to Calabi Quasimorphisms}.  The result follows immediately from (\ref{Small Path}), and Theorem \ref{Application to Calabi Quasimorphisms}.  Note that we must pick $\epsilon$ to be small enough to make the application of Theorem \ref{Application to Calabi Quasimorphisms} possible.
    
     We will now give a proof of formula (\ref{Simplified Formula}).  Because $\theta_{\delta}^* \omega_{FS} = \delta \omega_{st}$ we have 
     $$Cal_{B^{2n}_{r_0}}(\phi^1_H) = \int_{0}^{1} \int_{B^{2n}_{r_0}} H(t,x) \,\omega_{st}^n \, dt $$
            $$= \int_{0}^{1} \int_{\mathbb{C}P^n} H(t,\theta_{\delta}^{-1}(x)) \,(\theta_{\delta}^{-1})^*\omega_{st}^n \, dt$$
            $$= \delta^{-n} \int_{0}^{1}  \int_{\mathbb{C}P^n} H(t,\theta_{\delta}^{-1}(x)) \, \omega_{FS}^n \, dt$$
            $$= \delta^{-n-1} \int_{0}^{1}  \int_{\mathbb{C}P^n} H^*(t,x) \, \omega_{FS}^n \, dt$$
            $$= \delta^{-n-1} Cal_U(\phi^1_{H^*}).$$
               
      Also, by definition of $\mu_{\delta}$ we have
     $$\mu_{\delta}(\phi^1_H) = \delta^{-n-1} \mu( \theta_{\delta,*}(\phi^1_H) ) = \delta^{-n-1} \mu(\phi^1_{H^*}).$$
           
     Combine the above two computations to get (\ref{Simplified Formula}).
    
    \end{proof}

\section{{\bf $C^0$ Symplectic Topology and Spectral Hamiltonian Paths}} \label{Application to Spectral Hamiltonian Homeomorphisms}

 Suppose $H \in C^{\infty}([0,1] \times M)$ with the associated flow $\phi^t_H \in PHam(M)$.  Recall that for each $s \in [0,1]$ the Hamiltonian diffeomorphism $\phi^s_H$ is the time-1 map of the flow of the Hamiltonian $$H^s(t,x) = sH(st,x).$$  
    We define $\rho_H : [0,1] \to \mathbb{R}$, the spectral wave front function of $H$, by
    $$\rho_H(s) = \rho(H^s;1).$$
    
    This definition first appeared in an unpublished manuscript of Y.-G. Oh.  Oh used the above notion to define a $C^0$ generalization of smooth Hamiltonian paths.  We will now recall Oh's construction of spectral Hamiltonian paths and answer a question raised by him on this subject.
    
     By an isotopy of $M$ we mean a path in the group of homeomorphisms of $M$.  We assume that all Hamiltonians are normalized in the sense that $\int_{M} H(t, \cdot) \omega^n  = 0$ for each $t \in [0,1]$.
    
    \begin{defn}(Oh)  \label{Spectral Hamiltonian Paths}
    Suppose that $\phi^t : M \rightarrow M \text{ } (0\leq t \leq 1)$ is an isotopy of $M$ such that there exist a sequence 
    $\phi^t_{H_i}$ in $PHam(M)$ and two continuous functions $\rho, \bar{\rho}:[0,1] \rightarrow \mathbb{R}$ with the following properties:
    $$ \lim_{i\to \infty} d_{C^0}^{path}(\phi^t, \phi^t_{H_i}) = 0 \text{, } (C^0)\lim_{i \to \infty} \rho_{H_i} = \rho \text{, and   } (C^0)\lim_{i \to \infty} \rho_{\bar{H_i}} = \bar{\rho}.$$
    
    We call such an isotopy $\phi^t$ a spectral Hamiltonian path with the spectral wavefront function $\rho$.  By $PHameo_{sp}(M, \omega)$  we denote the set of all spectral Hamiltonian paths.  We define the set of spectral Hamiltonian homeomorphisms of $M$ by
$$Hameo_{sp}(M, \omega) := \{ \phi^1 : \phi^t \in PHameo_{sp}(M, \omega) \}.$$
   
    \end{defn}
    We will eliminate the symplectic form $\omega$ from the notation, unless there is a possibility of confusion.
    Recall that if $\lim_{i\to \infty} d_{C^0}^{path}(\phi^t, \phi^t_{H_i}) =0$ then we also have $\lim_{i\to \infty} d_{C^0}^{path}((\phi^t)^{-1}, \phi^t_{\bar{H_i}}) =0$.  Thus, the above definition implies that $\bar{\rho}$ is the spectral wavefront function of $(\phi^t)^{-1}$.  We should point out that it is not known if $PHameo_{sp}$ and $Hameo_{sp}$ are groups.  The difficulty here lies in showing that these sets are closed under composition.
    
    In the above definition, it is assumed that $M$ is closed.  However, we will need the above notions in the case of one non-closed manifold: the two dimensional disk $D^2$.  We embed $D^2$ into the two sphere as the southern hemisphere and we assume that all diffeomorphisms, homeomorphisms, and isotopies considered have supports contained in the interior of the southern hemisphere of $S^2$.  $PHameo_{sp}(D^2)$ and $Hameo_{sp}(D^2)$ are then defined as in Definition \ref{Spectral Hamiltonian Paths}.  Note that, even though we require that all diffeomorphisms and Hamiltonian paths be supported in the interior of $D^2$, we continue to assume that all Hamiltonians are normalized as Hamiltonians on $S^2$ and hence they may be non-zero functions of time in the northern hemisphere.  
    
    The following result answers Question \ref{Oh's Question} in the case where $M = D^2$.  
   \begin{thm}\label{Answer to Oh's Question}
    $Hameo_{sp}(D^2) = Sympeo_0(D^2)$.
   \end{thm}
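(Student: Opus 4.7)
The plan is to prove the two inclusions separately. The inclusion $Hameo_{sp}(D^2) \subseteq Sympeo_0(D^2)$ is immediate from the definitions: if $\phi = \phi^1$ with $\phi^t \in PHameo_{sp}(D^2)$ obtained as the uniform limit of Hamiltonian paths $\phi^t_{H_i}$, then each $\phi^t_{H_i}$ is a continuous path in $Symp(D^2) \subset Sympeo(D^2)$ starting at $Id$, and its uniform limit $\phi^t$ is a continuous path in $Sympeo(D^2) = Sympeo_0(D^2)$ from $Id$ to $\phi$, exhibiting $\phi$ as an element of $Sympeo_0(D^2)$.

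For the reverse inclusion, fix $\phi \in Sympeo_0(D^2)$. I would first use the standard identification $Sympeo_0(D^2) = \overline{Ham_c(D^2)}^{C^0}$ to pick $\phi_i \in Ham_c(D^2)$ with $d_{C^0}(\phi_i, \phi) < 4^{-i}$; setting $\phi_0 := Id$ and $\psi_i := \phi_i \phi_{i-1}^{-1}$ gives $\psi_i \in Ham_c(D^2)$ with $d_{C^0}(Id, \psi_i) = O(4^{-i})$. Since $D^2$ is a symplectic $2$-ball, Lemma \ref{Local Path Connectedness} provides smooth Hamiltonian paths $\alpha_i^t$ from $Id$ to $\psi_i$ satisfying $d_{C^0}^{path}(Id, \alpha_i^t) \leq d_{C^0}(Id, \psi_i)$. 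I would then assemble the target isotopy $\phi^t$ by concatenation: on the subinterval $[1-2^{-(i-1)}, 1-2^{-i}]$ traverse the reparametrized path $s \mapsto \alpha_i^s \circ \phi_{i-1}$, and declare $\phi^1 := \phi$. The telescoping bound $d_{C^0}(\phi^t, \phi_{i-1}) \leq d_{C^0}(Id, \alpha_i^{s}) = O(4^{-i})$ on the $i$-th subinterval guarantees continuity at $t=1$.

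For the approximating sequence I would take $H_n$ to be a smooth normalized Hamiltonian on $S^2$, supported in the interior of the southern hemisphere, which generates a smooth concatenation of the $\alpha_i^t$ $(i \leq n)$ on $[0, 1-2^{-n}]$ and vanishes identically on $[1-2^{-n}, 1]$; then $\phi^1_{H_n} = \phi_n$ and $d_{C^0}^{path}(\phi^t, \phi^t_{H_n}) = O(4^{-n})$. The analytic input is Theorem \ref{Lipschitz Estimate} applied on $S^2$ with $B$ any ball in the northern hemisphere: every $H_n$, every $\bar{H}_n$, and every rescaled Hamiltonian $H_n^s(t,x) = sH_n(st,x)$ vanishes on $B$. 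Since $\phi^t_{H_n^s}(x) = \phi^{st}_{H_n}(x)$, a direct comparison gives $d_{C^0}^{path}(\phi^t_{H_n^s}, \phi^t_{H_m^s}) \leq d_{C^0}^{path}(\phi^t_{H_n}, \phi^t_{H_m})$ uniformly in $s \in [0,1]$, so Theorem \ref{Lipschitz Estimate} yields
$$ \sup_{s \in [0,1]} |\rho_{H_n}(s) - \rho_{H_m}(s)| \leq C \, d_{C^0}^{path}(\phi^t_{H_n}, \phi^t_{H_m}) \to 0 \text{ as } n,m \to \infty. $$
The same estimate applied to $\bar{H}_n$ shows that $\rho_{\bar{H}_n}$ is also Cauchy in $C^0([0,1])$. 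The uniform limits $\rho$ and $\bar{\rho}$ are continuous, so $\phi^t$ meets Definition \ref{Spectral Hamiltonian Paths} and $\phi = \phi^1 \in Hameo_{sp}(D^2)$.

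The principal obstacle I anticipate is bookkeeping: one must smoothly concatenate the $\alpha_i^t$ (using reparametrizations with vanishing derivatives at the seams) so that each $H_n$ is a genuine smooth Hamiltonian supported in the interior of $D^2$, and simultaneously ensure that the $C^0$-distances between consecutive paths $\phi^t_{H_n}$ eventually lie below the threshold $\delta$ of Theorem \ref{Lipschitz Estimate}. Once these geometric details are settled, the spectral estimate converts $C^0$-convergence of paths into uniform convergence of the spectral wavefronts essentially for free.
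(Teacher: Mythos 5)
Your overall strategy coincides with the paper's at the decisive step: both arguments work on $S^2$ with $B$ a ball in the northern hemisphere, observe that $\phi^t_{H^s}$ is a time-reparametrization of $\phi^t_H$ so that the path distance does not increase, and apply Theorem~\ref{Lipschitz Estimate} to conclude that the wavefront functions $\rho_{H_n}$ and $\rho_{\bar{H}_n}$ are uniformly Cauchy. Your construction of the approximating sequence is a mild variant: the paper picks a path in $Sympeo_0(D^2)$ from $Id$ to $\phi$ and approximates it uniformly by smooth Hamiltonian paths via the smoothing results \cite{Oh5, Si, Mu}, whereas you build the isotopy yourself by telescoping, using Lemma~\ref{Local Path Connectedness} to join consecutive smooth approximations of $\phi$ by $C^0$-small Hamiltonian paths. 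That variant is acceptable (and arguably more self-contained), granted the identification of $Sympeo_0(D^2)$ with the $C^0$-closure of $Ham(D^2)$.

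The genuine gap is the normalization bookkeeping, and it is not cosmetic. You describe $H_n$ as ``a smooth normalized Hamiltonian on $S^2$, supported in the interior of the southern hemisphere''; these two requirements are incompatible unless $H_n \equiv 0$, since a normalized Hamiltonian has mean zero at every time. Definition~\ref{Spectral Hamiltonian Paths}, as adapted to $D^2$, requires the generating Hamiltonians to be normalized on $S^2$, hence nonzero functions of time on the northern hemisphere, and for such Hamiltonians Theorem~\ref{Lipschitz Estimate} does not apply; indeed Example~\ref{Discontinuity Example} shows its conclusion genuinely fails after the usual normalization. If instead you keep $H_n$ supported in $D^2$ so that the theorem applies, you must still reconcile this with the normalization convention in the definition. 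The paper confronts exactly this tension by replacing the Hamiltonian $F_i$ generating its approximating path with $H_i = F_i \# f_i$, where $f_i$ is time-independent, supported in a ball of diameter $\frac{1}{i}$ inside $D^2$, and chosen so that $\int_0^1 \int_{S^2} H_i \, \omega \, dt = 0$; this keeps $H_i$ supported in $D^2$, perturbs the path by at most $\frac{1}{i}$, and repairs the (integrated) normalization. Your write-up needs some such correction step, or else an explicit argument controlling the normalization terms $\int_0^s c_n(t)\,dt$ coming from passing to the normalized Hamiltonians, before the application of Theorem~\ref{Lipschitz Estimate} verifies Definition~\ref{Spectral Hamiltonian Paths}.
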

   \begin{proof}
   Recall that we are assuming that $D^2$ is embedded into $S^2$ as the southern hemisphere.
   Suppose $\phi \in Sympeo_0(D^2)$.  Take a path $\phi^t \text{ } (0 \leq t\leq 1)$ in $Sympeo_0(D^2)$ such that $\phi^0 = Id$ and $\phi^1 = \phi$.  To obtain the result we have to show that $\phi^t \in PHameo_{sp}(D^2)$.  
   
   There exist smooth Hamiltonians paths $\phi^t_{i} \in PHam(D^2)$ such that 
                  $$d_{C^0}^{path}(\phi^t, \phi^t_{i}) \to 0.$$
   The existence of this sequence follows from the fact that every area preserving homeomorphism can be approximated by smooth area preserving diffeomorphisms; see \cite{Oh5, Si, Mu}. 
   
     Let $F_i$ denote the unique Hamiltonian supported in $D^2$ which generates $\phi^t_i$, i.e., $\phi_i^t = \phi^t_{F_i}$.  Now pick time independent Hamiltonians $f_i$ supported in balls of diameter $\frac{1}{i}$ contained in $D^2$ such that 
   $$\int_{S^2}{f_i} \omega = - \int_0^1 \int_{S^2}{F_i(t, \cdot)\omega} \; dt.$$
   
   Let $H_i = F_i \# f_i$.  Note that $H_i$ is supported in $D^2$ and $\int_0^1 \int_{S^2}{H_i(t, \cdot)\omega} \; dt = 0$.  Furthermore, because support of $f_i$ is contained in a ball of radius $\frac{1}{i}$ we have 
   $$d_{C^0}^{path} (\phi^t_{F_i}, \phi^t_{H_i}) \leq \frac{1}{i},$$
   and thus $$d_{C^0}^{path}(\phi^t, \phi^t_{H_i}) \to 0.$$
   
   It remains to show that the sequences of spectral wavefront functions $\rho_{H_i}$ and $\rho_{\bar{H_i}}$ have $C^0$ limits.
   To do so we will show that these sequences are Cauchy.
   
   For any small $\delta >0$, we have $d_{C^0}^{path}(\phi^t_{H_i}, \phi^t_{H_j}) \leq \delta$ for large enough $i, j$.  Because the $H_i$ and $H_j$ vanish on the northern hemisphere of $S^2$ we can apply Theorem \ref{Lipschitz Estimate} and get
        $$| \rho_{H_i}(1)  - \rho_{H_j}(1) | = | \rho(H_i;1) - \rho(H_j;1) | \leq C \, d_{C^0}^{path}(\phi^t_{H_i}, \phi^t_{H_j}) \leq C \; \delta. $$
   
   Similarly, we get that $$| \rho_{H_i}(s)  - \rho_{H_j}(s) | \leq C \; \delta, $$
   for any $s\in [0,1].$
   
   This shows that the sequence $\rho_{H_i}$ is Cauchy.  The same reasoning as above yields that $\rho_{\bar{H_i}}$ is Cauchy.
   This finishes the proof.
   \end{proof}
   
   \subsection{Failure of Uniqueness for wavefront functions}
     
   As in the case of topological Hamiltonian paths (see \cite{MO} for a definition), uniqueness issues turn out to be quite interesting in the case of spectral Hamiltonian paths.  In the case of topological Hamiltonian paths, it has been shown that (see \cite{BS, V3}) if
   $$d_{C^0}^{path}(Id, \phi^t_{H_i}) \to 0 \text{, and
   if } \exists \; H \text{ such that } \Vert H - H_i \Vert_{L^{(1, \infty)}} \to 0,$$ 
   then $H=0$.  
   The following theorem demonstrates that in the case of spectral Hamiltonian paths uniqueness of wave front functions fails, spectacularly.
   
   \begin{thm}\label{Failure of Uniqueness}
   Let $g : [0,1] \rightarrow \mathbb{R}$ denote any continuous function such that $g(0) = 0$.  Then, on any closed symplectic manifold $M$ there exists a sequence $\phi^t_{H_i} \in PHam(M)$ such that $$\displaystyle \lim_{i \to \infty} d_{C^0}^{path}(\phi^t_{H_i}, Id) = 0 \text{, and }   (C^0) \lim_{i \to \infty} \rho_{H_i} = g.$$
   \end{thm}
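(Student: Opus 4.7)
The plan is to generalize the construction of Example \ref{Discontinuity Example}. That example showed that by normalizing a Hamiltonian supported in a shrinking ball one can force the value $\rho(H;1)$ to approach any prescribed number, while the flow $C^0$-converges to the identity. Here one must control the entire wavefront $s \mapsto \rho(H_i^s;1)$ uniformly in $s \in [0,1]$, not only its value at $s=1$; the new idea is to normalize by a \emph{time-dependent} function whose antiderivative traces out $g$.

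First, approximate $g$ uniformly on $[0,1]$ by smooth functions $G_i$ with $G_i(0)=0$; this is possible by e.g.\ mollification (or Bernstein polynomials). Fix a Darboux chart on $M$, and for each $i$ choose a smooth bump $\chi_i : M \to \mathbb{R}$ supported in a ball $B_i$ of diameter $\leq 1/i$ inside the chart, with $\int_M \chi_i\,\omega^n > 0$. Set
$$c_i(t) = \frac{\mathrm{vol}(M)}{\int_M \chi_i\,\omega^n}\, G_i'(t), \qquad H_i(t,x) = c_i(t)\chi_i(x) - G_i'(t).$$
Then $H_i$ is normalized, and since it differs from $c_i(t)\chi_i(x)$ by a function of $t$ alone, the two Hamiltonians generate the same flow, which is supported inside $B_i$. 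Consequently $d_{C^0}^{path}(\mathrm{Id}, \phi^t_{H_i}) \leq 1/i \to 0$.

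For the wavefront, $H_i^s(t,x) = s H_i(st,x) = s c_i(st)\chi_i(x) - s G_i'(st)$. Property (\ref{Shift}) of Proposition \ref{Properties of Spectral Invariants} applied with $r(t) = -sG_i'(st)$ yields
$$\rho_{H_i}(s) \;=\; \rho\bigl(s c_i(st)\chi_i(x);\,1\bigr) + \int_0^1 s G_i'(st)\,dt \;=\; \rho\bigl(s c_i(st)\chi_i(x);\,1\bigr) + G_i(s),$$
where in the last step we substitute $u = st$ and use $G_i(0)=0$. For every $s \in [0,1]$ the Hamiltonian $(t,x) \mapsto s c_i(st)\chi_i(x)$ is supported in $B_i$, so the displacement-energy bound $|\rho(K;1)| \leq e(\mathrm{Supp}\, K)$ from Corollary 3.3 of \cite{U1} (used already in Example \ref{Discontinuity Example}) gives $|\rho(s c_i(st)\chi_i;\,1)| \leq e(B_i) \to 0$ uniformly in $s$. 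Combined with $G_i \to g$ uniformly, this proves $\rho_{H_i} \to g$ in the $C^0$-topology, as required.

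The proof itself is essentially routine once the construction is written down; the main (conceptual) obstacle is recognizing that the freedom to choose the normalizing constant in Example \ref{Discontinuity Example} becomes, in the present setting, the freedom to choose an arbitrary smooth function $G_i'(t)$, and that after the parameter rescaling $(t,x) \mapsto (st,x)$ inherent in the definition of $\rho_H(s)$, this time-function integrates up to $G_i(s)$, which can be arranged to approximate any continuous $g$ vanishing at $0$.
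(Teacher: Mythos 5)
Your proposal is correct and follows essentially the same route as the paper: a Hamiltonian supported in a shrinking ball (so the flow is $C^0$-small and the unnormalized spectral term is killed by Usher's displacement-energy bound, exactly as in Example \ref{Discontinuity Example}), normalized by a function of time whose reparametrized integral $\int_0^1 sG_i'(st)\,dt$ reproduces $G_i(s)$ via the shift property. The only difference is presentational — you fold the smooth approximation of $g$ into the sequence from the start, whereas the paper first treats differentiable $g$ with a rescaled fixed profile $i^{2n}g'(t)K(ix,iy)$ and then handles general $g$ by a separate diagonal argument.
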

   \begin{proof}
   First, we assume that $g$ is differentiable.
   Let $K$ be a smooth, time-independent Hamiltonian, supported inside a Darboux chart $(U,x,y)$ such that $\int_{M}K=1.$ 
   Let $F(t,x,y) = g'(t)K(x,y),$ where $g'(t)$ is the first derivative of $g$.  Let $F_i(t,x,y) = i^{2n}F(t, ix, iy),$ where $2n$ is the dimension of the manifold.  Note that $supp(F_i)$ shrinks to a point and thus $d_{C^0}^{path}(\phi^t_{F_i}, Id) \to  0$.   Also, note that $$\int_{M} F_i(t,.) \omega^n = g'(t).$$ 
   
   Corollary 3.3 in \cite{U1} states that $\rho(F_i;1) \leq e(Supp(F_i),M)$, where $e(Supp(F_i);M)$ is the displacement energy of support of $F_i$.  The above inequality combined with the fact that $0 \leq \rho(F_i;1) + \rho(\bar{F_i};1)$, implies $\left| \rho(F_i;1) \right| \leq e(supp(F_i),M)$. Similarly, one can show that 
   $$\left| \rho(\bar{F_i};1)\right| \leq e(supp(F_i),M) .$$ 
   
   Since the sets $supp(F_i)$ shrink to a point, $e(supp(F_i);M)$ converges to $0$.   Thus, $\left| \rho(F_i;1) \right| $ and $\left| \rho(\bar{F_i};1) \right|$ converge to zero.  The same reasoning as above also implies that the spectral wavefront functions $\rho_{F_i}(s)$ and $\rho_{\bar{F_i}}(s)$ converge to $0$ uniformly.
   
   Let $H_i$ be the Hamiltonian obtained by normalizing $F_i$, i.e., $H_i(t,.) = F_i(t,.) - g'(t)$.  Then, 
     $$(C^0)\lim_{i\to \infty} \rho_{H_i}(s) =  (C^0)\lim_{i\to \infty}\rho(sH_i(st,.);1)$$
     $$ = (C^0)\lim_{i\to \infty}\rho(sF_i(st,.) - sg'(st);1)$$  
     $$ =  (C^0)\lim_{i\to \infty}\rho(sF_i(st,.);1) + \int_{0}^{1}{sg'(st) dt}$$
     $$ = (C^0)\lim_{i\to \infty}\rho_{F_i}(s) + \int_{0}^{s}{g'(t) dt} = g(s) - g(0) = g(s).$$
     
     Similarly, we see that $$(C^0)\lim_{i\to \infty} \rho_{\bar{H_i}}(s) = -g(s).$$
     
     If $g$ is not differentiable, pick a sequence of differentiable functions $g_i$ such that $\Vert g - g_i \Vert_{C^0} \leq \frac{1}{2i}$.  By the above, we can find smooth Hamiltonians $H_i$ such that:
     $$d_{C^0}^{path}(\phi^t_{H_i}, Id) \leq \frac{1}{i}, \text{   } \Vert \rho_{H_i}(s) - g_i(s)\Vert_{C^0} \leq \frac{1}{2i}, \text{ and }
     \Vert \rho_{\bar{H_i}}(s) + g_i(s)\Vert_{C^0} \leq \frac{1}{2i}.$$
     
      We, therefore, conclude that $$\phi^t_{H_i} \xrightarrow{C^0} Id,\text{   }\rho_{H_i} \xrightarrow{C^0} g \text{,   and }\rho_{\bar{H_i}} \xrightarrow{C^0} -g.$$
   
   \end{proof}
   
   Despite the above failure of uniqueness, we will next show that, in the case of surfaces, this failure is not as bad as it looks on the surface.  It would be very interesting to see if this result, which implies the $C^0$-continuity of the spectral norm, holds on general symplectic manifolds.
   
   \begin{prop}
   Suppose that $\Sigma$ is a surface and that $\phi^t_{H_i}$ is a sequence in $PHam(\Sigma)$ which converges uniformly, in time and space, to the identity.  Then, the sum of the spectral wave front functions of $H_i$ and $\bar{H_i}$, $\rho_{H_i} + \rho_{\bar{H_i}}$, converges uniformly to zero.
   \end{prop}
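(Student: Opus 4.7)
The plan is to reduce the statement directly to Theorem \ref{Continuity of Gamma}. The key algebraic observation is that, by definition of the spectral wave front function and of the spectral length $\gamma$,
\[
\rho_{H_i}(s) + \rho_{\bar{H_i}}(s) \;=\; \rho(H_i^s;1) + \rho\bigl((\bar{H_i})^s;1\bigr),
\]
and one checks from the formulas $H_i^s(t,x)=sH_i(st,x)$ and $\bar{H_i}(t,x)=-H_i(t,\phi^t_{H_i}(x))$ that the Hamiltonian $(\bar{H_i})^s$ coincides with $\overline{H_i^s}$: both are normalized Hamiltonians whose flow at time $t$ is $\phi^{-st}_{H_i}$. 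Hence
\[
\rho_{H_i}(s) + \rho_{\bar{H_i}}(s) \;=\; \gamma(H_i^s).
\]

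With this identification the task reduces to showing that $\gamma(H_i^s)\to 0$ uniformly in $s\in[0,1]$. The path generated by $H_i^s$ is $t\mapsto \phi^{st}_{H_i}$, which is a reparametrized piece of the path $\phi^t_{H_i}$, so
\[
d_{C^0}^{\,path}(Id,\phi^t_{H_i^s}) \;=\; \max_{t,x}\,d(x,\phi^{st}_{H_i}(x)) \;\leq\; d_{C^0}^{\,path}(Id,\phi^t_{H_i}),
\]
and similarly $d_{C^0}(Id,\phi^1_{H_i^s})=d_{C^0}(Id,\phi^s_{H_i})\leq d_{C^0}^{\,path}(Id,\phi^t_{H_i})$. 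Since by hypothesis $d_{C^0}^{\,path}(Id,\phi^t_{H_i})\to 0$, both quantities tend to zero uniformly in $s$.

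For $i$ large enough that $d_{C^0}^{\,path}(Id,\phi^t_{H_i})\leq \delta$ (the constant supplied by Theorem \ref{Continuity of Gamma}), this hypothesis is satisfied by $H_i^s$ for every $s\in[0,1]$ simultaneously. Applying Theorem \ref{Continuity of Gamma} yields
\[
\gamma(H_i^s) \;\leq\; C\,\bigl(d_{C^0}(Id,\phi^s_{H_i})\bigr)^{2^{-2g-1}} \;\leq\; C\,\bigl(d_{C^0}^{\,path}(Id,\phi^t_{H_i})\bigr)^{2^{-2g-1}},
\]
and the right-hand side is independent of $s$ and tends to zero. This proves uniform convergence of $\rho_{H_i}+\rho_{\bar{H_i}}$ to $0$.

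There is essentially no obstacle once the identification $\gamma(H_i^s)=\rho_{H_i}(s)+\rho_{\bar{H_i}}(s)$ is observed; the only point requiring minor care is verifying $(\bar{H_i})^s=\overline{H_i^s}$ and confirming that the path norm of the reparametrized Hamiltonian is dominated by that of the original, both of which are formal. The substantive content is entirely packaged inside Theorem \ref{Continuity of Gamma}.
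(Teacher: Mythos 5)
Your proposal is correct and follows essentially the same route as the paper: identify $\rho_{H_i}(s)+\rho_{\bar{H_i}}(s)=\gamma(H_i^s)$, note that the flow of $H_i^s$ is the reparametrized path $\phi^{st}_{H_i}$ whose $C^0$-path distance from the identity is dominated by that of $\phi^t_{H_i}$, and apply Theorem \ref{Continuity of Gamma} to get a bound independent of $s$. Your explicit check that $(\bar{H_i})^s=\overline{H_i^s}$ is a detail the paper leaves implicit, but the argument is the same.
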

    \begin{proof}
    Let $\gamma_{H_i}(s)= \rho_{H_i}(s) + \rho_{\bar{H_i}}(s)$.  Note that,
           $\gamma_{H_i}(s) = \gamma(H_i^s).$
           
    Recall that the flow of the Hamiltonian $H_i^s$, where $s$ is fixed, is the path $\phi^{st}_{H_i} \text{ } (0 \leq t \leq 1)$, which converges uniformly to the identity.  Therefore, by Theorem \ref{Continuity of Gamma}, for $i$ large enough we have
                  $$\gamma(H_i^s) \leq C \, (d_{C^0}^{path}(Id, \phi^t_{H_i^s}))^{2^{-2g-1}} \leq C \, (d_{C^0}^{path}(Id, \phi^t_{H_i}))^{2^{-2g-1}}. $$
    The result follows from the above inequality.

    \end{proof}

\end{document}